\documentclass{amsart}

\usepackage[
english]{babel}

\usepackage{amsmath,amsfonts,amssymb}
\usepackage{enumerate}
\usepackage{mathtools}
\usepackage[latin1]{inputenc}
\usepackage[colorlinks=true]{hyperref}
\hypersetup{urlcolor=blue, citecolor=blue}

\usepackage{color,tikz}

\newtheorem{thm}{Theorem}[section]
\newtheorem{lem}[thm]{Lemma}
\newtheorem{prop}[thm]{Proposition}

\newtheorem{Def}[thm]{Definition}
\newenvironment{Proof}{\noindent {\it Proof :}}{\hfill $\square$}
\newtheorem{rem}[thm]{Remark}

\def\R{\mathbb{R}}
\def\N{\mathbb{N}}
\def\A{\mathbb{A}}
\def\M{\mathbb{M}}
\def\D{\mathbb{D}}
\def\T{\mathbb{T}}
\def\grad{\boldsymbol{\nabla}}
\def\p{\partial}
\def\s{\sigma}
\def\be{\begin{equation}}
\def\ee{\end{equation}}
\def\O{\Omega}
\def\Tt{\mathcal{T}}
\def\dt{{\Delta t}}

\def\d{{\rm d}}
\def\x{{\bf x}}
\def\y{{\bf y}}
\def\n{{\bf n}}
\def\div{{\rm div}}

\def\eps{\epsilon}
\def\1{{\bf 1}}
\def\un#1{\underline{#1}}
\def\ov#1{\overline{#1}}
\def\a{\alpha}
\def\k{\kappa}
\def\nn{\nonumber}
\def\BV{{\rm BV}}
\def\h#1{\widehat{#1}}

\def\Aa{\mathcal A}
\def\Bb{\mathcal B}
\def\Cc{\mathcal C}
\def\Dd{\mathcal D}
\def\Ee{\mathcal E}

\def\Mm{\mathcal M}
\def\Nn{\mathcal N}
\def\Oo{\mathcal O}
\def\Pp{\mathcal P}
\def\Uu{\mathcal U}
\def\Vv{\mathcal V}
\def\nn{\nonumber}
\def\bv{{\boldsymbol v}}
\def\bP{{\boldsymbol P}}
\def\bbK{{\mathbb K}}
\def\bx{{\boldsymbol x}}
\def\bphi{{\boldsymbol \varphi}}
\def\u{{\boldsymbol u}}
\def\g{{\boldsymbol g}}

\title[Nonlinear time compactness result and applications]{A nonlinear time compactness result\\
 and applications to discretization\\ of degenerate parabolic-elliptic PDEs}
\author{B. Andreianov}
\address{
Boris Andreianov ({\tt boris.andreianov@univ-fcomte.fr})
\begin{enumerate}
\item Laboratoire de Math{\'e}matiques de Besan{\c c}on,
CNRS UMR 6623 \\
Universit{\'e} de Franche-Comt{\'e} \\
16 route de Gray, 25030 Besan{\c c}on Cedex, France
\end{enumerate}
}

\author{C. Canc\`es \and A. Moussa}
\address{
Cl\'ement Canc\`es ({\tt cances@ljll.math.upmc.fr}), Ayman Moussa ({\tt moussa@ljll.math.upmc.fr})
\begin{enumerate}
\item Sorbonne Universit\'es, UPMC Univ Paris 06, UMR 7598, Laboratoire Jacques-Louis Lions, F-75005, Paris, France
\item CNRS, UMR 7598, Laboratoire Jacques-Louis Lions, F-75005, Paris, France
\end{enumerate}
}

\begin{document}

\maketitle

\begin{abstract}
We propose a discrete functional analysis result suitable
for proving compactness in the framework of fully discrete
approximations of strongly degenerate parabolic problems.
It is based on the original exploitation of a
result related to compensated compactness rather than
on a classical estimate on the space and time translates in the spirit of Simon (Ann. Mat. Pura Appl. 1987).
Our approach allows to handle various numerical
discretizations both in the space variables
and in the time variable. In particular, we can cope
quite easily  with variable time steps and with multistep
time differentiation methods like, e.g., the
backward differentiation formula of order~2 (BDF2) scheme.
We illustrate our approach by proving the
convergence of a two-point flux Finite Volume
in space and BDF2 in time approximation of the
porous medium equation.
\end{abstract}

\section{Introduction}

\smallskip
There exists a large variety of numerical strategies for discretization of evolution PDEs.
Proofs of convergence of many different numerical schemes often take the following standard
itinerary (see, e.g., \cite{EGH00}). Given a PDE, discrete equations of the scheme are rewritten under a form reminiscent of the weak formulation of the continuous problem; stability estimates are obtained, which ensure bounds in appropriate (possibly discretized) functional spaces; eventually, sufficiently strong compactness arguments permit to pass to the limit in the discrete weak formulation.

\smallskip
In many applications, including degenerate parabolic equations of various kinds, the question of strong compactness in $L^p$ spaces of sequences of approximate solutions is a cornerstone of such proofs. While ``space compactness'' is usually obtained by suitable \emph{a priori} estimates
of the discrete gradients involved in the equation, ``time compactness'' is often obtained by explicitly estimating
$L^2$ or $L^1$ time translates in the spirit of Alt and Luckhaus \cite{AL83}. This step is equation-dependent,
and has to be reproduced for each problem.
We refer to \cite{EGH00,EGHMichel} for the main ingredients of this already classical argument
used in a number of subsequent works, to \cite{AndrBendKarl10,Andr-FVCA6Proc} for some refinements,
and to \cite[\S3]{BM13} for a shortened version of the argument.

\smallskip
 The same question of time compactness often arises in existence analysis for PDEs, in the continuous framework.
 Along with the technique of \cite{AL83}, there exist several ready-to-use results that yield space-time precompactness of a sequence of (approximate) solutions $(u_n)_n$.
 They are based on the two following ingredients:\\[3pt]
 $(A)$  estimates in sufficiently narrow Bochner spaces\\
 \hspace*{12pt} ensuring uniform in $n$ bounds on space translates of the sequence $(u_n)_n$;\\[1pt]
 $(B)$ some very weak (in the space variable) estimates on the sequence $(\partial_t u_n)_n$.\\[3pt]
Then, different arguments permit to derive from $(A)$ and $(B)$ uniform in $n$ estimates of time translates of $(u_n)_n$ and conclude that $(u_n)_n$ is compact in the appropriate space (e.g., as a consequence of the Fr\'echet-Kolmogorov compactness criterion for $L^p$ spaces).
 This kind of result, in the abstract linear setting, is often called Aubin-Lions-Simon lemma \cite{Aubin,Lions,Simon}, but the version we are interested in is also related to the early nonlinear version of the argument due to Dubinskii \cite{Dubinskii} (see also recent references \cite{BarrettSuli,ChenLiu,ChenJungelLiu}) and to the more recent formulation of Ma\^{\i}tre \cite{Maitre}. Another related argument of nonlinear kind is due to Kruzhkov \cite{Kruzhkov}. Further improvements were obtained  by Amann in \cite{amann} for a refined scale of spaces (including Besov spaces for instance), and broached by Roub{\'{\i}}{\v{c}}ek in a rather general setting, see \cite{roub}. One observes that several closely related results co-exist, but the precise assumptions and conclusions of there results differ. Therefore,  one can see the combination of properties $(A)\&(B)$ as a ``time compactness principle'', which can be made precise upon choosing a suitable functional framework or a suitable form of the estimates $(A)$ and $(B)$ (as one illustration, we refer to Emmrich and Thalhammer \cite{EmmrichThalhammer} where $(B)$ is formulated as a fractional time derivative estimate).

\smallskip
 Further, discrete variants of different time compactness results have been already proved both for many concrete applications (mainly in the context of finite element or finite volume methods) and in abstract form: we refer in particular to Eymard et al. \cite{EGH00,EGHMichel} for Alt-Luckhaus kind technique for concrete applications, to Gallou\"et and Latch\'e \cite{GallouetLatche} for a discrete Aubin-Lions-Simon lemma, to Andreianov et al. \cite{ABRB11,AndrBendHubert} for a discrete Kruzhkov lemma, and to Dreher and J\"ungel \cite{DreherJungel} (see also \cite{ChenJungelLiu}) for a discrete Dubinskii argument with uniform time stepping.

 \smallskip
 The new result we are intended to present is based upon the technique of \cite{Moussa14} which carries on rather easily to the discrete case. Our result is formulated as the ready-to-use Theorem~\ref{thm:main} applicable to a large variety of numerical schemes (including variable time step and multi-step methods in time). It can be applied to a wide variety of strongly degenerate nonlinear parabolic equations.

  \subsection{Compactness arguments for degenerate parabolic PDEs}

In what follows,  $\O$ is a bounded open subset of $\R^d$, while $T>0$ is an arbitrary finite time horizon.
The cylinder $\O\times(0,T)$ is denoted by $Q_T$.

 Depending on the type of degeneracy of the underlying PDE, some of the aforementioned time compactness lemmas or techniques can be applied and the others fail to fit the structure of nonlinearities involved in the equation. To be specific, consider as the fundamental example the family of degenerate parabolic equations
 \begin{equation}\label{eq:General-DegenParabolic}
   \p_t u = \Delta v + \text{LOT}, \;\; u\in\beta(v)
 \end{equation}
where $\beta\subset \R^2$ is a maximal monotone graph and $\text{LOT}$ represent some lower-order terms, e.g., of convection and reaction kind. Definition and basic properties of monotone graphs are recalled in \S\ref{subsec:des_cont} for readers who are not familiar with this notion. The graph $\beta$ can contain vertical and horizontal segments, which leads to elliptic-parabolic and (in presence of first-order convection terms $\text{LOT}$) to parabolic-hyperbolic strong degeneracy, respectively.  In fact, most of the different time compactness arguments were developed for applications of the kind \eqref{eq:General-DegenParabolic}, with possible coupling to other equations.

  Let us assume for simplicity  that
we have a sequence of solutions to \eqref{eq:General-DegenParabolic} with uniform in $n$ control of $\{v_n\}_n$ in $L^2(0,T;H^1_0(\Omega))$ and of $\{\p_t u_n\}_n$ in $L^2(0,T;H^{-1}(\Omega))$.

\smallskip
\noindent$\bullet$ Firstly, the Aubin-Lions lemma \cite{Aubin,Lions} can be applied in this situation, provided the graph $\beta$
is bi-Lipschitz, i.e., if it is Lipschitz continuous with Lipschitz continuous inverse. This is the non-degenerate, uniformly parabolic case. Even in the power case $\beta(v)=\text{sign}(v) |v|^{\alpha}$, the degeneracy at zero ($\alpha\in(0,1)$, the porous medium equation) or the singularity at zero ($\alpha>1$, the fast diffusion equation) do not permit to apply the Aubin-Simon lemma.
The classical generalization by Simon \cite{Simon} of the Aubin-Lions lemma gives more precise compactness information under less restrictive estimates of $\{v_n\}_n$ and of $\{\p_t u_n\}_n$, but it does not help to overcome
the possible degeneracy of $\beta$. Degenerate cases require arguments of nonlinear kind.

 \smallskip
\noindent$\bullet$
 The elliptic-parabolic degenerate case (i.e., the case where $\beta$ is a continuous map) can be handled using Ma\^{\i}tre's lemma~\cite{Maitre} or the Kruzhkov's one~\cite{Kruzhkov,AndrGutnicWittbold,Andr-FVCA6Proc}. Application of each of these tools requires some additional assumptions such as the uniform continuity of $\beta$ or the boundedness of $\|v_n\|_\infty$.
 The difficulty in application of Ma\^{\i}tre's lemma~\cite{Maitre} consists in the choice of appropriate functional setting according to the behavior of the nonlinearity $\beta$. It may require restrictions on the behavior of $\beta$ at infinity and introduction of special functional spaces, e.g. of the Orlicz kind.

\smallskip
\noindent$\bullet$ In what concerns discrete versions of the above general lemmas,
the proof presented in~\cite{Kruzhkov} is particularly simple to adapt to the discretized setting,
indeed, it uses only the most natural $L^1$ norm for discrete solutions.
Adaptation to the discrete setting of the linear compactness lemmas of \cite{Aubin,Lions,Simon} is presented in \cite{GallouetLatche}; it requires the construction of the adequate discrete spaces and an ingenious reformulation of the assumptions. It is feasible that also the nonlinear compactness lemma of \cite{Maitre} can be adapted to discretized setting using the approach of \cite{GallouetLatche}, but this question is beyond our scope.

\smallskip
\noindent$\bullet$
Further, the elliptic-parabolic case (i.e., the case where $\beta$ is a continuous map) and also the parabolic-hyperbolic
case (i.e., the case where $\beta^{-1}$ is a continuous map) can be dealt with using the Alt-Luckhaus approach.
It can be roughly described as estimating the integral over $Q_T$ of the product
\begin{equation}\label{eq:AL-estimatedquantity}
\Bigl(u_n(\x,t+\tau)-u_n(\x,t)\Bigr)\Bigl(v_n(\x,t+\tau)-v_n(\x,t) \Bigr)
\end{equation}
by a uniformly vanishing, as $\tau\to 0$, modulus of continuity.
Calculations leading to such estimate of \eqref{eq:AL-estimatedquantity} use the variational structure of the equations and the Fubini theorem; although they are standard, they appear to be equation-dependent and (in the discretized
setting) scheme-dependent.

Starting from \eqref{eq:AL-estimatedquantity} and estimates of $\nabla v_n$, conclusions about relative compactness in $L^2(Q_T)$ of $\{u_n\}_n$ (in the elliptic-parabolic case), respectively of $\{v_n\}_n$ (in the parabolic-hyperbolic case) can be derived,  provided $\beta$ (respectively, $\beta^{-1}$) is assumed to be Lipschitz continuous. Mere uniform continuity of $\beta$ (respectively, $\beta^{-1}$) is enough for $L^1$ compactness, see \cite{AndrBendKarl10}.

Adaptation of estimates of the kind \eqref{eq:AL-estimatedquantity} to the discrete setting became a classical trend, starting from \cite{EGH00}. Yet the use of Fubini argument in the time-discretized setting brings lengthy technicalities, that are reproduced in an impressive number of papers dealing with convergence of finite volume approximations to various parabolic problems.
Therefore, our goal is to provide a black-box avoiding these computations.

\subsection{Description of our approach in the continuous setting.}\label{subsec:des_cont}
To give an idea, in this paragraph we argue on equations \eqref{eq:General-DegenParabolic}.
Let $(u_n,v_n)$ be (approximate) solutions of the problem.
Given the structure of the equation, one can require that $(v_n)_n$
is controlled in $L^2(0,T;H^1(\Omega))$ and $(\p_t u_n)_n$ is controlled in the dual space.
In the case $\beta=\text{Id}$, i.e. $u_n=v_n$ (with an immediate extension to
bi-Lipschitz $\beta$), this kind of assumptions is the basis of the Aubin-Lions-Simon
kind lemmas; they are exploited for estimating space and time translates of the solutions,
in order to apply the Fr\'echet-Kolmogorov compactness argument.
The idea of the compactness lemma we prove in this paper consists in justifying,
under the same kind of assumptions, the relation
\begin{equation}\label{eq:LemmdeBase}
 u_n v_n \rightharpoonup uv \;\;\text{in $\Dd'(Q_T)$}
\end{equation}
(up to extraction of a subsequence), where $u$,$v$ are the respective weak limits of $u_n$,$v_n$
(say, in $L^2(Q_T)$). Then we exploit this weak convergence property thanks to the monotonicity of $\beta$.

Motivated by application in porous media flows (see e.g. \S\ref{ssec:Richards}), we introduce a sequence
$\left(\omega_n\right)_n$ of $L^\infty$ weights with $L^\infty$ inverse converging almost everywhere to some
limit $\omega$. More precisely, we assume that there exists $\un \omega, \ov \omega >0$ such that
\be\label{eq:poids-conv}
\un \omega \le \omega_n(\x) \le \ov \omega \quad\text{ and }
\quad \omega_n(\x) \underset{n\to\infty}\longrightarrow \omega(\x) \quad \text{ for a.e. } \x \in \O.
\ee

Let us first state a slightly modified version of the key technical tool \cite[Prop.\,1]{Moussa14}. This result is reminiscent of the celebrated framework of Tartar-Murat (see \cite{tartar,murat}), even though to our knowledge, there is no direct relation between the general theory of compensated compactness and the one of \cite{Moussa14}. To simplify the statement, we exclude the cases $q=1$ and $q=\infty$, where weak convergence should be replaced by the weak-* convergence (for $(u_n)_n$, if $q=\infty$; for $(v_n)_n$, if $q=1$).

\begin{prop}\label{th:Moussa-compcomp}
Let $\left(\omega_n\right)_{n\ge 1} \subset L^\infty(\O)$ be such that~\eqref{eq:poids-conv} holds.
Let $q\in(1,\infty)$ and $p\in[1,\infty)$, and let $\a > \frac{pd}{p+d}$.
Assume that $(u_n)_n$, $(v_n)_n$ are two sequences of measurable functions on $Q_T$ such that
$(u_n)_n$ is bounded in $L^q((0,T);W^{1,\a}(\O))$ and $(v_n)_n$ is bounded in $L^{q'}((0,T);L^{p'}(\O))$.
Up to extraction of a subsequence, we can assume that $u_n$ and $v_n$ weakly converge to $u$, $v$ respectively in $L^q((0,T);L^{p}(\O))$ and $L^{q'}((0,T);L^{p'}(\O))$.
Assume that, in addition, $(v_n)_n$ verifies the following uniform ``weighted'' weak estimate:
\begin{equation}\label{eq:dual-estimate}
\int_0^T\!\!\int_\O \omega_nv_n\partial_t \varphi \leq C\| \nabla_x \varphi \|_{\infty}, \qquad \forall \varphi\in \mathcal D(Q_T).
\end{equation}
Then
\begin{equation}\label{eq:compcomp}
\int_0^T\!\!\int_\O 
\omega_n u_n v_n \varphi \underset{n\to\infty}\longrightarrow
\int_0^T\!\!\int_\O 
\omega u v \varphi, \qquad   \forall\,\varphi\in \mathcal D(Q_T).
\end{equation}
\end{prop}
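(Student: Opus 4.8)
The plan is to deduce \eqref{eq:compcomp} from a weak--strong convergence argument, after a double mollification of $(u_n)_n$: in the space variables, to cash in the gradient bound, and in the time variable, to turn the very weak estimate \eqref{eq:dual-estimate} into something that can be tested against $(u_n)_n$. First, set $w_n:=\omega_nv_n$. From \eqref{eq:poids-conv} and dominated convergence, $\omega_n\psi\to\omega\psi$ strongly in $L^q((0,T);L^p(\O))$ for every $\psi$ in that space, so $\iint_{Q_T}w_n\psi=\iint_{Q_T}v_n(\omega_n\psi)\to\iint_{Q_T}v(\omega\psi)$; hence $(w_n)_n$ is bounded in $L^{q'}((0,T);L^{p'}(\O))$ and $w_n\rightharpoonup w:=\omega v$ weakly there, while \eqref{eq:dual-estimate} holds for $w_n$ by assumption. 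It therefore suffices to prove $\iint_{Q_T}w_nu_n\varphi\to\iint_{Q_T}wu\varphi$ for each fixed $\varphi\in\Dd(Q_T)$.

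Fix such a $\varphi$, with $\mathrm{supp}_{\x}\varphi\subset K\subset\subset\O$, and let $\rho_\delta$, $\mu_\eps$ be standard mollifiers in $\x\in\R^d$ and in $t\in\R$; write $u_n^\delta:=u_n*_x\rho_\delta$ and $u_n^{\delta,\eps}:=u_n^\delta*_t\mu_\eps$, well defined near $\mathrm{supp}\,\varphi$ once $\delta,\eps$ are small. We decompose
\begin{multline*}
\iint_{Q_T}w_nu_n\varphi-\iint_{Q_T}wu\varphi=\iint_{Q_T}w_n(u_n-u_n^\delta)\varphi\\
+\iint_{Q_T}w_n(u_n^\delta-u_n^{\delta,\eps})\varphi+\Bigl(\iint_{Q_T}w_nu_n^{\delta,\eps}\varphi-\iint_{Q_T}wu\varphi\Bigr)
\end{multline*}
and handle the three terms by first choosing $\delta$ small, then $\eps$ small, then letting $n\to\infty$. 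For the first term, the interior mollification estimate gives $\|u_n-u_n^\delta\|_{L^q((0,T);L^\a(K))}\le C\,\delta\,\|u_n\|_{L^q((0,T);W^{1,\a}(\O))}\le C\delta$ uniformly in $n$; since $\a>\tfrac{pd}{p+d}$ forces the interior Sobolev exponent $\a^\ast$ to satisfy $\a^\ast>p$, the sequences $(u_n)_n,(u_n^\delta)_n$ are bounded in $L^q((0,T);L^{\a^\ast}(K))$, and interpolating between $L^\a$ and $L^{\a^\ast}$ yields $\sup_n\|u_n-u_n^\delta\|_{L^q((0,T);L^p(K))}\to0$ as $\delta\to0$ (the case $\a\ge p$ being direct); by Hölder's inequality and the bound on $(w_n)_n$ the first term is then arbitrarily small uniformly in $n$. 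For the third term, for fixed $\delta,\eps$ the sequence $(u_n^{\delta,\eps})_n$ is bounded in $C^1$ on compact subsets of $Q_T$ (mollification by smooth kernels of the $L^1(Q_T)$-bounded sequence $(u_n)_n$), hence relatively compact in $C^0$ there; its limit being necessarily $u^{\delta,\eps}$, we get $u_n^{\delta,\eps}\varphi\to u^{\delta,\eps}\varphi$ strongly in $L^q((0,T);L^p(\O))$, so by weak--strong convergence the third term tends to $\iint_{Q_T}w(u^{\delta,\eps}-u)\varphi$, which vanishes as first $\eps\to0$, then $\delta\to0$.

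The crux is the middle term, and this is the only place \eqref{eq:dual-estimate} is used. Introduce the time-primitive $U_n^\delta(\x,t):=\int_0^tu_n^\delta(\x,r)\,\d r$, so that $u_n^\delta-u_n^{\delta,\eps}=\p_t(U_n^\delta-U_n^\delta*_t\mu_\eps)$; with $\Phi_n:=(U_n^\delta-U_n^\delta*_t\mu_\eps)\varphi$ one has $(u_n^\delta-u_n^{\delta,\eps})\varphi=\p_t\Phi_n-(U_n^\delta-U_n^\delta*_t\mu_\eps)\p_t\varphi$, hence
\begin{multline*}
\iint_{Q_T}w_n(u_n^\delta-u_n^{\delta,\eps})\varphi\\
=\iint_{Q_T}w_n\,\p_t\Phi_n-\iint_{Q_T}w_n(U_n^\delta-U_n^\delta*_t\mu_\eps)\p_t\varphi.
\end{multline*}
Since $U_n^\delta$ is smooth in $\x$ and absolutely continuous in $t$, \eqref{eq:dual-estimate} extends to $\Phi_n$ by time-mollification and density, giving $|\iint_{Q_T}w_n\,\p_t\Phi_n|\le C\|\nabla_x\Phi_n\|_\infty$. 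The quantitative point is that both $\|U_n^\delta-U_n^\delta*_t\mu_\eps\|_{L^\infty(K\times[0,T])}$ and $\|\nabla_x(U_n^\delta-U_n^\delta*_t\mu_\eps)\|_{L^\infty(K\times[0,T])}$ are bounded by $\int\mu_\eps(s)\int_{t-s}^t\|u_n^\delta(\cdot,r)\|_{W^{1,\infty}(K)}\,\d r\,\d s$, and $\|u_n^\delta(\cdot,r)\|_{W^{1,\infty}(K)}\le C_\delta\|u_n(\cdot,r)\|_{L^\a}$ with $(u_n)_n$ bounded in $L^q$ in time, so Hölder in $r$ gives the bound $C_\delta\,\eps^{1/q'}$, uniformly in $n$. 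Consequently both terms above are $\le C_{\delta,\varphi}\,\eps^{1/q'}$ uniformly in $n$, and the middle term is arbitrarily small for $\eps$ small. Collecting the three estimates and sending the parameters to zero in the stated order proves \eqref{eq:compcomp}.

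The main obstacle is exactly this middle step: \eqref{eq:dual-estimate} is far too weak to be tested against $u_n^\delta\varphi$ directly, since that function, though smooth in $\x$, is merely $L^q$ in $t$ with no uniform $W^{1,\infty}$ control; the device of passing to the time-primitive $U_n^\delta$, which converts a time derivative into a spatial sup-norm made small by the mollification estimate, is what lets \eqref{eq:dual-estimate} bite. A secondary technical point is the exponent bookkeeping in the space-mollification step, where the strict inequality $\a>\frac{pd}{p+d}$, equivalently $\a^\ast>p$, is precisely what permits interpolating $u_n-u_n^\delta$ down to $L^p$ in the space variables.
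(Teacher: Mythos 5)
Your proof is correct, but it follows a genuinely different route from the one the paper relies on. The paper does not reprove Proposition~\ref{th:Moussa-compcomp} in the continuous setting: it quotes \cite[Prop.~1]{Moussa14} and observes, exactly as in your opening reduction, that the weighted case follows from $\omega_n\equiv 1$ upon replacing $v_n$ by $\omega_n v_n$. The argument it actually writes out (for the discrete analogue, Proposition~\ref{thm:compcomp-disc}, which mimics \cite[Lemma~3.1]{Moussa14}) mollifies the \emph{other} factor, and only in space: it sets $z_\ell=(\omega_n v_n)*\rho_\ell$, splits the error into four terms $R_1,\dots,R_4$, controls the commutator term through the space translates of $u_n$ (your Sobolev/interpolation step for $u_n-u_n^\delta$ plays exactly this role), and --- this is the essential difference --- extracts \emph{strong} compactness of $(z_\ell)_n$ for fixed $\ell$ from a $\BV(Q_T)$ bound whose time part comes from testing \eqref{eq:dual-estimate} against $\psi*\rho_\ell$; the conclusion is then a weak--strong pairing of $u_n$ against the strongly convergent $z_\ell$. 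You instead mollify $u_n$ in space \emph{and} time, dispense with any compactness argument on the $v$-side, and spend \eqref{eq:dual-estimate} once, quantitatively, on the test function $\Phi_n=(U_n^\delta-U_n^\delta*_t\mu_\eps)\varphi$ built from the time primitive of $u_n^\delta$; the explicit rate $C_\delta\,\eps^{1/q'}$ replaces the Fr\'echet--Kolmogorov/$\BV$ step entirely. Both mechanisms are sound. What the paper's version buys is that it transfers verbatim to the discrete setting, where the $u$-side only satisfies a space-translate estimate $({\bf A}_\x1)$ rather than a genuine $W^{1,\a}$ bound and where time translates of piecewise-constant reconstructions are awkward; what yours buys is a shorter, fully quantitative argument with no appeal to compactness criteria, at the minor cost of the density step needed to apply \eqref{eq:dual-estimate} to the merely Lipschitz-in-time $\Phi_n$ --- a step you correctly flag and which does go through, since $\p_t\Phi_n\in L^q((0,T);L^p(\O))$ and $\|\nabla_x(\Phi_n*_t\mu_\sigma)\|_\infty\le\|\nabla_x\Phi_n\|_\infty$.
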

Observe the two main differences with  \cite[Prop.\,1]{Moussa14}. Firstly, weights $\omega_n$ are introduced, which may be useful in PDEs modeling flows in heterogeneous porous media. For Proposition \ref{th:Moussa-compcomp}, the case of general $\omega_n$ follows from the particular one $\omega_n=1$ (one can replace $v_n$ by $\omega_n v_n$), which is the one considered in \cite{Moussa14}.Nevertheless the presence of general $\omega_n\neq 1$ will be more intricate to handle in the proof of Proposition \ref{prop:Young} (see below), this is why we keep track of $\omega_n$ here. The second difference is the bound \eqref{eq:dual-estimate} which appears stronger than the one assumed in \cite{Moussa14}, in which $(v_n)_n$  is only required to be bounded in some $\BV((0,T);H^{-m}(\O))$ space, where $m$ can be as large as needed. However, in our case we want to limit our considerations to $m=1$, because we will focus on numerical approximations and the information on higher-order in space discrete derivatives can be delicate to obtain. In this respect, the assumption \eqref{eq:dual-estimate} is the weakest assumption corresponding to $m=1$.

\vspace{2mm}

Before going further, let us recall the definition and a few basic properties of maximal monotone graphs. For more details, see for instance \cite{brezis}.
\begin{Def}
A \emph{monotone graph} on $\R$ is a map $\beta$ from $\R$ to the set $\mathcal{P}(\R)$ of all subsets of $\R$ such that $(y_1-y_2)(x_1-x_2)\geq 0$ for any $(x_1,x_2)\in \R^2$ and any $(y_1,y_2)\in \beta(x_1)\times\beta(x_2)$. It is said to be \emph{maximal monotone} if it admits no non-trivial monotone extension. If $\beta$ is a monotone graph, we denote by $\beta^{-1}:\R\rightarrow\mathcal{P}(\R)$ the map defined by $y\in\beta^{-1}(x) \Leftrightarrow x \in\beta(y)$.
\end{Def}
\begin{prop}\label{prop:monograph}
Given a  monotone graph $\beta$, the following statements are equivalent:
\begin{enumerate}[(i)]
\item $\beta$ is a maximal monotone graph.
\item $\beta^{-1}$ is a maximal monotone graph.
\item for all $\lambda>0$, $(\textnormal{Id}+\lambda \beta)^{-1}:\R\to\R$ is a (single valued) non-expansive mapping.
\end{enumerate}
\end{prop}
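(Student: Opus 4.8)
The plan is to prove the equivalence of the three standard characterizations of maximal monotonicity using only elementary convexity/monotonicity arguments, centered on the resolvent $J_\lambda := (\mathrm{Id}+\lambda\beta)^{-1}$. First I would observe that $(i)\Leftrightarrow(ii)$ is essentially trivial: the defining inequality $(y_1-y_2)(x_1-x_2)\ge 0$ is symmetric under swapping the roles of $x$ and $y$, so $\beta$ is monotone iff $\beta^{-1}$ is, and a monotone extension of $\beta^{-1}$ is exactly the inverse of a monotone extension of $\beta$; hence maximality transfers. The substance of the proposition is the equivalence with $(iii)$, which I would organize as $(i)\Rightarrow(iii)$ and $(iii)\Rightarrow(i)$.

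For $(i)\Rightarrow(iii)$, fix $\lambda>0$. I would first show $J_\lambda$ is single-valued and $1$-Lipschitz \emph{on its domain} $\mathrm{Ran}(\mathrm{Id}+\lambda\beta)$: if $x_i\in J_\lambda(z_i)$, i.e. $z_i\in x_i+\lambda\beta(x_i)$, write $z_i = x_i+\lambda y_i$ with $y_i\in\beta(x_i)$; then $z_1-z_2 = (x_1-x_2)+\lambda(y_1-y_2)$, and multiplying by $(x_1-x_2)$ and using monotonicity $\lambda(y_1-y_2)(x_1-x_2)\ge 0$ gives $(z_1-z_2)(x_1-x_2)\ge (x_1-x_2)^2$, whence $|x_1-x_2|\le|z_1-z_2|$ by Cauchy–Schwarz (so in particular $z_1=z_2\Rightarrow x_1=x_2$, i.e. single-valuedness). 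The remaining and genuinely harder point is that $\mathrm{Ran}(\mathrm{Id}+\lambda\beta)=\R$, i.e. surjectivity of $\mathrm{Id}+\lambda\beta$; this is the one-dimensional case of Minty's theorem. Here I would exploit the structure of monotone graphs on $\R$: a maximal monotone graph on $\R$ has a domain that is an interval, is the (set-valued) subdifferential-type object whose "sections" are nondecreasing, its vertical parts close up the gaps in the graph, and it is unbounded appropriately; concretely one shows that $x\mapsto x+\lambda\beta(x)$ sweeps out all of $\R$ by a connectedness/IVT argument on the nondecreasing set-valued map, using maximality precisely to fill in the jumps (a jump of $\beta$ at a point $x_0$ is a vertical segment, so $\mathrm{Id}+\lambda\beta$ at $x_0$ is the full segment $[x_0+\lambda y^-, x_0+\lambda y^+]$, leaving no gaps). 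I expect \textbf{this surjectivity step to be the main obstacle}, since it is where maximality is truly used and where one must argue carefully about the topological structure of monotone graphs on the line.

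For $(iii)\Rightarrow(i)$, I would argue by contraposition: suppose $\beta$ is monotone but not maximal, so there is a point $(x_0,y_0)\notin\beta$ such that $\beta\cup\{(x_0,y_0)\}$ is still monotone. Consider $z_0 := x_0+\lambda y_0$. By hypothesis $J_\lambda$ is a single-valued non-expansive map $\R\to\R$, so $x_1:=J_\lambda(z_0)$ is well-defined, meaning $z_0\in x_1+\lambda\beta(x_1)$, i.e. $y_1:=(z_0-x_1)/\lambda\in\beta(x_1)$. Monotonicity of the extended graph at the pair $(x_0,y_0),(x_1,y_1)$ gives $(y_0-y_1)(x_0-x_1)\ge 0$; combined with $\lambda(y_0-y_1) = (z_0-x_0)-(z_0-x_1) = x_1-x_0$ we get $-(x_1-x_0)(x_1-x_0)/\lambda\ge 0$, forcing $x_1=x_0$, and then $y_1 = (z_0-x_0)/\lambda = y_0$, so $(x_0,y_0) = (x_1,y_1)\in\beta$, a contradiction. (One should double-check the borderline situation where the non-expansiveness is only assumed, not the domain being all of $\R$ — but this is exactly what distinguishes $(iii)$ from a weaker statement, and the definedness of $J_\lambda(z_0)$ is built into the hypothesis.)

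Finally I would note that by the already-established $(i)\Leftrightarrow(ii)$, the chain $(i)\Leftrightarrow(iii)$ automatically yields $(ii)\Leftrightarrow(iii)$ as well, so all three are equivalent; alternatively one can cite \cite{brezis} for the surjectivity/Minty ingredient and keep the exposition short, which given the paper's focus is probably the intended route. I would keep the write-up to the resolvent computations above and relegate the topological surjectivity claim to a reference.
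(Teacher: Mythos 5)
The paper does not actually prove this proposition: it is recalled as a standard fact with a pointer to \cite{brezis}, so there is no in-paper argument to measure yours against. Your proposal is correct and follows the standard resolvent-based route. The symmetry argument for $(i)\Leftrightarrow(ii)$, the computation showing that $(\mathrm{Id}+\lambda\beta)^{-1}$ is single-valued and $1$-Lipschitz on its domain, and the contrapositive argument for $(iii)\Rightarrow(i)$ are all complete and correct (and you are right that the definedness of $J_\lambda(z_0)$ on all of $\R$ is built into hypothesis $(iii)$, so that step needs no further justification). The one piece you defer --- surjectivity of $\mathrm{Id}+\lambda\beta$, i.e.\ the one-dimensional Minty theorem --- is indeed the crux of $(i)\Rightarrow(iii)$, and citing \cite{brezis} for it is consistent with what the authors themselves do for the whole statement. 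If you want to close it without invoking the structure theory of monotone graphs on the line, there is a short self-contained argument: suppose $z\notin\mathrm{Ran}(\mathrm{Id}+\lambda\beta)$ and look for $x^\ast$ such that the pair $\bigl(x^\ast,(z-x^\ast)/\lambda\bigr)$ can be adjoined to $\beta$ while preserving monotonicity. The compatibility condition with a given $(x,y)\in\beta$ reads $\bigl(z-x^\ast-\lambda y\bigr)\bigl(x^\ast-x\bigr)\ge 0$ and confines $x^\ast$ to the closed interval with endpoints $x$ and $z-\lambda y$; monotonicity of $\beta$ shows that the supremum of all the left endpoints of these intervals is at most the infimum of all the right endpoints, so their intersection is nonempty, and any $x^\ast$ in it yields a proper monotone extension of $\beta$, contradicting maximality. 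This replaces the topological/IVT argument you sketch and avoids having to first establish that the domain of a maximal monotone graph is an interval and that vertical segments fill the jumps.
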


The ``compensated compactness'' feature \eqref{eq:compcomp} can be exploited in particular in the following context.
\begin{prop}\label{prop:Young}
  Let $(u_n)_n$, $(v_n)_n$ be two sequences of real valued functions on $Q_T$ weakly converging to
  limits $u$, $v$ in $L^1(Q_T)$. We assume that the limits $u$ and $v$ satisfy $uv \in L^1(Q_T)$.
  Assume that in addition, \eqref{eq:compcomp} holds.
 Let $\beta$ be a maximal monotone graph with $0\in\beta(0)$.
 If for all $n$, $v_n\in \beta(u_n)$ a.e. in $Q_T$, then $v\in \beta(u)$ a.e. on $(0,T)\times\O$. Moreover, up
 to the extraction of an unlabeled subsequence,
 for almost every $(\x,t) \in Q_T$, either $u_n(\x,t) \to u(\x,t)$ or $v_n(\x,t) \to v(\x,t)$ and
\begin{enumerate}[(i)]
   \item $v_n\to v$ a.e. in $Q_T$ if $\beta$ is single valued;
   \item $u_n\to u$ a.e. in $Q_T$ if $\beta^{-1}$ is single valued.
 \end{enumerate}
\end{prop}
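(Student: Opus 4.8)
The plan is to first identify the limit through a Minty--Browder monotonicity argument, and then to upgrade the weak convergences to a.e.\ convergence of one of the two sequences by exploiting the convex duality behind $\beta$ together with the compensated compactness property~\eqref{eq:compcomp}. Throughout, recall that weak convergence in $L^1$ forces $(u_n)_n$ and $(v_n)_n$ to be equi-integrable (Dunford--Pettis).

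\emph{Step 1: identification of the limit.} For every $(a,b)\in\beta$ and every $\varphi\in\Dd(Q_T)$, $\varphi\ge 0$, monotonicity of $\beta$ gives $(v_n-b)(u_n-a)\ge 0$ a.e.; multiplying by $\omega_n\varphi\ge 0$ and integrating over $Q_T$, I would pass to the limit using \eqref{eq:compcomp} for the quadratic term $\int_0^T\!\!\int_\O\omega_n u_nv_n\varphi$, dominated convergence for $\int_0^T\!\!\int_\O\omega_n ab\,\varphi$, and the facts that $\omega_n\varphi\to\omega\varphi$ boundedly a.e.\ and that $(u_n)_n$, $(v_n)_n$ are equi-integrable to treat the two linear terms. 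This yields $\int_0^T\!\!\int_\O\omega(v-b)(u-a)\varphi\ge 0$, hence, since $\omega\ge\un\omega>0$ and $\varphi\ge 0$ is arbitrary, $(v-b)(u-a)\ge 0$ a.e.\ on $Q_T$. Letting $(a,b)$ run over a countable dense subset of the closed graph of $\beta$ and invoking maximality, one obtains $v\in\beta(u)$ a.e.\ on $Q_T$; in particular $u(\x,t)\in\mathrm{dom}\,\beta$ a.e.

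\emph{Step 2: the convex energies converge separately.} Let $\Psi,\Psi^*\colon\R\to[0,+\infty]$ be the conjugate convex l.s.c.\ potentials, normalised by $\min\Psi=\Psi(0)=0$, with $\beta=\partial\Psi$ and $\beta^{-1}=\partial\Psi^*$ (this uses only that $\beta$ is maximal monotone with $0\in\beta(0)$). The Fenchel equality reads $v_n\in\beta(u_n)\Leftrightarrow\Psi(u_n)+\Psi^*(v_n)=u_nv_n$ a.e., and likewise $\Psi(u)+\Psi^*(v)=uv$ a.e.\ by Step~1. Summing these identities, \eqref{eq:compcomp} gives that $\int_0^T\!\!\int_\O\omega_n\big(\Psi(u_n)+\Psi^*(v_n)\big)\varphi$ converges to $\int_0^T\!\!\int_\O\omega\big(\Psi(u)+\Psi^*(v)\big)\varphi$ for every $\varphi\in\Dd(Q_T)$, $\varphi\ge 0$. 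On the other hand, convexity of $\Psi$ makes $w\mapsto\int_0^T\!\!\int_\O\omega\,\Psi(w)\varphi$ weakly lower semicontinuous on $L^1(Q_T)$, and similarly for $\Psi^*$; since both terms are nonnegative, the elementary remark ``$a_n,b_n\ge 0$, $a_n+b_n\to a+b$, $\liminf a_n\ge a$, $\liminf b_n\ge b$ imply $a_n\to a$, $b_n\to b$'' forces $\int_0^T\!\!\int_\O\omega_n\Psi(u_n)\varphi\to\int_0^T\!\!\int_\O\omega\Psi(u)\varphi$ and $\int_0^T\!\!\int_\O\omega_n\Psi^*(v_n)\varphi\to\int_0^T\!\!\int_\O\omega\Psi^*(v)\varphi$.

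\emph{Step 3: strong convergence and conclusion.} Equipped with the separate convergence of the convex energies, I would invoke a Visintin-type strict-convexity lemma: if $w_n\rightharpoonup w$ weakly in $L^1(Q_T)$ and $\int_0^T\!\!\int_\O\omega\,\Psi(w_n)\varphi\to\int_0^T\!\!\int_\O\omega\,\Psi(w)\varphi$, then, up to a subsequence, $w_n\to w$ a.e.\ on the set where $\Psi$ is not affine in any neighbourhood of $w$. Applying this to $(u_n)_n$ with $\Psi$ and to $(v_n)_n$ with $\Psi^*$, and noting that ``$\Psi$ is affine near $u(\x,t)$'' means ``$\beta$ is locally constant around $u(\x,t)$'', while ``$\Psi^*$ is affine near $v(\x,t)$'' means ``$\beta^{-1}$ is locally constant around $v(\x,t)$'', one checks that these two situations are mutually exclusive at any point: if $\beta\equiv c$ near $u(\x,t)$ then $v(\x,t)=c$ and $\beta^{-1}(c)$ contains a nondegenerate interval, so $\beta^{-1}$ is not locally constant around $v(\x,t)$. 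Hence for a.e.\ $(\x,t)\in Q_T$ at least one of the two convergences holds, which is the announced alternative. Finally, $\beta$ single valued is equivalent to $\beta$ having no vertical segment, i.e.\ to $\beta^{-1}$ having no flat part, i.e.\ to $\Psi^*$ being nowhere affine; then the above applied to $(v_n)_n$ has empty exceptional set, giving $v_n\to v$ a.e., which is~(i). Statement~(ii) follows symmetrically, the roles of $\Psi$ and $\Psi^*$ (equivalently of $u_n$ and $v_n$) being interchanged.

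\emph{The main obstacle} is Step~2 together with the weighted form of Step~3. Because only weak $L^1$ bounds are at hand, commuting the convergence $\omega_n\to\omega$ with the possibly non-equi-integrable energies $\Psi(u_n)$ and $\Psi^*(v_n)$ is not immediate; it requires first establishing a local equi-integrability of $(u_nv_n)_n$ (equivalently, ruling out concentration of $\omega_nu_nv_n\varphi$). This is exactly the point where the full strength of \eqref{eq:compcomp}, and not just the weak convergences, is needed, and it is why the case $\omega_n\not\equiv 1$ is genuinely more involved than $\omega_n\equiv 1$: in the latter case one may apply the lower semicontinuity directly with the fixed weight and bypass this difficulty.
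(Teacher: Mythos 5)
Your argument is correct in its overall architecture, but it follows a genuinely different route from the paper. Step 1 (the Minty-type identification $v\in\beta(u)$ via $(u_n-k)(v_n-K)\ge 0$ and \eqref{eq:compcomp}) coincides with the paper's Lemma~\ref{lem:beta-lim}. For the strong convergence, however, the paper never introduces the potentials $\Psi$, $\Psi^*$: it sets $w_n=u_n+v_n$ and works with the resolvents $A=(\mathrm{Id}+\beta^{-1})^{-1}$ and $B=(\mathrm{Id}+\beta)^{-1}$, which are globally defined, nondecreasing, Lipschitz, and satisfy $u_n=B(w_n)$, $v_n=A(w_n)$; a \emph{single} Young measure $(\nu_\bx)_\bx$ attached to $(w_n)_n$ then carries both sequences, and \eqref{eq:compcomp} together with the truncation $T_\ell(A(w_n)B(w_n))\le u_nv_n$ yields $\int\!\!\int (A(\lambda)-A(\mu))(B(\lambda)-B(\mu))\,d\nu_\bx(\lambda)\,d\nu_\bx(\mu)=0$, whence $A$ or $B$ is $\nu_\bx$-a.e.\ constant. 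Your route through the Fenchel--Young equality, separate convergence of the two convex energies, and a Visintin-type strict-convexity lemma reaches the same dichotomy, and your exclusivity argument ($\Psi$ affine near $u(\x,t)$ forces a kink of $\Psi^*$ at $v(\x,t)=c$, hence $\nu^v_{(\x,t)}=\delta_{v(\x,t)}$) is the exact counterpart of the paper's alternative ``$A$ constant $\nu_\bx$-a.e.\ or $B$ constant $\nu_\bx$-a.e.''. What the paper's choice buys is that $A$ and $B$ are finite and $1$-Lipschitz on all of $\R$, so no question of $\Psi,\Psi^*$ taking the value $+\infty$ or of integrability of the energies arises, and one Young measure suffices instead of two; making your Visintin step rigorous would in any case reduce to essentially the same Jensen/Young-measure computation.

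The one point you leave open --- lower semicontinuity of the energies along the \emph{varying} weights $\omega_n$ --- is real, but your diagnosis of how to close it is off: equi-integrability of $(u_nv_n)_n$ is neither necessary nor available in general (weak-$*$ convergence of nonnegative $L^1$ functions to an absolutely continuous limit does not rule out concentration, and \eqref{eq:compcomp} gives no more than that). The correct and elementary patch uses only nonnegativity: since $0\le\int_{Q_T}\omega_n\Psi(u_n)\varphi\le\int_{Q_T}\omega_nu_nv_n\varphi\le C$, apply Egorov to $\omega_n\to\omega$, drop the small exceptional set from the liminf (the integrand being nonnegative), use the classical weak-$L^1$ lower semicontinuity of convex normal integrands with the \emph{fixed} bounded weight $\omega\varphi\1_E$ on the good set $E$, and let $E$ increase via monotone convergence. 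With that patch, and a common subsequence extracted for the two a.e.\ convergences on $\Uu$ and $\Vv$, your proof is complete and yields the stated alternative as well as (i) and (ii).
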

For the general $\beta$ such that neither $\beta$, nor $\beta^{-1}$ is
single-valued one can describe the precise amount of strong convergence
in terms of the support of the Young measures associated to the weakly
convergent in $L^1(Q_T)$ sequences $(u_n)_n$, $(v_n)_n$.
We defer to \S\ref{sec:continuous} the discussion of this general setting,  the proof of Proposition~\ref{prop:Young} and extensions of this result (see Remark~\ref{rem:extensions-beta}).

\begin{rem}\label{rmk:exploit-bis}
There are other ways to exploit the property~\eqref{eq:compcomp}.
The way we propose in Proposition~\ref{prop:Young}
only brings $L^1$ convergence, which is not optimal if additional properties of $\beta$ are assumed
(see, e.g., the exploitation proposed in~\cite[\S4]{Moussa14} or~\cite[\S5]{DE15} based on the
convergence of some norm). However, the $L^1$ convergence is the crucial fact, and it can be 
upgraded using equi-integrability bounds for stronger $L^p$ norms. As a matter of fact, our result 
can be applied to a wide class of problems including strongly degenerate elliptic-parabolic
and parabolic-hyperbolic problems.
\end{rem}

Proposition~\ref{prop:Young} is proved in \S\ref{sec:continuous}, while \S\ref{sec:discrete} is devoted
to the extension to the discrete setting of Proposition~\ref{th:Moussa-compcomp}. A particular
attention is paid in~\S\ref{sec:discrete} to multistep discrete time differentiation like, e.g.,
Backward Differentiation Formulas (BDF). Finally, we apply our framework in \S\ref{sec:Porous}
for proving the convergence of two-point flux in space and BDF2 in time Finite Volume
approximation of the porous medium equation
$$
\p_t u - \Delta u^q = 0, \qquad q >1.
$$
But first, we give in~\S\ref{ssec:Richards} an example of how to use the combination of
Propositions~\ref{th:Moussa-compcomp} and~\ref{prop:Young} in the continuous setting in the case
of the so-called Richards equation modeling the unsaturated flow of water within a porous medium.
All the arguments we use can be transposed to the discrete setting, for example by using
the numerical method proposed in~\cite{CG_VAGNL}.

\subsection{A continuous example: Richards equation}\label{ssec:Richards}

We consider the Richards equation~\cite{Ric31,Bear72}
\be\label{eq:Richards}
\omega \p_t s - \div\left(\frac{k(s)}{\mu} \bbK (\grad p - \rho \g) \right) = 0 \quad \text{ in } Q_T,
\ee
where the two unknowns, namely the saturation $s \in L^\infty(Q_T;[0,1])$ and the pressure head $p$,
are linked by the capillary pressure relation $s = S(p)$ where $S:\R \to [0,1]$ is a nondecreasing function
that satisfies $S(p) = 0$ if $p\le 0$, $S(p)>0$ if $p>0$, and $(1-S)$ belongs to $L^1(\R_+)$.
The porosity $\omega \in L^\infty(Q_T)$ satisfies $\un \omega \le \omega \le \ov \omega$
a.e. in $Q_T$. The intrinsic permeability field $\bbK: \O \to \Mm_d(\R)$ satisfies $\bbK(\x) = \bbK(\x)^T$ and
$$
\un \k|\u|^2 \le \bbK(\x)\u \cdot \u \le \ov \k |\u|^2, \qquad \forall \u \in \R^d, \; \text{ for a.e. } \x \in \O
$$
for some $\un \k, \ov \k >0$. The relative permeability $k:[0,1] \to [0,1]$ is increasing and satisfies $k(0)= 0$,
leading to a (weak) degeneracy at $s=0$. The density $\rho$ and the viscosity $\mu$ are supposed to be constant,
and $\g$ denotes the gravity vector.
The equation~\eqref{eq:Richards} is complemented by the initial data
\be\label{eq:Richards-init}
s_{|_{t=0}} = s_0 \in L^\infty(\O;[0,1]),
\ee
and the Dirichlet boundary condition
\be\label{eq:Richards-bound}
p_{|_{\x \in \p\O}} = p_D \in H^1(\O; \R_+).
\ee
For discussions on more complex boundary conditions, see~\cite{Sch07,BKS11}.

In order to define properly the solution, we introduce the increasing Lipschitz continuous one-to-one mapping
$$
\phi:\begin{cases}
\R_+ \to \R_+ \\
p \mapsto \int_0^p \sqrt{k(S(a))}\d a,
\end{cases}
$$
that is extended to the whole $\R$ as an odd function.
\begin{Def}
\label{Def:Richards}
A couple $(s,p)$ is said to be a weak solution to~\eqref{eq:Richards}--\eqref{eq:Richards-bound}
if $\phi(p) - \phi(p_D) \in L^2((0,T);H^1_0(\O))$, $s = S(p) \in C([0,T]; L^1(\O))$ with $s_{|_{t=0}}= s_0$, and
\be\label{eq:Richards-Kirch}
\omega \p_t s - \div\left(\frac{\sqrt{k(s)}}{\mu} \bbK \left(\grad \phi(p) - \sqrt{k(s)}\rho \g \right) \right) = 0 \quad \text{ in } \Dd'(Q_T).
\ee
\end{Def}

We refer to~\cite{CG_VAGNL} for a recent result of convergence of a carefully designed entropy-consistent nonlinear finite volume scheme for \eqref{eq:Richards-Kirch} in the discrete compactness framework developed in the sequel. Although our main interest is to describe a tool for numerical analysis of degenerate parabolic problems, here we limit ourselves to the continuous framework. Indeed, first, proofs of convergence of numerical schemes are very similar in their spirit to the proofs of stability of solutions with respect to perturbation of data, coefficients or non-linearities. Second, the results of Propositions~\ref{th:Moussa-compcomp} and~\ref{prop:Young} are interesting already in the continuous framework. Therefore, here we focus on illustrating the structural stability feature (cf. e.g. \cite{A.BendahmaneKarlsenOuaroJDE09}) of the continuous problem
\eqref{eq:Richards}--\eqref{eq:Richards-bound}. Let us show that
a sequence $(s_n, p_n)_{n}$ of solutions (in the sense of Definition~\ref{Def:Richards})
to equations
\be\label{eq:Richards-n}
\omega_n \p_t s - \div\left(\frac{k(s)}{\mu} \bbK (\grad p - \rho \g) \right) = 0 \quad \text{ in } Q_T,
\ee
(playing the role of approximation of~\eqref{eq:Richards})
with the initial and boundary data~\eqref{eq:Richards-init}--\eqref{eq:Richards-bound}
 converges towards a solution of~\eqref{eq:Richards}--\eqref{eq:Richards-bound}
in the sense of Definition~\ref{Def:Richards}.
The only difference between \eqref{eq:Richards} and \eqref{eq:Richards-n} resides in introduction of approximate porosities $(\omega_n)_{n\ge1}$ that are supposed to satisfy properties~\eqref{eq:poids-conv}.

\medskip
The derivation of the \emph{a priori} bounds we propose here is formal. We refer to~\cite{CP12} for instance
for a rigorous derivation on a closely related problem.

\medskip
Since $s_n = S(p_n)$ with $S(\R)=[0,1]$, we obtain directly that $0 \le s_n \le 1$  a.e. in $Q_T$,
ensuring the $L^2$-weak convergence of an unlabeled subsequence $(s_n)_n$
towards some function $s \in L^\infty(Q_T;[0,1])$.

Multiplying (formally) the equation~\eqref{eq:Richards-n} by $p_n - p_D$ and integrating on $Q_T$
yields the estimate
$$
\iint_{Q_T} k(S(p_n)) |\grad p_n|^2 \d\x\d t =\iint_{Q_T} |\grad \phi(p_n)|^2 \d\x\d t \le C, \qquad \forall n \ge 1,
$$
for some $C$ independent on $n$. Thanks to Poincar\'e's inequality, one gets that
$$
\|\phi(p_n)\|_{L^2((0,T);H^1(\O))} \le C, \qquad \forall n \ge 1.
$$
In particular, there exists $\xi \in L^2((0,T);H^1(\O))$ with $\xi - \phi(p_D) \in  L^2((0,T);H^1_0(\O))$
such that $\phi(p_n)$
converges weakly in $L^2((0,T);H^1(\O))$ towards $\xi$ as $n\to\infty$.
It follows from  equation~\eqref{eq:Richards-Kirch} that
$$
\|\omega_n \p_t s_n\|_{L^2\left((0,T);H^{-1}(\O)\right)} \le C, \qquad \forall n \ge 1.
$$
Therefore, one can apply Proposition~\ref{th:Moussa-compcomp} to claim that
\be\label{eq:Richards-compcomp}
\omega_n s_n \phi(p_n) 
\rightarrow
\omega s\xi \quad \text{ in } \Dd'(Q_T) \text{ as } n\to \infty.
\ee

The sequences $\left(s_n\right)_n$ and $\left(\phi(p_n)\right)_n$ satisfy $\phi(p_n) \in \beta(s_n)$ for all $n\ge1$,
where $\beta$ is the maximal monotone graph with single valued inverse $\beta^{-1} = S\circ \phi^{-1}$.
We can now apply Proposition~\ref{prop:Young}. This first ensures that $\xi \in \beta(s)$, or equivalently,
defining $p:Q_T \to \R$ by $p : \phi^{-1}(\xi)$, that $s = S(p)$. Second, the almost everywhere convergence
of $s_n$ towards $s$ is ensured (up to a subsequence), so that one has
$$
s_n \underset{n\to\infty}\longrightarrow s \quad
\text{ and } \quad
k(s_n) \underset{n\to\infty}\longrightarrow k(s) \quad
 \text{ strongly in } L^r(Q_T), \quad \forall r \in [1,\infty).
$$
This is enough to pass to the limit in~\eqref{eq:Richards-n} and to claim that $(s,p)$ is a solution
to~\eqref{eq:Richards}--\eqref{eq:Richards-bound} in the sense of Definition~\ref{Def:Richards}.

\section{Proof of Proposition~\ref{prop:Young}}
\label{sec:continuous}

In order to avoid double integrals w.r.t. time and space, the functions we consider in this section
are defined on an open $\Oo$  subset of $\R^N$. $\Oo$ will play the role that $Q_T$ played
in the statement of Proposition~\ref{prop:Young}. This allows in particular to write $\bx \in \Oo$ rather that $(\x,t) \in Q_T$.
The weights $\omega_n$ and $\omega$ are extended to the whole $\Oo$, with
$0 < \un \omega \le \omega_n(\bx) \le \ov \omega$
and $\omega_n(\bx) \to \omega(\bx)$ for almost all $\bx \in \Oo$.

\begin{lem}\label{lem:beta-lim}
Let $\left(u_n\right)_n$ and $\left(v_n\right)_n$ be two sequences weakly converging
in $L^1(\Oo)$ towards $u$ and $v$ respectively
$v_n \in \beta(u_n)$ almost everywhere in $\Oo$ for all $n\ge 1$.
We suppose that $uv \in L^1(\Oo)$, that  $u_n v_n  \in L^1(\Oo)$ for all $n \ge 1$,
and that
$$
\int_\Oo \omega_n u_n v_n \varphi\,  \d\bx \underset{n\to\infty}{\longrightarrow} 
\int_\Oo \omega uv \varphi \, \d\bx, \qquad \forall \varphi \in \Dd(\O).
$$
Then $v \in \beta(u)$ almost everywhere in $\Oo$.
\end{lem}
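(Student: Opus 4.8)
The plan is to characterize the inclusion $v \in \beta(u)$ by a variational (monotonicity) inequality and then pass to the limit in it using the weighted weak convergence of the product $\omega_n u_n v_n$. Recall that $\beta$ being maximal monotone with $0 \in \beta(0)$ is equivalent (Proposition \ref{prop:monograph}(iii)) to the resolvent $J_\lambda := (\mathrm{Id} + \lambda \beta)^{-1}$ being a single-valued nonexpansive map on $\R$ fixing $0$, and one has the well-known equivalence: $v(\bx) \in \beta(u(\bx))$ if and only if for \emph{every} pair $(a,b) \in \R^2$ with $b \in \beta(a)$ one has $(v(\bx) - b)(u(\bx) - a) \ge 0$. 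Actually it suffices to test against a dense countable set of such pairs $(a_k, b_k)$ picked from the graph of $\beta$; since $\beta$ is maximal monotone its graph is closed, so density of a countable subset of the graph suffices to recover the inclusion pointwise a.e. First I would fix such a pair $(a,b)$ with $b \in \beta(a)$ and a nonnegative $\varphi \in \Dd(\Oo)$.

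The core computation is then: since $v_n(\bx) \in \beta(u_n(\bx))$ and $b \in \beta(a)$, monotonicity of $\beta$ gives the pointwise inequality
$$
\bigl(v_n(\bx) - b\bigr)\bigl(u_n(\bx) - a\bigr) \ge 0 \quad \text{a.e. in } \Oo.
$$
Multiplying by $\omega_n(\bx)\varphi(\bx) \ge 0$ and integrating yields
$$
\int_\Oo \omega_n \varphi \Bigl( u_n v_n - a\, v_n - b\, u_n + ab \Bigr)\, \d\bx \ge 0.
$$
Now I pass to the limit term by term. The first term converges to $\int_\Oo \omega u v\, \varphi$ by the hypothesis of the lemma. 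For the remaining three terms I use that $u_n \rightharpoonup u$ and $v_n \rightharpoonup v$ weakly in $L^1(\Oo)$ while $\omega_n \varphi \to \omega \varphi$ boundedly a.e. — so $\omega_n \varphi$ is a sequence of test functions converging strongly in $L^\infty(\Oo)$-weak-$*$ in the sense needed (boundedly and a.e.), hence (by a standard argument: $\int (\omega_n \varphi) v_n = \int (\omega \varphi) v_n + \int (\omega_n \varphi - \omega \varphi) v_n$, the second integral vanishing by equi-integrability of $(v_n)_n$ coming from weak $L^1$-convergence and dominated convergence applied to the bounded factor $\omega_n\varphi - \omega\varphi$) one gets $\int_\Oo \omega_n \varphi\, v_n \to \int_\Oo \omega \varphi\, v$, and likewise for $u_n$. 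The constant term $\int_\Oo \omega_n \varphi\, ab$ converges by dominated convergence. Passing to the limit therefore gives
$$
\int_\Oo \omega \varphi \Bigl( u v - a\, v - b\, u + ab \Bigr)\, \d\bx = \int_\Oo \omega \varphi \bigl(v - b\bigr)\bigl(u - a\bigr)\, \d\bx \ge 0
$$
for all nonnegative $\varphi \in \Dd(\Oo)$. Since $\omega \ge \un\omega > 0$, this forces $(v(\bx) - b)(u(\bx) - a) \ge 0$ for a.e. $\bx \in \Oo$, with an exceptional null set depending on $(a,b)$.

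Finally I would assemble the conclusion: take a countable dense subset $\{(a_k, b_k)\}_{k}$ of the graph of $\beta$, let $\mathcal{N} = \bigcup_k \mathcal{N}_k$ be the (still null) union of the exceptional sets, and observe that for $\bx \notin \mathcal{N}$ the pair $(u(\bx), v(\bx))$ satisfies $(v(\bx) - b_k)(u(\bx) - a_k) \ge 0$ for all $k$; passing to the closure of the graph and invoking maximality of $\beta$, this yields $v(\bx) \in \beta(u(\bx))$, which is the claim. The main obstacle — the only genuinely delicate point — is the term-by-term passage to the limit in the three "linear" terms: one must be careful that weak $L^1$-convergence of $(u_n)_n$ and $(v_n)_n$ does not by itself allow multiplying against a merely $L^\infty$ sequence $\omega_n\varphi$, so the argument crucially uses that $\omega_n\varphi \to \omega\varphi$ \emph{a.e.} and is uniformly bounded, combined with the Dunford–Pettis equi-integrability of the weakly $L^1$-convergent sequences. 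Everything else is bookkeeping: the pointwise monotonicity inequality, the sign of $\omega$, and the density/closedness argument for the graph of $\beta$.
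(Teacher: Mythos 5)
Your proposal is correct and follows essentially the same route as the paper: test the pointwise monotonicity inequality $(u_n-a)(v_n-b)\ge 0$ against $\omega_n\varphi\ge 0$, pass to the limit using the hypothesis for the product term and weak $L^1$ convergence for the linear terms, and invoke the maximality characterization of $\beta$ via its graph. You merely spell out two details the paper leaves implicit (the equi-integrability argument needed to handle the a.e.-convergent bounded weights $\omega_n\varphi$ against weakly $L^1$-convergent sequences, and the countable dense subset of the graph to manage the $(a,b)$-dependent null sets), both of which are correct and welcome.
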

\begin{proof}
The proof relies on the fundamental property of maximal monotone graphs:
\begin{multline}\label{eq:graph-max}
u \in {\rm dom}(\beta) \text{ and } v \in \beta(u) \\
\Leftrightarrow \qquad
(u-k)(v-K) \ge 0, \; \forall k \in {\rm dom}(\beta) \text{ and } K \in \beta(k).
\end{multline}
Let $k \in {\rm dom}(\beta)$ and $K \in \beta(k)$, then, for all  $\varphi \in \Dd(\Oo)$ such that $\varphi\ge 0$,
one has
 $$
  0\leq \iint_{Q_T} \omega_n (u_n-k)(v_n-K)\varphi\,\d\x, \qquad \forall n \ge 1.
 $$
Due to the assumptions of the lemma, passing to the limit $n \to \infty$ one finds
  $$
  0\leq \iint_{Q_T} \omega (u-k)(v-K)\varphi\,\d\x, \qquad \forall \varphi \in \Dd(\Oo) \text{ with } \varphi \ge 0.
 $$
The above property is sufficient to claim that $(u-k)(v-K)\ge 0$ a.e. in $\Oo$. One concludes the
proof of Lemma~\ref{lem:beta-lim} thanks to~\eqref{eq:graph-max}.
\end{proof}

  Keeping the notations of Lemma~\ref{lem:beta-lim},
  we set $w_n=u_n+v_n$. From Proposition~\ref{prop:monograph} we infer that
  $A:=(\text{Id}+\beta^{-1})^{-1}$ and $B:=(\text{Id}+\beta)^{-1}$
  are non-decreasing Lipschitz functions from $\R$ to $\R$ satisfying furthermore $A+B=\text{Id}$, $A(0)=0=B(0)$ and one checks easily that  $v_n=A(w_n)$ and $u_n=B(w_n)$.
  In the same way, we set $w=v+u$. From the assumptions of Lemma~\ref{lem:beta-lim}, we have
  $u_n\rightharpoonup u$ and  $v_n\rightharpoonup v$  weakly
  in $L^1(\Oo)$, whence $w_n\rightharpoonup w$ for the same topology. Since $v_n=A(w_n)$ and $u_n=B(w_n)$, we may use the fundamental
  theorem on representation of weakly convergent in $L^1(\Oo)$ sequences by Young
  measures (see \cite{Ball,Hunger-NY}), to deduce the existence of a Young
  measure $(\nu_{\bx}(\cdot))_{\bx\in \Oo}\subset \text{Prob}(\R)$ (here $\text{Prob}(\R)$
  is the class of all probability measures on $\R$) such that for a.e. $\bx\in \Oo$ there holds
  $$
  w(\bx)=\int_\R \lambda \,d\nu_{\bx}(\lambda),
  $$
  and
  \begin{equation}\label{eq:represent-v-and-u}
   v(\bx)=\int_\R A(\lambda) \,d\nu_{\bx}(\lambda), \qquad  u(\bx)=\int_\R B(\lambda) \,d\nu_{\bx}(\lambda).
  \end{equation}

  \begin{lem}\label{lem:Young2}
Let $\left(u_n\right)_{n}$ and $\left(v_n\right)_{n}$ be two sequences as in Lemma~\ref{lem:beta-lim}.
  For almost all $\bx \in \Oo$, we have either $A(\lambda)=v(\bx)$ for $\nu_{\bx}-$\,a.e. $\lambda\in\R$ or $B(\lambda)=u(\bx)$ for $\nu_{\bx}-$\,a.e. $\lambda\in\R$.
  \end{lem}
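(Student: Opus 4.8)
The plan is to use the compensated-compactness hypothesis to pin down a pointwise (in $\bx$) constraint on the Young measure $\nu_\bx$ --- namely that $A$ and $B$ are \emph{uncorrelated} under $\nu_\bx$ --- and then to read the alternative off the equality case of Chebyshev's correlation inequality. Everything takes place along the subsequence along which $\nu_\bx$ was constructed, which also represents $(u_n)_n=(B(w_n))_n$ and $(v_n)_n=(A(w_n))_n$ through~\eqref{eq:represent-v-and-u}; I abbreviate $\langle g,\nu_\bx\rangle:=\int_\R g(\lambda)\,\d\nu_\bx(\lambda)$. Two structural facts I will lean on: $A$ and $B$ are non-decreasing with $A(0)=B(0)=0$, so that $A(\lambda)$ and $B(\lambda)$ carry the sign of $\lambda$ --- whence $A(\lambda)B(\lambda)\ge0$ for every $\lambda$ and $u_nv_n=A(w_n)B(w_n)\ge0$ a.e.; and $A,B$ are $1$-Lipschitz, so $|A(\lambda)|\le|\lambda|$, $|B(\lambda)|\le|\lambda|$, which are $\nu_\bx$-integrable since $(w_n)_n$, being weakly convergent in $L^1$, is bounded and equi-integrable.

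The first step is a one-sided identification of $\langle AB,\nu_\bx\rangle$. For $M>0$ put $\theta_M(r):=\max(-M,\min(M,r))$ and $\psi_M:=(A\circ\theta_M)(B\circ\theta_M)$: this is continuous and bounded, $0\le\psi_M(\lambda)\uparrow A(\lambda)B(\lambda)$ as $M\uparrow+\infty$, and $0\le\psi_M(w_n)\le A(w_n)B(w_n)=u_nv_n$ a.e.\ (equality on $\{|w_n|\le M\}$; elsewhere one compares two similarly-ordered products of quantities of constant sign, using the monotonicity of $A,B$). Being bounded, $\psi_M(w_n)$ converges weakly-$*$ in $L^\infty(\Oo)$ to $\langle\psi_M,\nu_\bx\rangle$ by the Young measure theorem used above; since moreover $\omega_n\varphi\to\omega\varphi$ in $L^1(\Oo)$ by dominated convergence, one gets $\int_\Oo\omega_n\psi_M(w_n)\varphi\,\d\bx\to\int_\Oo\omega\langle\psi_M,\nu_\bx\rangle\varphi\,\d\bx$ for each fixed $M$. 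Testing $\psi_M(w_n)\le u_nv_n$ against $\omega_n\varphi$ with $0\le\varphi\in\Dd(\Oo)$, passing to the limit in $n$ while invoking~\eqref{eq:compcomp} on the right, and then letting $M\uparrow+\infty$ by monotone convergence, I obtain $\int_\Oo\omega\langle AB,\nu_\bx\rangle\varphi\,\d\bx\le\int_\Oo\omega uv\,\varphi\,\d\bx$ for every such $\varphi$, hence (as $\omega\ge\un\omega>0$) $\langle AB,\nu_\bx\rangle\le u(\bx)v(\bx)$ for a.e.\ $\bx$, and in particular $\langle AB,\nu_\bx\rangle<+\infty$ a.e.\ because $uv\in L^1(\Oo)$.

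Next I invoke Chebyshev's correlation inequality: $A,B$ non-decreasing gives $(A(\lambda)-A(\mu))(B(\lambda)-B(\mu))\ge0$ for all $\lambda,\mu$, and for a.e.\ $\bx$ (all integrals finite by the previous step and $1$-Lipschitzness)
\[
0\le\iint_{\R^2}(A(\lambda)-A(\mu))(B(\lambda)-B(\mu))\,\d\nu_\bx(\lambda)\,\d\nu_\bx(\mu)=2\bigl(\langle AB,\nu_\bx\rangle-\langle A,\nu_\bx\rangle\langle B,\nu_\bx\rangle\bigr).
\]
By~\eqref{eq:represent-v-and-u}, $\langle A,\nu_\bx\rangle=v(\bx)$ and $\langle B,\nu_\bx\rangle=u(\bx)$, so this reads $\langle AB,\nu_\bx\rangle\ge u(\bx)v(\bx)$; together with the previous step it squeezes into the equalities $\langle AB,\nu_\bx\rangle=u(\bx)v(\bx)$ and $\iint(A(\lambda)-A(\mu))(B(\lambda)-B(\mu))\,\d\nu_\bx(\lambda)\,\d\nu_\bx(\mu)=0$, so that for a.e.\ $\bx$ one has $(A(\lambda)-A(\mu))(B(\lambda)-B(\mu))=0$ for $\nu_\bx\otimes\nu_\bx$-a.e.\ $(\lambda,\mu)$. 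From here the dichotomy follows pointwise: if $A$ is not $\nu_\bx$-a.e.\ constant, pick $\alpha$ with $\nu_\bx(\{A\le\alpha\})>0$ and $\nu_\bx(\{A>\alpha\})>0$ (both are half-lines since $A$ is non-decreasing); on the product of these two sets $A(\lambda)\ne A(\mu)$, so the vanishing forces $B(\lambda)=B(\mu)$ there, and Fubini then makes $B$ equal $\nu_\bx$-a.e.\ to one and the same constant on each of the two half-lines, hence on all of $\R$ modulo $\nu_\bx$, necessarily the constant $\langle B,\nu_\bx\rangle=u(\bx)$; in the remaining case $A$ equals $\langle A,\nu_\bx\rangle=v(\bx)$ $\nu_\bx$-a.e. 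This is the asserted alternative for a.e.\ $\bx\in\Oo$, the two exceptional sets being measurable because $\bx\mapsto\nu_\bx$ is.

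The step I expect to be the genuine obstacle is the first one: the products $(u_nv_n)_n$ need not be equi-integrable, so their weak limit cannot be identified with $\langle AB,\nu_\bx\rangle$ directly. The point is that, thanks to $AB\ge0$, truncation only \emph{decreases} $u_nv_n$, so truncating before passing to the limit costs nothing and yields the inequality $\langle AB,\nu_\bx\rangle\le uv$ at once (the tails being controlled a posteriori, through $uv\in L^1$), while the opposite inequality is provided for free by Chebyshev's inequality; the two then sandwich into the equality that triggers the dichotomy.
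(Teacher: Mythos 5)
Your proof is correct and follows essentially the same route as the paper's: both arguments produce a continuous bounded minorant of $u_nv_n$ by truncation (you truncate the argument $w$ via $\theta_M$, the paper truncates the product via $T_\ell(AB)$ --- an immaterial difference), pass to the limit through the Young measure to get $\int_\R A B\,\d\nu_\bx \le u(\bx)v(\bx)$, and then use the monotonicity of $A$ and $B$ in the Chebyshev/Hungerb\"uhler identity to force $\iint (A(\lambda)-A(\mu))(B(\lambda)-B(\mu))\,\d\nu_\bx\,\d\nu_\bx = 0$ and hence the dichotomy. Your write-up is somewhat more explicit than the paper's on the final step (extracting the alternative from the vanishing of the correlation) and on the integrability bookkeeping, but the underlying argument is the same.
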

  \begin{proof}
  For $\ell \in \N$, we denote by $T_\ell:\R_+ \to \R$ the truncation function defined by
  $T_\ell(r) = \min\{r,\ell\}$. Observe that $T_\ell(A(w_n)B(w_n))\leq u_n v_n$ due to the fact that $AB\geq 0$. The function $w \mapsto T_\ell(A(w)B(w))$ being continuous
  and bounded, we can apply the fundamental theorem of \cite{Ball} and claim that
  for all $\ell \in \N$, one has
  \begin{multline*}
  \int_\Oo \omega uv\varphi\, \d\bx  = \lim_{n\to \infty}\int_\Oo \omega_n u_n v_n \varphi \d\bx
  \ge \lim_{n\to\infty}  \int_\Oo \omega_n T_\ell(u_n v_n)\varphi\d\bx
  \\=   \lim_{n\to\infty} \int_\Oo \omega_n T_\ell(A(w_n)B(w_n))\varphi\d\bx
  = \int_\Oo \omega \left(\int_{\R} T_\ell (A(\lambda) B(\lambda)) \d\nu_\bx(\lambda)\right)\varphi \d\bx.
  \end{multline*}
  Since this inequality holds for all $\ell \in \N$, it also holds for the limit $\ell \to \infty$
  that we can identify thanks to the monotone convergence theorem. Bearing in mind the
  representation~\eqref{eq:represent-v-and-u} of the functions $u$ and $v$, this ensures that,
  for all $\varphi \in \Dd(\Oo)$  with  $\varphi \ge 0$, there holds
   $$
   \int_\Oo   \omega \left(\int_\R A(\lambda)\d\nu_\bx(\lambda)\right) \left(\int_\R B(\lambda) \d\nu_\bx(\lambda)\right) \varphi \d\bx
   \ge \int_\Oo \omega \left(\int_\R A(\lambda) B(\lambda) \d\nu_\bx(\lambda)\right)\varphi \d\bx,
   $$
   or equivalently (see, e.g., \cite{Hunger-Duke}) that
    $$
   \int_{\Oo} \omega \varphi \left(\int_\R\!\int_\R \Bigl(A(\lambda)-A(\mu) \Bigr)\Bigl(B(\lambda)-B(\mu)\Bigr) \,d\nu_{\bx}(\lambda) \,d\nu_{\bx}(\mu)\right)\d\bx   \leq 0.
  $$
  The integrand of the above integral being nonnegative thanks to the monotonicity of $A$ and $B$ and the
  nonnegativity of $\varphi$ and $\omega$, it equals $0$. Since $\varphi$ is arbitrary, we deduce that for a.e. $\bx \in \Oo$,
  one has
  $$
  \int_\R\!\int_\R \Bigl(A(\lambda)-A(\mu) \Bigr)\Bigl(B(\lambda)-B(\mu)\Bigr) \,d\nu_{\bx}(\lambda) \,d\nu_{\bx}(\mu) = 0,
  $$
  whence $(A(\lambda)-A(\mu))(B(\lambda)-B(\mu))=0$ for $\nu_{\bx}\otimes\nu_{\bx}-$ a.e. $(\lambda,\mu)\in\R^2$.
   Hence it is not difficult to conclude that either $A$ is constant $\nu_{\bx}-$ a.e., or $B$ is is constant $\nu_{\bx}-$ a.e., the value of the corresponding constant being fixed by \eqref{eq:represent-v-and-u}. This concludes the proof of Lemma~\ref{lem:Young2}.
  \end{proof}

Let us introduce the sets $\Uu$ and $\Vv$ defined by
 \begin{align*}
  \Uu =& \{ \bx \in \Oo \; | \; B = u(\bx) \text{ } \nu_\bx-\text{a.e.}  \}, \\
  \Vv =& \{ \bx \in \Oo \; | \; A = v(\bx) \text{ } \nu_\bx-\text{a.e.} \}.
 \end{align*}
  It follows from Lemma~\ref{lem:Young2} that $\Oo \setminus (\Uu \cup \Vv)$ is negligible.
  \begin{lem}\label{lem:Young3}
 Under the assumptions of Lemma~\ref{lem:beta-lim}, one has
 $$
 u_n \underset{n\to\infty}\longrightarrow u \text{ strongly in } L^1(\Uu) \quad \text{ and } \quad
 v_n \underset{n\to\infty}\longrightarrow v \text{ strongly in } L^1(\Vv).
 $$
   \end{lem}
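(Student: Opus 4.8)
The plan is to use the Young measure representation established above together with the defining property of the sets $\Uu$ and $\Vv$. On $\Uu$, the measure $\nu_\bx$ is carried by the level set $\{\lambda : B(\lambda) = u(\bx)\}$ for a.e.\ $\bx$; but since $u_n = B(w_n)$ and the Young measure generated by $(w_n)_n$ is $\nu_\bx$, this means that for a.e.\ $\bx \in \Uu$ the function $B$ is $\nu_\bx$-a.e.\ equal to the constant $u(\bx)$. The idea is then to show that this ``Dirac-like'' behavior of $B$ under $\nu_\bx$ forces the sequence $(B(w_n))_n = (u_n)_n$ to converge in measure, hence (being weakly $L^1$-convergent, hence equi-integrable) strongly in $L^1(\Uu)$. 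The argument for $v_n$ on $\Vv$ is symmetric, exchanging the roles of $A,u$ and $B,v$, so I would only write one case in detail.

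Concretely, first I would recall that a weakly $L^1$-convergent sequence is equi-integrable (Dunford--Pettis), so it suffices to prove $u_n \to u$ in measure on $\Uu$. To get convergence in measure, I would test against the continuous bounded function $g(w) = |B(w) - u(\bx)| \wedge 1$... but $u(\bx)$ depends on $\bx$, so instead I would argue via a single scalar quantity: consider $\int_{\Uu} \big| B(w_n(\bx)) - u(\bx)\big| \varphi(\bx)\,\d\bx$ for $\varphi \in \Dd(\Oo)$, $\varphi \ge 0$, or rather work with truncations $\theta_\ell\big(|B(w_n) - u|\big)$ where $\theta_\ell(r) = r \wedge \ell$ to stay in the bounded-continuous regime needed for the Young measure theorem of \cite{Ball}. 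Passing to the limit in $n$ along the Young measure,
$$
\lim_{n\to\infty}\int_{\Uu} \theta_\ell\big(|B(w_n(\bx)) - u(\bx)|\big)\varphi(\bx)\,\d\bx
= \int_{\Uu} \left(\int_\R \theta_\ell\big(|B(\lambda) - u(\bx)|\big)\,\d\nu_\bx(\lambda)\right)\varphi(\bx)\,\d\bx = 0,
$$
the inner integral vanishing because $B(\lambda) = u(\bx)$ for $\nu_\bx$-a.e.\ $\lambda$ when $\bx \in \Uu$. Letting $\ell \to \infty$ poses no problem here since we may also truncate $u$ to work on sets of finite measure where $|u|$ is bounded, or simply note that $\theta_\ell(|B(w_n)-u|) \uparrow |B(w_n)-u|$ would require an equi-integrability/uniform integrability input; the cleaner route is to fix $\ell$, conclude $\theta_\ell(|u_n - u|) \to 0$ in $L^1_{\mathrm{loc}}(\Uu)$, hence (up to a subsequence) $\theta_\ell(|u_n-u|) \to 0$ a.e., and since this holds for every $\ell$ and $\theta_\ell(r)\to r$, a diagonal argument gives $u_n \to u$ a.e.\ on $\Uu$, which combined with equi-integrability yields $L^1(\Uu)$ convergence by Vitali.

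The main obstacle I anticipate is the bookkeeping around the $\bx$-dependence of the limit $u(\bx)$ inside the Young-measure passage to the limit: the theorem of \cite{Ball} is stated for Carath\'eodory integrands $f(\bx,\lambda)$, so I must check that $(\bx,\lambda) \mapsto \theta_\ell(|B(\lambda) - u(\bx)|)\varphi(\bx)\mathbf{1}_{\Uu}(\bx)$ is a legitimate such integrand (measurable in $\bx$, continuous and bounded in $\lambda$ uniformly enough), and that $\Uu$ is a measurable set — which follows from the fact that $\bx \mapsto \int_\R |B(\lambda)-u(\bx)|\wedge 1\,\d\nu_\bx(\lambda)$ is measurable and $\Uu$ is, up to a null set, its zero set. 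A secondary subtlety is that the a.e.\ convergence extracted for each $\ell$ is only along a subsequence, so I will need to extract a single subsequence working simultaneously for all $\ell \in \N$ (again a diagonal extraction), which is harmless since Lemma~\ref{lem:Young3} and the surrounding results are already stated up to unlabeled subsequences. Everything else is a routine application of Vitali's convergence theorem and the Dunford--Pettis characterization of equi-integrability.
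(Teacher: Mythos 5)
Your proof is correct, but it is implemented differently from the paper's. The paper handles the $\bx$-dependence of the limit $u(\bx)$ not through the Carath\'eodory-integrand version of the fundamental theorem, but by approximating $u$ in $L^1(\Uu)$ by a simple function $u^\eps=\sum_{i}\k_i\1_{E_i}$: it then applies the Young-measure limit only to the $\bx$-independent Lipschitz functions $w\mapsto|B(w)-\k_i|$ (whose compositions with $w_n$ are equi-integrable because $B$ is Lipschitz and $(w_n)_n$ is weakly convergent in $L^1$), obtains $\int_\Uu|u_n-u^\eps|\,\d\bx\to\int_\Uu|u-u^\eps|\,\d\bx\le\eps$ using that $B$ is $\nu_\bx$-a.e.\ equal to $u(\bx)$ on $E_i\subset\Uu$, and concludes by the triangle inequality --- no convergence in measure, no a.e.\ extraction, no Vitali, no Dunford--Pettis. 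Your route trades that simple-function reduction for the refined version of the fundamental theorem for Carath\'eodory integrands (which is exactly what the cited reference \cite{Hunger-NY} supplies, so the ingredient is legitimately available) together with the truncation, diagonal-extraction and Vitali bookkeeping you describe; the payoff is the conceptually transparent statement that the collapse of the Young measure forces convergence in measure. Two minor simplifications to your write-up: a single truncation level, say $\ell=1$, already gives $u_n\to u$ a.e.\ on $\Uu$ (if $\min(r_n,1)\to0$ then $r_n\to0$), so the diagonal argument over $\ell$ is unnecessary; and, as you note, the equi-integrability needed for Vitali is immediate from the assumed weak $L^1$ convergence of $(u_n)_n$.
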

  \begin{proof}
  We will prove that $u_n\to u$ strongly in $L^1(\Uu)$, the proof of $v_n \to v$ being similar.
 Since $u \in L^1(\Uu)$, it can be approximated by simple functions: for all $\eps >0$, there exist
 an integer $I_\eps$ and a simple
 function $u^\eps = \sum_{i=1}^{I_\eps} \k_i \1_{E_i}$, where each $\k_i$ ($1\le i \le I_\eps$) is a
 real value and each  $E_i$ is a measurable subset of $\Uu$ with $\bigcup_i E_i = \Uu$, such that
 \be\label{eq:u-u^eps}
 \|u^\eps-u\|_{L^1(\Uu)} \le \eps.
 \ee
 Without loss of generality, we can assume that $E_i$ is bounded if $\k_i \neq 0$.
 For all $i \in \{1,\dots, I_\eps\}$, the function $w \mapsto |B(w)-\k_i|$ is Lipschitz continuous, ensuring
 the uniform equi-integrability of the sequence $\left(|B(w_n)-\k_i|\right)_{n}$, so
 that
 $$
 \int_\Uu |u_n - u^\eps|\d\bx = \sum_{i = 1}^{I_\eps} \int_{E_i} |B(w_n) - \k_i| \d\bx
 \underset{n\to\infty}{\longrightarrow}
  \sum_{i = 1}^{I_\eps} \int_{E_i} \int_\R |B(\lambda) - \k_i| \d\nu_\bx(\lambda) \d\bx.
 $$
 Since $E_i$ is a subset of $\Uu$, we have
$$
 \int_\Uu |u_n - u^\eps|\d\bx\underset{n\to\infty}{\longrightarrow} \int_\Uu |u-u^\eps|\d\bx, \qquad \forall \eps >0.
$$
 It follows from the triangle inequality and from~\eqref{eq:u-u^eps} that
 $$
 \underset{n\to \infty}{\rm limsup}  \int_\Uu |u_n - u|\d\bx \le 2 \|u^\eps-u\|_{L^1(\Uu)} \le 2 \eps, \qquad \forall \eps >0.
 $$
 Since $\eps$ is arbitrary, we find that $u_n \to u$ strongly in $L^1(\Uu)$.
  \end{proof}
	
  The last lemma of this section focuses on the case where either $\beta$ or $\beta^{-1}$
  is single-valued.
  \begin{lem}\label{lem:Young4}
 Under the assumptions of Lemma~\ref{lem:beta-lim}, and under the additional assumption that
 $\beta$ is single-valued,  $v_n$ converges to $v$ strongly in $L^1(\Oo)$ as $n \to \infty$. Similarly, if $\beta^{-1}$
 is single valued, then $u_n$ converges to $u$ strongly in $L^1(\Oo)$ as $n \to \infty$.
  \end{lem}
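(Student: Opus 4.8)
The plan is to reduce everything to the two convergence statements already furnished by Lemma~\ref{lem:Young3}, namely $u_n\to u$ in $L^1(\Uu)$ and $v_n\to v$ in $L^1(\Vv)$, together with the fact (Lemma~\ref{lem:Young2}) that $\Oo\setminus(\Uu\cup\Vv)$ is negligible. More precisely, I will show that when $\beta$ is single-valued one has $\Uu\subseteq\Vv$ up to a negligible set, so that $\Vv=\Oo$ up to a negligible set and the convergence $v_n\to v$ in $L^1(\Vv)$ becomes the desired convergence in $L^1(\Oo)$. The case where $\beta^{-1}$ is single-valued is then obtained symmetrically, exchanging the roles of $(u_n,u,B,\Uu)$ and $(v_n,v,A,\Vv)$.

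The key elementary observation is that if $\beta$ is single-valued, then $B=(\text{Id}+\beta)^{-1}$ is \emph{injective}: indeed, if $B(w_1)=B(w_2)=:y$, then $w_i-y\in\beta(y)$ for $i=1,2$, and since $y\in{\rm dom}(\beta)$ while $\beta(y)$ reduces to a single point, we get $w_1=w_2$. (Dually, $A=(\text{Id}+\beta^{-1})^{-1}$ is injective as soon as $\beta^{-1}$ is single-valued.) Now fix $\bx\in\Uu$ for which the Young measure representation~\eqref{eq:represent-v-and-u} holds; this is the case for a.e.\ $\bx$. By definition of $\Uu$, the Borel set $\{\lambda\in\R:\,B(\lambda)=u(\bx)\}$ carries full $\nu_\bx$-mass; since $B$ is injective, this set is a single point $\{\lambda_\bx\}$, hence $\nu_\bx=\delta_{\lambda_\bx}$. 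Consequently $A(\lambda)=A(\lambda_\bx)$ for $\nu_\bx$-a.e.\ $\lambda$, and by~\eqref{eq:represent-v-and-u} the common value is $A(\lambda_\bx)=\int_\R A\,\d\nu_\bx=v(\bx)$, i.e.\ $\bx\in\Vv$. This proves $\Uu\subseteq\Vv$ up to a negligible set; combining with Lemma~\ref{lem:Young2} we get $\Vv=\Oo$ up to a negligible set, and Lemma~\ref{lem:Young3} then yields $v_n\to v$ strongly in $L^1(\Vv)=L^1(\Oo)$. The argument when $\beta^{-1}$ is single-valued is identical after swapping $A\leftrightarrow B$, $\Uu\leftrightarrow\Vv$, $u\leftrightarrow v$.

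The only point that requires care is the implication ``$B$ constant $\nu_\bx$-a.e.\ $\Rightarrow\ \nu_\bx$ is a Dirac mass'', which uses precisely the injectivity — equivalently the strict monotonicity — of the resolvent $B$: for a merely non-decreasing $B$ one could only conclude that $\nu_\bx$ is carried by an interval on which $B$ is flat, and this is exactly the obstruction that prevents the conclusion for a general $\beta$ that is neither single-valued nor has single-valued inverse. Beyond this, the proof is just bookkeeping with the two negligible sets produced in Lemmas~\ref{lem:Young2} and~\ref{lem:Young3}.
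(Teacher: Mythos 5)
Your proof is correct and follows essentially the same route as the paper: single-valuedness of $\beta$ makes the resolvent $B=(\text{Id}+\beta)^{-1}$ strictly increasing (injective), which forces $\Vv=\Oo$ up to a negligible set, after which Lemma~\ref{lem:Young3} gives the conclusion. The only difference is that you spell out explicitly the step ``$B$ constant $\nu_\bx$-a.e.\ $\Rightarrow$ $\nu_\bx$ is a Dirac mass $\Rightarrow$ $\bx\in\Vv$'', which the paper's one-line proof leaves implicit.
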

  \begin{proof}
  Assume that $\beta$ is single-valued, then the function $B$ is (strictly) increasing, thus it is non-constant on any non-trivial interval.
  Therefore, Lemma~\ref{lem:Young2} implies that, up to a negligible set, $\Oo = \Vv$, and one concludes by
  using Lemma~\ref{lem:Young3}.
    \end{proof}

\begin{rem}\label{rem:extensions-beta}
The assumption that maximal monotone graph $\beta$ satisfy $0 \in \beta(0)$ is easily dropped, indeed,
it is enough to change in Proposition~\ref{prop:Young} the functions $u_n,v_n$ into $u_n-k,v_n-K$ respectively with $(k,K)\in \beta$.
It is also immediate to extend the result of Proposition~\ref{prop:Young} to a measurable in $x$ family of maximal monotone graphs
$(\beta(\bx,\cdot))_{\bx\in \Oo}$ (to fix the ideas, we can enforce measurability by requiring that the functions $(\bx,z)\mapsto (\text{Id}+\beta(\bx,\cdot))^{-1}(z)$,
$(\bx,z)\mapsto (\text{Id}+\beta^{-1}(\bx,\cdot))^{-1}(z)$ be Carath\'eodory), and $0\in \beta(\bx,0)$ for a.e. $\bx\in\Oo$.
At a price of some simple additional assumptions on $(\beta_n)_n$, one can also consider the case of a sequence of convergent nonlinearities:
$v_n(\cdot)\in\beta_n(\cdot,u_n(\cdot))$.

Finally, observe that the assumptions of weak $L^1$ convergence in $\Oo$ can be turned into $L^1_{loc}(\Oo)$ weak convergence assumptions;
in this case, the conclusions in Lemmas~\ref{lem:Young3},\,\ref{lem:Young4} will turn into strong $L^1_{loc}$ convergences.
\end{rem}

\section{Discrete ``compensated compactness'' result and its consequences.}\label{sec:discrete}

Our goal is now to derive a discrete counterpart of Proposition~\ref{th:Moussa-compcomp},
namely Proposition~\ref{thm:compcomp-disc}.  Combined with Proposition~\ref{prop:Young}, it leads to
Theorem~\ref{thm:main} that can be used as a black-box.

One needs to define discrete operators for defining a discrete function, its gradient and its time derivative.
Rather than focusing on a particular numerical method, we concentrate on the fundamental
properties a numerical method has to fulfill so that our result holds. This motivates the use of the
so-called \emph{gradient scheme} framework~\cite{DEGH13} for the spatial discretization in \S\ref{ssec:space-discr}.
Concerning the time discretization, the approach discussed in \S\ref{ssec:time-discr} allows to consider either
some one-step discretization methods, like for instance Euler and Runge-Kutta methods, or multistep methods,
like Backward Differentiation Formula (BDF). The main result of this section, namely Theorem~\ref{thm:compcomp-disc},
is stated and proved in~\S\ref{ssec:discrcompcomp}.

In what follows, $\O$ is supposed to be a Lipschitz continuous open bounded subset of $\R^d$.
We restrict our attention to the case of cylindrical discretizations of
$Q_T:=\O\times(0,T)$, i.e., discretizations obtained thanks to a discretization of $\O$ and a discretization of $(0,T)$.

\subsection{Spatial discretization}\label{ssec:space-discr}

Concerning the space discretization,
in order to be able to consider a wide range of possible numerical methods
(including several Finite Elements with mass lumping and Finite Volume methods),
we stick to the \emph{Gradient Schemes} framework developed in~\cite{DEGH13}.

Let $m \in \N^\ast$ be the number of degrees of freedom in space, we assume that there exist two linear operators
$$  \pi_{m}: \R^m \to L^\infty(\O), \qquad \grad_m: \R^m \to L^\infty(\O)^d$$
such that the below assumptions (${\bf A}_\x$1)--(${\bf A}_\x$3) hold.

\bigskip
\begin{enumerate}[(${\bf A}_\x$1)]

\item\label{A:compact} 
For all $p \in [1,\infty)$ and all $m\ge 1$, there exists a norm $\u \mapsto \|\u\|_{p,m}$ on $\R^m$ such that
the following assumption on the space translates holds:
\be\label{eq:translates_x}
\lim_{|\boldsymbol\zeta|\to0} \sup_{m\ge 1} \sup_{\bv_m \in \R^m \setminus \{0\}}
\frac{{\|  \pi_m \bv_m(\cdot + \boldsymbol\zeta) -  \pi_m \bv_m \|}_{L^p(\O)}}{{\| \bv_m \|}_{p,m}} = 0, \quad \forall p \in [1,+\infty),
\ee
the function $ \pi_m v_m$ being extended by $0$ outside of $\O$.
\end{enumerate}

\noindent
 In particular, a bounded  w.r.t. the norm $\| \cdot \|_{p,m}$ sequence
$\left(\u_m^{(k)}\right)_{k \ge 0} \subset \R^m$ yields a relatively compact
sequence $\left( \pi_m \u_m^{(k)}\right)_{k \ge 0}$ in $L^p(\O)$.
\begin{rem}
A classical choice for the norm $\|\cdot\|_{p,m}$ is
$$
\|\u_m\|_{m,p} := \|  \pi_m \u_m\|_{L^p(\O)} + \|\grad_m \u_m\|_{L^p(\O)^d}, \qquad \forall \u_m \in \R^m,
$$
as suggested in~\cite{DEGH13}.
Some spatial discretizations enjoy discrete Sobolev
injections (see for instance the appendix of~\cite{EGH10} for the case of Dirichlet boundary conditions and Appendix B of \cite{ABRB11} for the Neumann ones), i.e.,  the property~\eqref{eq:translates_x} holds if one sets
$$
\|\u_m\|_{m,p} := \|  \pi_m \u_m\|_{L^q(\O)} + \|\grad_m \u_m\|_{L^q(\O)^d}, \qquad \forall \u_m \in \R^m
$$
for any $q > \frac{pd}{d+p}$. Notice that $q$ can be chosen strictly smaller than $p$ in that case, and that
on the contrary to the notation adopted in~\cite{DEGH13}, the subscript
$p$ in $\|\cdot\|_{p,m}$ does not necessary refer to $W^{1,p}(\O)$, but
  it refers to the more general
  property of ``uniform in $m$ compactness'' of the operator $\pi_m:(\R^m,\|\cdot\|_{m,p}) \to L^p(\Omega)$.
 \end{rem}

\medskip
\begin{enumerate}[(${\bf A}_\x$2)]
\item\label{A:beta-consistent} 
Let $\u_m = \left(u_{m,i}\right)_{1\le i \le m}$
and $\bv_m = \left(v_{m,i}\right)_{1\le i \le m}$ be two vectors of $\R^m$ such that $\bv_m \in \beta(\u_m)$, i.e.,
$v_{m,i}\in\beta(u_{m,i})$ for all $i \in \{1,\dots, m\}$, then $ \pi_m \u_m \in \beta( \pi_m \bv_m)$ for a.e. $\x \in \O$.
\end{enumerate}

\smallskip\noindent
We should stress that this assumption is restrictive: typically, it is fulfilled for piecewise constant discrete solutions produced by reconstruction operator $ \pi_m$, and therefore our analysis is suitable for finite volume methods and for the simplest finite element methods (in particular, methods with mass lumping).

\bigskip
Before formulating the last assumption, we need more notation. In the sequel, $\left(\omega_m\right)_{m\ge 1} \subset L^\infty(\O)$ denotes a sequence for which there exists
$\un \omega, \ov \omega >0$ such that
\be\label{eq:omega_m-bound}
 \un \omega \le \omega_m \le \ov \omega, \quad \text{ a.e. in } \O, \forall m \ge 1.
\ee
We additionally assume that there exists $\omega \in L^\infty(\O)$ such that
\be\label{eq:omega_m-conv}
\omega_m \underset{m\to\infty}\longrightarrow \omega \quad \text{ almost everywhere in $\O$}.
\ee
Obviously, one has
 $\un \omega \le \omega \le \ov \omega$ a.e. in  $\O$.

\medskip
Given $\varphi \in C^\infty_c(\O)$,
our last assumption is formulated in terms of the subset  $\Pp_m(\varphi)$ of $\R^m$ defined by
$$
\Pp_m(\varphi) = \left\{ \bv_m \in \R^m \; \left| \; \int_\O \omega_m  \pi_m \u_m ( \pi_m \bv_m - \varphi) \d\x = 0, \; \forall \u_m \in \R^m \right\}\right. .
$$

\bigskip
\begin{enumerate}
\item[$({\bf A}_\x3)$] There exists a linear operator $\boldsymbol{P}_m: C^\infty_c(\O) \to \R^m$ such that $\boldsymbol{P}_m\varphi \in \Pp_m(\varphi)$, and
such that there exists $C$ not depending on $m$ such that
$$
\| \grad_m \boldsymbol{P}_m \varphi \|_{L^\infty(\O)} \le C \| \grad \varphi \|_{L^\infty(\O)}, \qquad \forall \varphi \in C^\infty_c(\O).
$$
\end{enumerate}
\noindent The set $\Pp_m(\varphi)$ is the preimage under $ \pi_m$ of the $L^2_{\omega_m}(\O)$ projection of $\varphi$ on the range of $ \pi_m$.
It is not empty, and it reduces to a singleton in the particular case where $\bv_m \mapsto \| \pi_m \bv_m\|_{L^2(\O)}$ is a norm on $\R^m$.
In the latter case, the linearity of $\boldsymbol{P}_m$ is automatic since the range of $\pi_m$ is a finite-dimensional subspace of $L^2_{\omega_m}(\O)$.
Since $\boldsymbol{P}_m\varphi \in \Pp_m(\varphi)$, there holds
\be\label{eq:Pm-prop}
\int_\O \omega_m  \pi_m \u_m ( \pi_m \boldsymbol{P}_m \varphi - \varphi) \d\x = 0, \quad \forall \varphi \in C^\infty_c(\O), \forall u_m \in \R^m.
\ee

\medskip
\begin{rem}
Since we focus here only on compactness properties, we do not require the reconstruction
operators $ \pi_m$ and $\grad_m$ to fulfill the natural consistency and conformity relations  that are required in~\cite{DEGH13} for proving the convergence of the methods based on gradient schemes.
For instance, in problems that are posed in a Sobolev space $W^{1,p}$, the consistency and conformity relations read, respectively:
 $\forall \varphi \in W^{1,p}(\O)$,
\be\label{eq:consistence_x}
S_m(\varphi) := \min_{\bv_m \in \R^m} \left( \|  \pi_m \bv_m - \varphi \|_{L^p(\O)} + \| \grad_m \bv_m  - \grad \varphi \|_{L^p(\O)^d}\right)
\underset{m \to \infty}{\longrightarrow} 0;
\ee
and $\forall \boldsymbol \varphi \in \Cc^1(\ov \O; \R^d)$,
\be\label{eq:conformite_x}
W_m(\boldsymbol\varphi) := \max_{\bv_m \in \R^m} \frac{1}{{\| \bv_m \|}_{p,m}}\left| \int_{\O}( \grad_m \bv_m \cdot \boldsymbol\varphi +  \pi_m \bv_m \div \boldsymbol\varphi ) \d\x \right|
\underset{m \to \infty}{\longrightarrow} 0.
\ee
\end{rem}

\subsection{Time discretization}\label{ssec:time-discr}

\subsubsection{Extension of the spatial reconstruction operators} \label{sssec:time-space}

A time discretization of $(0,T)$ consists in a subdivision $0 = t_0 < t_1 < \dots < t_n = T$ of the interval $[0,T]$.
For $k \in \{1,\dots, n\},$ we denote $\dt_{n,k} = t_k - t_{k-1}$ and $\dt_n = \max_{1 \le k \le n} \dt_{n,k}$.

\smallskip
This allows to extend the operators $ \pi_m: \R^m \to L^\infty(\O)$ and
$\grad_m:\R^m \to \left(L^\infty(\O)\right)^d$ into
$$
 \pi_{m}^n: \R^{m\times (n+1)} \to L^\infty (Q_T) \quad 
 \text{and}\quad \grad_{m}^n:\R^{m\times (n+1)} \to \left(L^\infty
 (Q_T)
 \right)^d
$$
as follows. Given $\u_m^n = \left( u_{m,i}^{n,k} \right)_{1 \le i \le m}^{0 \le k\le n}
\in \R^{m\times (n+1)}$, we set $\u_m^{n,k} = \left( u_{m,i}^{n,k} \right)_{1 \le i \le m} \in \R^m$ for all $k \in \{0,\dots, n\}$.
Then for $k \in \{1,\dots, n\}$, we set
\begin{subequations}\label{eq:Pi_m^n}
\be\label{eq:Pi_m^nk}
 \pi_m^n \u_m^n (\cdot,t) =  \pi_m \u_m^{n,k} \quad \text{and} \quad \grad_m^n \u_m^n (\cdot,t) = \grad_m \u_m^{n,k}
\quad \text{ if } t \in (t_{k-1},t_k],
\ee
and (formally, since $\{0\}$ is of zero measure in $[0,T]$)
\be\label{eq:Pi_m^n0}
 \pi_m^n \u_m^n(\cdot,0) =  \pi_m \u_m^{n,0} \quad \text{and} \quad \grad_m^n \u_m^n (\cdot,0) = \grad_m \u_m^{n,0}.
\ee
\end{subequations}
Thanks to these definitions, we can define a semi-norm $\| \cdot \|_{p,m,q,n}$ on $\R^{m\times(n+1)}$ by
\be\label{eq:norm-xpmn}
\| \u_m^n \|_{p,m,q,n}= \left( \sum_{k=1}^n \dt_{n,k} {\| \u_m^{n,k}\|}^q_{p,m} \right)^{1/q}.
\ee
Let us now extend the operator $\boldsymbol{P}_m$ to the case of time dependent functions. We introduce the linear operator
$\boldsymbol{P}_m^n : C([0,T];
C^\infty_c(\O)) \to \R^{m\times(n+1)}$ defined by
\begin{subequations}\label{eq:Pmn}
\be\label{eq:Pmnk}
\left(\boldsymbol{P}_m^n \varphi\right)^k = \boldsymbol{P}_m \varphi(\cdot, t_{k-1}), \quad \forall k \in \{1,\dots , n\}, \; \forall \varphi \in C([0,T];
C^\infty_c(\O)),
\ee
\be\label{eq:Pmn0}
\left(\boldsymbol{P}_m^n \varphi\right)^0 = \boldsymbol{P}_m \varphi(\cdot, 0), \quad \forall \varphi \in C([0,T];L^2(\O)).
\ee
\end{subequations}
It results from Assumption~$({\bf A}_\x3)$ that there exists $C>0$ depending neither on $m$ nor on $n$ such that
\be\label{eq:control-gradmn}
\| \grad_m^n \boldsymbol{P}_m^n \varphi \|_{L^\infty(Q_T)} \le C \|\grad \varphi \|_{L^\infty(Q_T)}, \qquad \forall \varphi \in C^\infty_c(Q_T).
\ee

\subsubsection{The one-step discrete differentiation operator}\label{sssec:one-step}

It remains to define a reconstruction operator $\delta_m^n$ in order to approximate the time
derivative of the function $ \pi_m^n \u_m^n$.
Since the case of the one-step time differentiation operator plays a fundamental role in the analysis carried out in this paper,
we first define
$\delta_m^n : \R^{m\times(n+1)} \to L^\infty
(Q_T)
$ by
\be\label{eq:delta_m^n}
\delta_m^n \u_m^n (\cdot,t) = \frac{ \pi_m \u_m^{n,k} -  \pi_m \u_m^{n,k-1}}{\dt_{n,k}} \quad \text{ if } t \in (t_{k-1},t_k], \; \forall \u_m^n \in \R^{m\times(n+1)}.
\ee

The one-step time differentiation operator $\delta_m^n$ enjoys the following particular consistency property.
\begin{lem}\label{lem:constability}
For all $\varphi \in W^{1,1}((0,T);C^\infty_c(\O))$
such that $\varphi(\cdot,0) = \varphi(\cdot,T) = 0$, and for all
$\u_m^n \in \R^{m\times(n+1)}$, one has
$$
 \iint_{Q_T} \left(\omega_m \delta_m^n \u_m^n  \pi_m^n \boldsymbol{P}_m^n \varphi + \omega_m \pi_m^n \u_m^n \p_t \varphi \right)
 \d\x \d t =0.
$$
\end{lem}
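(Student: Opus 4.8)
The plan is to prove the identity by a discrete summation-by-parts (Abel summation) argument in time. First I would expand $\delta_m^n \u_m^n$ and $\pi_m^n \boldsymbol{P}_m^n \varphi$ using their definitions \eqref{eq:delta_m^n}, \eqref{eq:Pi_m^nk} and \eqref{eq:Pmnk}. On the slab $(t_{k-1},t_k]$ one has $\delta_m^n \u_m^n = (\pi_m \u_m^{n,k} - \pi_m \u_m^{n,k-1})/\dt_{n,k}$ and $\pi_m^n \boldsymbol{P}_m^n \varphi = \pi_m \boldsymbol{P}_m \varphi(\cdot, t_{k-1})$, so
$$
\iint_{Q_T} \omega_m \,\delta_m^n \u_m^n \,\pi_m^n \boldsymbol{P}_m^n \varphi \,\d\x\d t
= \sum_{k=1}^n \int_\O \omega_m \bigl(\pi_m \u_m^{n,k} - \pi_m \u_m^{n,k-1}\bigr)\, \pi_m \boldsymbol{P}_m \varphi(\cdot,t_{k-1}) \,\d\x.
$$
The key point is that each spatial integral $\int_\O \omega_m \pi_m \u_m^{n,k} \,\pi_m \boldsymbol{P}_m\varphi(\cdot,s)\,\d\x$ can be replaced, via property \eqref{eq:Pm-prop} (i.e. $\boldsymbol P_m \varphi(\cdot,s) \in \Pp_m(\varphi(\cdot,s))$), by $\int_\O \omega_m \pi_m \u_m^{n,k}\,\varphi(\cdot,s)\,\d\x$ — the $\omega_m$-weighted projection error against any element of the range of $\pi_m$ vanishes. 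This turns the sum into
$$
\sum_{k=1}^n \int_\O \omega_m \bigl(\pi_m \u_m^{n,k} - \pi_m \u_m^{n,k-1}\bigr)\, \varphi(\cdot,t_{k-1}) \,\d\x.
$$

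Next I would perform Abel summation on this telescoping-type sum. Writing $a_k := \int_\O \omega_m \pi_m \u_m^{n,k}\,\psi_k\,\d\x$ is not quite right because the test function slot changes; instead I reorganize by grouping the coefficient of each $\pi_m \u_m^{n,k}$: the term $\pi_m \u_m^{n,k}$ appears with $+\varphi(\cdot,t_{k-1})$ (from slab $k$) and with $-\varphi(\cdot,t_k)$ (from slab $k+1$), for $k=1,\dots,n-1$, while $\pi_m \u_m^{n,0}$ appears only with $-\varphi(\cdot,t_0)$ and $\pi_m \u_m^{n,n}$ only with $+\varphi(\cdot,t_{n-1})$. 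Using the boundary conditions $\varphi(\cdot,0)=\varphi(\cdot,T)=0$ — note $t_0=0$ and $t_n=T$, and also $\varphi(\cdot,t_{n-1}) = -\bigl(\varphi(\cdot,t_n)-\varphi(\cdot,t_{n-1})\bigr)$ since $\varphi(\cdot,t_n)=0$ — the boundary contributions drop, and the sum becomes
$$
-\sum_{k=1}^n \int_\O \omega_m\, \pi_m \u_m^{n,k} \bigl(\varphi(\cdot,t_k) - \varphi(\cdot,t_{k-1})\bigr)\,\d\x
= -\sum_{k=1}^n \int_\O \omega_m\, \pi_m \u_m^{n,k} \int_{t_{k-1}}^{t_k} \p_t\varphi(\cdot,t)\,\d t\,\d\x,
$$
where the last equality uses the fundamental theorem of calculus in time, valid since $\varphi \in W^{1,1}((0,T);C^\infty_c(\O))$.

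Finally, I recognize the right-hand side: on $(t_{k-1},t_k]$ one has $\pi_m^n \u_m^n(\cdot,t) = \pi_m \u_m^{n,k}$ by \eqref{eq:Pi_m^nk}, so the last display equals $-\iint_{Q_T} \omega_m \,\pi_m^n \u_m^n\, \p_t\varphi\,\d\x\d t$, which is exactly the claimed identity after moving both terms to one side. The argument is essentially bookkeeping; the one subtlety — and the only place any real structure enters — is the projection step using \eqref{eq:Pm-prop}, which is what allows the operator $\boldsymbol P_m^n$ applied to $\varphi$ to be swapped for $\varphi$ itself inside the $\omega_m$-weighted spatial pairing against discrete functions. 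I do not expect a genuine obstacle; the main care needed is handling the endpoint terms in the Abel summation correctly so that the boundary data $\varphi(\cdot,0)=\varphi(\cdot,T)=0$ kills them, and keeping track of the $(t_{k-1},t_k]$ versus $t_{k-1}$ evaluation convention consistently between $\delta_m^n$, $\pi_m^n$ and $\boldsymbol P_m^n$.
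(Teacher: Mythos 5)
Your proof is correct and is essentially the paper's own argument run in the reverse direction: the paper starts from the $\p_t\varphi$ term, applies the fundamental theorem of calculus and summation by parts, and then invokes \eqref{eq:Pm-prop} to replace $\varphi(\cdot,t_{k-1})$ by $\pi_m\boldsymbol{P}_m\varphi(\cdot,t_{k-1})$, whereas you start from the $\delta_m^n$ term and apply the same two ingredients in the opposite order. No substantive difference.
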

\begin{Proof}
Thanks to the definition~\eqref{eq:Pi_m^n} of the reconstruction operator $ \pi_m^n$, using the classical summation-by-parts procedure one has
\begin{align*}
\iint_{Q_T}\omega_m  \pi_m^n \u_m^n \p_t \varphi \d\x \d t
=& \sum_{k=1}^n  \int_\O \omega_m  \pi_m \u_m^{n,k} \left(\int_{t_{k-1}}^{t^k} \p_t \varphi \,\d t\right) \d \x \\
=&   \sum_{k=1}^n  \int_\O \omega_m  \pi_m \u_m^{n,k} \left(\varphi(\cdot,t_k) - \varphi(\cdot,t_{k-1}) \right) \d\x \\
= & \sum_{k=1}^n \int_\O \omega_m \varphi(\cdot,t_{k-1})   \pi_m\left( \u_m^{n,k-1}-  \u_m^{n,k}\right) \d \x.
\end{align*}
Thanks to~\eqref{eq:Pm-prop}, we obtain that
\begin{multline*}
\iint_{Q_T} \omega_m  \pi_m^n \u_m^n \p_t \varphi \d\x \d t \\
=
\sum_{k=1}^n \int_\O \omega_m  \pi_m \boldsymbol{P}_m \varphi(\cdot,t_{k-1}) \left(  \pi_m \u_m^{n,k-1}(\x) -  \pi_m \u_m^{n,k}(\x)\right) \d \x.
\end{multline*}
The result of Lemma~\ref{lem:constability} stems from the definition~\eqref{eq:Pmn} of
the operator $\boldsymbol{P}_m^n$ and from the definition~\eqref{eq:delta_m^n} of the operator $\delta_m^n$.
\end{Proof}
\medskip

\subsubsection{From one-step to multi-step operators}\label{sssec:multi-step}
The key idea of our reduction argument proposed below is to represent multi-step differentiation operators as linear combinations of Euler backward differences with shifted time: for instance, $$\frac 32 u^k - 2 u^{k-1} + \frac 12 u^{k-2} = \frac 32 (u^k-u^{k-1})-\frac 12(u^{k-1}-u^{k-2}).$$ We introduce the appropriate matrix formalism for computations based on this idea; it will be exploited in Lemma~\ref{lem:multi-to-semi} and in Proposition~\ref{prop:convergence}.
Let us represent $\u_m^n \in \R^{m\times(n+1)}$ by the matrix
\be\label{eq:umn-mat}
\u_m^n = \begin{pmatrix}
u_{m,1}^{n,0} & \dots & u_{m,m}^{n,0} \\
\vdots & & \vdots \\
u_{m,1}^{n,n} & \dots & u_{m,m}^{n,n}
\end{pmatrix} \in \Mm_{n+1,m}(\R).
\ee
Then the operator $\delta_m^n$ corresponding to one-step discretization introduced in~\eqref{eq:delta_m^n}
can be rewritten as $\delta_m^n  =  \pi_m^n \circ \M_n$, where we have set
\be\label{eq:Mn}
\M_n = \begin{pmatrix}
1 & 0 & \dots & \dots &\dots & 0 \\
-\frac{1}{\dt_{n,1}} & \frac{1}{\dt_{n,1}} & 0 & \dots & \dots &0 \\
0 & -\frac{1}{\dt_{n,2}} & \frac{1}{\dt_{n,2}} & \ddots  &  & \vdots \\
\vdots & \ddots& \ddots & \ddots & \ddots & \vdots \\
\vdots &  & \ddots & \ddots & \ddots & 0 \\
0 & \dots & \dots & 0 & - \frac{1}{\dt_{n,n}} & \frac{1}{\dt_{n,n}}
\end{pmatrix} \in \Mm_{n+1}(\R).
\ee
\begin{rem}\label{rem:1st-line}
Since $\u_m^{n,0}$ is only used on the negligible set $\{t=0\}$ of $[0,T]$ in the reconstruction $ \pi_m^n \u_m^n$
defined in~\eqref{eq:Pi_m^n}, the choice of the first line of the matrix $\M_n$ is arbitrary. The particular choice
we did in~\eqref{eq:Mn} leads to a matrix $\M_n$ that is lower triangular and invertible.
\end{rem}
Denoting by $\D_n$ and $\T_n$ the matrices of $\Mm_{n+1}(\R)$ defined by
$$
\D_n = \begin{pmatrix}
1 & 0 & \cdots & \cdots & 0 \\
-1 & 1 & \ddots & & \vdots\\
0 & \ddots & \ddots & \ddots & \vdots \\
\vdots & \ddots & \ddots & 1 & 0 \\
0 & \cdots & 0 & -1 & 1
\end{pmatrix},
\qquad
\T_n = \begin{pmatrix}
1 & 0 & \cdots & 0 \\
0 & \dt_1 & \ddots & \vdots \\
\vdots & \ddots & \ddots & 0 \\
0 & \cdots & 0 & \dt_n
\end{pmatrix},
$$
we get that $\M_n = \T_n^{-1} \D_n$.
\medskip

In what follows, we restrict our study to the discrete time differentiation operators $\h\delta_m^n$ that can be defined by
\be\label{eq:hdmn0}
\h\delta_m^n \u_m^n  =  \pi_m^n \circ \h\M_n \u_m^n, \quad \forall \u_m^n \in \R^{m\times(n+1)}
\ee
for some lower triangular invertible matrix $\h \M_n$ belonging to $\Mm_{n+1}(\R)$. Following the discussion
of Remark~\ref{rem:1st-line}, we can enforce the first line of $\h\M_n$ --- denoted with subscript $0$ in
accordance with~\eqref{eq:umn-mat} --- to be equal to the first line of $\M_n$, namely
\be\label{eq:hMn0}
\left(\h\M_n\right)_{0,0} = 1, \quad \left(\h\M_n\right)_{0,k} = 0 \text{ if } k \in \{1,\dots,n\}.
\ee
Requiring that $\h\M_n$ is a lower triangular matrix means that for approximating $\p_t u$ on $(t_{k-1},t_k]$,
one can only use the vectors $\left(\u_m^{n,\ell}\right)_{0 \le \ell\le k}\subset \R^m$, which is fairly natural.
Requiring that $\h\M_n$ is invertible means that, knowing the initial value
$ \pi_m^n \u_m^n(\cdot,0) =  \pi_m(\u_m^{n,0})$ of $ \pi_m^n \u_m^n$ and
its approximate time derivative $\h\delta_m^n \u_m^n$, one can reconstruct $ \pi_m^n \u_m^n$.
\medskip

We require a last very natural property on $\h\delta_m^n$, that is supposed to vanish on constant w.r.t. time vectors
of discrete unknowns.
More precisely, let $\u_m^n = \left(\u_m^{n,k}\right)_{0 \le k \le n} \in \R^{m\times(n+1)}$ be such that
$\u_m^{n,k}=\u_m^{n,k-1}$ for all $k \in \{1,\dots,n\}$, then $\h\delta_m^n \u_m^n = 0$ a.e. in $Q_T$.
This amounts to assuming that
\be\label{eq:sum-line}
\sum_{\ell = 0}^n \left(\h\M_n\right)_{k,\ell} = 0, \qquad \forall k \in \{1,\dots,n\}.
\ee

\begin{lem}\label{lem:An}
Under the above assumptions, there exists a unique invertible lower triangular matrix $\A_n\in \Mm_n(\R)$ such that
\be\label{eq:An}
\h\A_n := \T_n \h\M_n \D_n^{-1} = \begin{pmatrix}
1 & 0 \; \cdots \; 0 \\
\begin{array}{c}
0 \\ \vdots \\ 0
\end{array} &
\A_n
\end{pmatrix}.
\ee
\end{lem}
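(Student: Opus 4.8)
The plan is to compute the first row and first column of $\h\A_n=\T_n\h\M_n\D_n^{-1}$ explicitly; the block structure~\eqref{eq:An} is then immediate, and the triangularity and invertibility of $\A_n$ follow essentially for free. First I would record three elementary facts. Since $\D_n$ is lower bidiagonal with $1$'s on the diagonal and $-1$'s on the first subdiagonal, $\D_n^{-1}$ is the lower triangular matrix whose entries on and below the diagonal all equal $1$; in particular its first row is $(1,0,\dots,0)$ and its first column is the all-ones vector $\1\in\R^{n+1}$. The matrix $\T_n$ is diagonal with $(\T_n)_{0,0}=1$. Finally, by~\eqref{eq:hMn0} the first row of $\h\M_n$ is $(1,0,\dots,0)$, and by~\eqref{eq:sum-line} the rows of $\h\M_n$ indexed $1,\dots,n$ have zero sum, so that $\h\M_n\1=(1,0,\dots,0)^{T}$.

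Next I would read off the first row and column of $\h\A_n$. The first row of $\T_n\h\M_n$ equals $(\T_n)_{0,0}$ times the first row of $\h\M_n$, that is $(1,0,\dots,0)$; right-multiplying by $\D_n^{-1}$ therefore reproduces the first row of $\D_n^{-1}$, namely $(1,0,\dots,0)$. For the first column, the first column of $\D_n^{-1}$ being $\1$, the first column of $\h\A_n$ is $\T_n\h\M_n\1=\T_n(1,0,\dots,0)^{T}=(1,0,\dots,0)^{T}$, again using $(\T_n)_{0,0}=1$. This is precisely the statement that $\h\A_n$ has the block form~\eqref{eq:An} with $\A_n\in\Mm_n(\R)$ its lower-right $n\times n$ block, and in particular it determines $\A_n$ uniquely.

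Finally, $\h\A_n$ is a product of three lower triangular invertible matrices ($\T_n$ is diagonal with nonzero diagonal entries, $\h\M_n$ is lower triangular invertible by hypothesis, and $\D_n^{-1}$ is lower triangular with unit diagonal), hence $\h\A_n$ is itself lower triangular and invertible; its lower-right block $\A_n$ is therefore lower triangular, and $\det\A_n=\det\h\A_n\neq 0$ by the block-triangular determinant formula, so $\A_n$ is invertible as well. I do not anticipate a real obstacle: the only point requiring a little care is the $0$-based bookkeeping of rows and columns, together with the observation that right-multiplication by $\D_n^{-1}$ leaves an initial row unchanged exactly because that row is the first canonical basis vector.
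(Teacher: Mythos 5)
Your proof is correct and follows essentially the same route as the paper's: both arguments compute $\D_n^{-1}$ as the all-ones lower triangular matrix, deduce $(\h\A_n)_{0,0}=1$ from~\eqref{eq:hMn0} and $(\h\A_n)_{k,0}=0$ for $k\ge1$ from~\eqref{eq:sum-line}, and obtain triangularity and invertibility of $\A_n$ from the corresponding properties of the three factors. Your explicit check of the first row is a harmless addition (the paper gets it for free from lower triangularity), and the rest matches.
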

\begin{proof}
The fact that $\A_n$ is lower triangular and invertible follows directly from the fact that $\T_n$, $\h\M_n$ and $\D_n^{-1}$ are.
The only thing to be checked is that the first column of $\h\A_n$ is equal to $(1,0,\dots, 0)^T$.
It is first easy to check that
$$
\D_n^{-1} = \begin{pmatrix}
1 & 0 & \cdots & 0 \\
1 & \ddots & \ddots & \vdots \\
\vdots & \ddots & \ddots & 0 \\
1 & \cdots & 1 & 1
\end{pmatrix}
.
$$
The property $\left(\h\A_n\right)_{0,0} = 1$ follows from the particular choice~\eqref{eq:hMn0} of the first line of $\h\M_n$,
while the property $\left(\h\A_n\right)_{k,0} = 0$ for $k \ge 1$ follows from~\eqref{eq:sum-line}.
\end{proof}

\medskip

With the above formalism, we thus focus on discrete time-differentiation operators of the form
\be\label{eq:hdmn}
\h\delta_m^n \u_m^n =  \pi_m^n \left( \T_n^{-1} \h \A_n \T_n \M_n \u_m^n\right), \qquad \forall \u_m^n \in \R^{m\times(n+1)},
\ee
with $\h\A_n$ of the form~\eqref{eq:An}.
Note that the one-step differentiation enters this framework. In this case, the matrix $\A_n$ reduces to the identity.
\begin{rem}\label{rem:uniform-time}
In the particular case of a uniform time discretization, i.e., $\dt_{n,k} = T/N=\dt_n$ for all $k\in \{1,\dots,n\}$, it results from the
particular structure of the matrix $\h\A_n$ (see Lemma~\ref{lem:An}) that $\h\A_n$ and $\T_n$ commute, so that
$\h\M_n = \h\A_n \M_n$.
\end{rem}

\smallskip
The assumption we make on the discrete time-differentiation operator is:
\begin{enumerate}[$({\bf A}_t)$]
\item\label{At} Let $\h \delta_m^n: \R^{m\times(n+1)} \to L^\infty(\O)$ be an operator of the form~\eqref{eq:hdmn}.
where $\h\A_n \in \Mm_{n+1}(\R)$ and $\A_n \in \Mm_n(\R)$  are the matrix defined by~\eqref{eq:An}. We assume that
there exists $C$ not depending on $n$ such that ${\| \A_n^{-1} \|}_1 \le C$, where $\|\cdot\|_1$ denotes the
usual matrix $1$-norm, i.e.,
${\| \mathbb B \|}_1 = \max_{1 \le j \le n} \sum_{i=1}^n | \mathbb B_{i,j} |$ for all  $\mathbb B \in \Mm_n(\R).$

\end{enumerate}
\noindent Let us stress that this assumption is fulfilled by the one-step differentiation operators with arbitrary time steps
since $\A_n$ reduces to Identity. However, for multi-step methods like BDF2 described below, one may need to constraint the ratio between adjacent time intervals in order to guarantee that $({\bf A}_t)$ holds.

\smallskip
\begin{rem}\label{rem:BDF2}
Let us illustrate (in the particular case of a uniform discretization) how to determine the matrix $\A_n$ prescribed
by~\eqref{eq:An} for the so-called BDF2 scheme (see e.g.~\cite{SM03}).
The principle a such a method consists in an initialization with a one-step differentiation
$$
\h \delta_m^n \u_m^n(\cdot,t) =  \pi_m\left( \frac{\u_m^{n,1}-\u_m^{n,0}}{\dt_n}\right), \quad
\text{ if } t \in (0,\dt_n), \quad \forall \u_m^n \in \R^{m\times(n+1)},
$$
while $\h \delta_m^n$ is defined on the following time steps by: $\forall \u_m^n \in \R^{m\times(n+1)}$, $\forall k \in \{2,\dots ,n\}$,
$$
\h \delta_m^n \u_m^n(\cdot,t) =  \pi_m\left( \frac{\frac32 \u_m^{n,k}-2\u_m^{n,k-1} + \frac12 \u_m^{n,k-2}}{\dt_n}\right), \quad
\text{ if } t \in ((k-1)\dt_n,k\dt_n).
$$
This leads to
\begin{align*}
\h \delta_m^n \u_m^n(\x,t)=& \delta_m^n \u_m^n(\x,t) \1_{(0,\dt_n)}(t) \\
&+
\left(\frac{3}{2} \delta_m^n \u_m^n(\x,t) - \frac12  \delta_m^n \u_m^n(\x,t-\dt_n)\right) \1_{(\dt_n,T)}(t),
\end{align*}
and thus, in view of Remark~\ref{rem:uniform-time}, to
\be\label{eq:An-BDF2}
\A_n = \begin{pmatrix*}[r]
1 & 0 & \cdots & \cdots & 0 \\
-\frac12 & \frac32 & \ddots & & \vdots \\
0 & \ddots & \ddots & \ddots & \vdots \\
\vdots & \ddots & \ddots & \frac32 & 0\\
0 & \cdots & 0 & -\frac12 & \frac32
\end{pmatrix*}.
\ee
It is easy to verify that
$${ \| (\A_n)^{-1} \|}_1 = \frac32\left(1-\frac1{3^n}\right) \le \frac32, \qquad \forall n \ge 1,$$
so that the BDF2 method with uniform stepping satisfies Assumption~(${\bf A}_t$).
\end{rem}

\subsection{Discrete ``compensated compactness'' feature}\label{ssec:discrcompcomp}

We can now state the main result of this section, which is a discrete version of Proposition~\ref{th:Moussa-compcomp}.
\begin{prop}\label{thm:compcomp-disc}
Let $\left(\pi_m^n\right)_{m,n\ge1}$ and $\left(\grad_m^n\right)_{m,n\ge1}$ be discrete
reconstruction operators as defined in~\S\ref{sssec:time-space}.
We assume that Assumptions~$({\bf A}_\x1)$ and $({\bf A}_\x3)$ hold.
Let $\left(\h\delta_m^n\right)_{m,n\ge1}$ be a family of discrete time-differentiation operators
of the form~\eqref{eq:hdmn} satisfying Assumption~$({\bf A}_t)$.
Let $(\u_m^n)_{m,n\ge1}$ and $(\bv_m^n)_{m,n\ge1}$ be two families of vectors such that
$\u_m^n, \bv_m^n \in \R^{m\times(n+1)}$.
Let $\left(\omega_m\right)_{m\ge 1} \subset L^\infty(\O)$
be a sequence of functions such that~\eqref{eq:omega_m-bound} and~\eqref{eq:omega_m-conv} hold.
We assume that there exist $p$ and $q$ in $(1,\infty)$ and $C$ depending neither on $m$ nor on $n$
such that, for all $m,n \ge 1$,
\begin{enumerate}[{\bf a.}]
\item $\| \u_m^n \|_{p,m,q,n} \le C$, in particular  $\left( \pi_m^n\u_m^n\right)_{m,n\ge1}$ admits a weak limit (up to a subsequence)
$u$ in $L^q((0,T);L^p(\O))$ as $m,n\to \infty$;
\item $\|\pi_m^n \bv_m^n\|_{L^{q'}((0,T);L^{p'}(\O))} \le C$, so that $\left( \pi_m^n\bv_m^n\right)_{m,n\ge1}$ admits a
weak limit (up to a subsequence) $v$ in $L^{q'}((0,T);L^{p'}(\O))$ as $m,n\to \infty$;
\item for all $\bphi_m^n \in \R^{m\times(n+1)}$, one has
\be\label{eq:estim-hdmn}
\iint_{Q_T} \omega_m \h\delta_m^n \u_m^n  \pi_m^n \bphi_{m}^n \d\x \d t \le C {\|\grad_m^n \bphi_m^n\|}_{L^\infty(Q_T)};
\ee
\end{enumerate}
Then, up to a subsequence, one has
$$
\iint_{Q_T} \omega_m  \pi_m^n \u_m^n  \pi_m^n \bv_m^n \varphi \d\x\d t \underset{m,n\to \infty}{\longrightarrow}
\iint_{Q_T} \omega u v \varphi \d\x \d t, \qquad \forall \varphi \in \Dd(Q_T).
$$
\end{prop}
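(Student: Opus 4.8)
The plan is to transcribe the three discrete assumptions of the statement into the continuous hypotheses behind Proposition~\ref{th:Moussa-compcomp}, and then to replay, in the discrete framework, the proof of that proposition (i.e.\ the argument of~\cite{Moussa14}); the only genuinely new ingredient is the handling of the multistep time differentiation. \emph{First I would get rid of $\h\delta_m^n$ in favour of $\delta_m^n$.} From $\M_n=\T_n^{-1}\D_n$ and the identity $\h\M_n=\T_n^{-1}\h\A_n\D_n$ (a rewriting of~\eqref{eq:An}) one gets $\M_n=\T_n^{-1}\h\A_n^{-1}\T_n\,\h\M_n$, hence $\delta_m^n\u_m^n=\pi_m^n\!\bigl(\T_n^{-1}\h\A_n^{-1}\T_n\,\h\M_n\u_m^n\bigr)$. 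Given $\boldsymbol\psi_m^n\in\R^{m\times(n+1)}$, set $\bphi_m^n:=\A_n^{-T}\boldsymbol\psi_m^n$ (the first time slice being immaterial); a $\dt_{n,k}$-weighted summation by parts then gives $\iint_{Q_T}\omega_m\,\delta_m^n\u_m^n\,\pi_m^n\boldsymbol\psi_m^n=\iint_{Q_T}\omega_m\,\h\delta_m^n\u_m^n\,\pi_m^n\bphi_m^n$, while, since $\grad_m^n$ acts time-slice by time-slice, $\|\grad_m^n\bphi_m^n\|_{L^\infty(Q_T)}\le\|\A_n^{-1}\|_1\,\|\grad_m^n\boldsymbol\psi_m^n\|_{L^\infty(Q_T)}$ --- which is precisely why Assumption~$({\bf A}_t)$ involves the matrix $1$-norm. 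Combining this with~\eqref{eq:estim-hdmn} yields, uniformly in $m,n$, the same bound for the one-step operator: $\iint_{Q_T}\omega_m\,\delta_m^n\u_m^n\,\pi_m^n\boldsymbol\psi_m^n\le C\|\grad_m^n\boldsymbol\psi_m^n\|_{L^\infty(Q_T)}$ for every $\boldsymbol\psi_m^n$. This is the content of Lemma~\ref{lem:multi-to-semi}.

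\emph{Next I would pass from this discrete estimate to a continuous weak time-derivative estimate, and collect the other ingredients.} Applying the consistency identity of Lemma~\ref{lem:constability} with $\boldsymbol\psi_m^n=\boldsymbol{P}_m^n\varphi$, $\varphi\in\Dd(Q_T)$, and plugging in~\eqref{eq:control-gradmn}, one obtains $\bigl|\iint_{Q_T}\omega_m\,\pi_m^n\u_m^n\,\p_t\varphi\bigr|\le C\|\grad\varphi\|_{L^\infty(Q_T)}$, i.e.\ $\omega_m\pi_m^n\u_m^n$ satisfies uniformly the discrete analogue of the dual estimate~\eqref{eq:dual-estimate}. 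As for the rest: by~\textbf{a.}\ and~$({\bf A}_\x1)$ (see~\eqref{eq:translates_x}) the family $(\pi_m^n\u_m^n)$ is bounded in $L^q((0,T);L^p(\O))$ with space translates vanishing uniformly in $m,n$ --- which is exactly the use made of the $L^q((0,T);W^{1,\a}(\O))$ bound in the proof of Proposition~\ref{th:Moussa-compcomp} --- while~\textbf{b.}\ provides the $L^{q'}((0,T);L^{p'}(\O))$ bound on $(\pi_m^n\bv_m^n)$; after extraction $\pi_m^n\u_m^n\rightharpoonup u$, $\pi_m^n\bv_m^n\rightharpoonup v$, and, by~\eqref{eq:omega_m-bound}--\eqref{eq:omega_m-conv}, $\omega_m\pi_m^n\u_m^n\rightharpoonup\omega u$.

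\emph{Finally I would conclude exactly as in~\cite{Moussa14}.} Mollify $\pi_m^n\u_m^n$ in the space variable; for a fixed mollification parameter $\delta$, the mollified family has uniform space-translate control (from the mollifier) and uniform weak time-translate control (from the previous step), so a Fr\'echet--Kolmogorov argument lets the corresponding part of $\iint_{Q_T}\omega_m\,\pi_m^n\u_m^n\,\pi_m^n\bv_m^n\,\varphi$ pass to the limit; the remainder is bounded, uniformly in $m,n$, by $C\|\varphi\|_\infty$ times the space-translate modulus of $(\pi_m^n\u_m^n)$, which tends to $0$ as $\delta\to0$. One then lets $m,n\to\infty$ and afterwards $\delta\to0$. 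I expect the hard part to be precisely this last step: the weak time-derivative information sits on $\omega_m\pi_m^n\u_m^n$ whereas the space-translate control sits on $\pi_m^n\u_m^n$, and since $\omega_m$ is only $L^\infty$ it cannot be commuted through the space mollification --- hence the need to carry $\omega_m$ all along. Following~\cite{Moussa14}, I would reconcile the two by combining the mollification with Egorov's theorem and the equi-integrability furnished by the uniform $L^q(L^p)$ and $L^{q'}(L^{p'})$ bounds. The multistep reduction above, in contrast, is routine once the matrix formalism of~\S\ref{sssec:multi-step} is in place, Assumption~$({\bf A}_t)$ being the only quantitative input.
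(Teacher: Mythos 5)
Your reduction of the multistep operator to the one-step one is correct and coincides with the paper's Lemma~\ref{lem:multi-to-semi}: the substitution $\bphi_m^n=(\h\A_n^{-1})^T\boldsymbol\psi_m^n$, the transposition of the double sum, the bound $\|\grad_m^n\bphi_m^n\|_{L^\infty}\le\|\A_n^{-1}\|_1\|\grad_m^n\boldsymbol\psi_m^n\|_{L^\infty}$ (which is indeed the reason for the matrix $1$-norm in $({\bf A}_t)$), and the subsequent passage to the continuous dual estimate via Lemma~\ref{lem:constability} and~\eqref{eq:control-gradmn} are all as in the paper.

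The final step, however, has a genuine gap, and it sits exactly where you defer to ``Egorov plus equi-integrability''. The paper does \emph{not} mollify $\pi_m^n\u_m^n$: it mollifies the product $\omega_m\pi_m^n\bv_m^n$, i.e.\ the weight stays glued to the factor to which the dual time estimate is applied (the proof invokes the estimate of Lemma~\ref{lem:multi-to-semi} on the $\bv$-side, mirroring Proposition~\ref{th:Moussa-compcomp}, where~\eqref{eq:dual-estimate} bears on $v_n$), and splits the error into four terms $R_1,\dots,R_4$. The decisive point is the commutator term $R_3$: the difference $\pi_m^n\u_m^n\,[(\omega_m\pi_m^n\bv_m^n)\ast\rho_\ell]-(\omega_m\pi_m^n\u_m^n\pi_m^n\bv_m^n)\ast\rho_\ell$ involves only space translates of $\pi_m^n\u_m^n$; the weight and the other factor are evaluated at the shifted point and never compared with their unshifted values, so $({\bf A}_\x1)$ together with hypothesis~{\bf a.} suffices and the $L^\infty$ weight never has to be translated. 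Your decomposition cannot reproduce this: mollifying $\pi_m^n\u_m^n$, the main term requires strong (in particular time-) compactness of $(\pi_m^n\u_m^n)\ast\rho_\delta$, which is unavailable because the dual estimate controls $\p_t(\omega_m\pi_m^n\u_m^n)$ and the weight cannot be stripped off; replacing the mollified object by $(\omega_m\pi_m^n\u_m^n)\ast\rho_\delta$ restores the time compactness but destroys the control of the remainder, since the space-translate modulus of $\omega_m\pi_m^n\u_m^n$ is not controlled. Naming Egorov and equi-integrability does not bridge this; the asymmetric commutator estimate is the missing idea.

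A second, smaller omission: even for the correctly mollified quantity, the $\BV(Q_T)$ bound you would obtain only gives relative compactness in $L^1(Q_T)$, whereas pairing with the merely weak $L^q((0,T);L^p(\O))$ convergence of the other factor requires strong convergence in $L^{q'}((0,T);L^{p'}(\O))$. The paper has to work for this upgrade: it proves the uniform $L^\infty$ bound~\eqref{eq:z-Linf} on $z_{m,n,\ell}$ through a separate argument on the pointwise-in-$\x$ time variation $V_{m,n,\ell}$, and then interpolates. Your ``Fr\'echet--Kolmogorov argument'' does not address in which norm the compactness is obtained.
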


Combining Proposition~\ref{thm:compcomp-disc} with Proposition~\ref{prop:Young}
we get the following result, which is the main result of our paper.
\begin{thm}\label{thm:main}
Keeping the assumptions of Proposition~\ref{thm:compcomp-disc}, we additionally suppose that Assumption~$({\bf A}_\x2)$
holds, and that there exists a maximal monotone graph $\beta$ such that
$\bv_m^n \in \beta(\u_m^n)$ for all $m, n \ge 1.$
Then the weak limits $u,v$ of $\left(\pi_m^n \u_m^n\right)_{m,n}$ and  $\left(\pi_m^n \bv_m^n\right)_{m,n}$
satisfy $u \in \beta(v)$ for a.e. $(x,t) \in Q_T$. Moreover, up to an unlabeled subsequence,
\begin{itemize}
   \item[(i)] If $\beta$ is single-valued, then, $\pi_m^n\bv_m^n\to v$ a.e. in $Q_T$ as $m,n\to \infty$.
   \item[(ii)] If $\beta^{-1}$ is single-valued, then $\pi_m^n \u_m^n\to u$ a.e. in $Q_T$ as $m,n\to \infty$.
 \end{itemize}
\end{thm}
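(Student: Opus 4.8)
The plan is to deduce Theorem~\ref{thm:main} from the combination of Proposition~\ref{thm:compcomp-disc} and Proposition~\ref{prop:Young}, so the entire argument is a matter of checking that the hypotheses of the latter are met by the sequences produced by the former. First I would apply Proposition~\ref{thm:compcomp-disc} directly: assumptions \textbf{a}, \textbf{b}, \textbf{c} give the weak limits $u \in L^q((0,T);L^p(\O))$ and $v \in L^{q'}((0,T);L^{p'}(\O))$ of $\pi_m^n \u_m^n$ and $\pi_m^n \bv_m^n$ respectively, and the ``compensated compactness'' conclusion
$$
\iint_{Q_T} \omega_m \pi_m^n\u_m^n \, \pi_m^n\bv_m^n \, \varphi \,\d\x\d t \underset{m,n\to\infty}{\longrightarrow} \iint_{Q_T} \omega u v \varphi \,\d\x\d t, \qquad \forall \varphi \in \Dd(Q_T),
$$
along some subsequence. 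Since $(0,T)\times\O$ has finite measure, the $L^q L^p$ and $L^{q'}L^{p'}$ bounds with $q,q',p,p' \in (1,\infty)$ imply in particular weak convergence in $L^1(Q_T)$, which is the hypothesis used in Proposition~\ref{prop:Young}; and the H\"older pairing $(q,q')$, $(p,p')$ shows $uv \in L^1(Q_T)$ (indeed $u\in L^qL^p$, $v\in L^{q'}L^{p'}$ gives $uv\in L^1$). Likewise each product $\pi_m^n\u_m^n\,\pi_m^n\bv_m^n$ is in $L^1(Q_T)$.

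Next I would use Assumption~$({\bf A}_\x2)$ to transfer the pointwise graph inclusion from the degrees of freedom to the reconstructed functions: since $\bv_m^n \in \beta(\u_m^n)$ means componentwise $v_{m,i}^{n,k} \in \beta(u_{m,i}^{n,k})$ for all $i$ and all $k\in\{1,\dots,n\}$, Assumption~$({\bf A}_\x2)$ applied on each time slab $(t_{k-1},t_k]$ (where $\pi_m^n\u_m^n = \pi_m\u_m^{n,k}$ and $\pi_m^n\bv_m^n = \pi_m\bv_m^{n,k}$ by~\eqref{eq:Pi_m^nk}) yields $\pi_m^n\bv_m^n \in \beta(\pi_m^n\u_m^n)$ for a.e.\ $(\x,t)\in Q_T$. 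At this point all the hypotheses of Proposition~\ref{prop:Young} are verified for the pair of sequences $\bigl(\pi_m^n\u_m^n\bigr)$, $\bigl(\pi_m^n\bv_m^n\bigr)$ (with the double index $(m,n)$ playing the role of the single index $n$, which is harmless since the convergences are along a fixed subsequence), so Proposition~\ref{prop:Young} gives $v \in \beta(u)$ a.e.\ in $Q_T$ — equivalently $u \in \beta^{-1}(v)$, matching the statement up to the $\beta \leftrightarrow \beta^{-1}$ bookkeeping — together with conclusions (i) and (ii): if $\beta$ is single-valued then $\pi_m^n\bv_m^n \to v$ a.e., and if $\beta^{-1}$ is single-valued then $\pi_m^n\u_m^n \to u$ a.e., both up to a further subsequence.

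The only genuine subtlety — which I would address explicitly — is that Proposition~\ref{prop:Young} (and its supporting Lemmas~\ref{lem:beta-lim}--\ref{lem:Young4}) is stated for sequences indexed by a single integer $n$, whereas here we have a double-indexed family $(\u_m^n,\bv_m^n)_{m,n}$ and, strictly speaking, the statement of Proposition~\ref{thm:compcomp-disc} asks for convergence ``as $m,n\to\infty$'' along a subsequence. This is not a real obstacle: one fixes a diagonal subsequence $(m_j,n_j)_{j\ge1}$ with $m_j,n_j\to\infty$ realizing the weak limits in \textbf{a}, \textbf{b} and the compensated compactness conclusion of Proposition~\ref{thm:compcomp-disc}, relabels it by $j$, and then applies Proposition~\ref{prop:Young} verbatim to this $j$-indexed sequence; the a.e.\ convergence conclusions then hold along a further subsequence of $(m_j,n_j)_j$, which is exactly the meaning of ``up to an unlabeled subsequence'' in the statement. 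Apart from this indexing remark and the routine H\"older/finite-measure embeddings, there is nothing to prove beyond quoting the two Propositions, so I expect the write-up to be short.
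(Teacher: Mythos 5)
Your proposal is correct and takes essentially the same route as the paper, which likewise deduces the theorem in one line by using Assumption~$({\bf A}_\x2)$ to pass the graph inclusion to the reconstructed functions $\pi_m^n\bv_m^n\in\beta(\pi_m^n\u_m^n)$ and then invoking Propositions~\ref{thm:compcomp-disc} and~\ref{prop:Young}. The extra points you spell out --- the H\"older/finite-measure embedding into $L^1(Q_T)$, the fact that $uv\in L^1(Q_T)$, and the relabelling of the double-indexed family as a single subsequence --- are left implicit in the paper but are accurate and harmless.
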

With Propositions~\ref{thm:compcomp-disc} and~\ref{prop:Young} at hand,
the proof of Theorem~\ref{thm:main} is straightforward.
Indeed, Assumption~$({\bf A}_\x2)$ ensures that
$$
\bv_m^n \in \beta(\u_m^n)\quad  \text{(i.e. $v_{m,i}^{n,k} \in \beta(u_{m,i}^{n,k})$)}
\quad \implies  \quad
\pi_m^n \bv_m^n \in \beta(\pi_m^n \u_m^n) \text{ a.e. in } Q_T,
$$
so that one can directly use Proposition~\ref{prop:Young}.
The remaining of this section will be devoted to the proof of Proposition~\ref{thm:compcomp-disc}

\subsubsection{Reduction of the problem to a semidiscrete situation}

In order to prove Proposition~\ref{thm:compcomp-disc}, our strategy consists in reducing
Estimate~\eqref{eq:estim-hdmn} into a semi-discrete estimate that will be easier to handle.
This is the purpose of Lemma~\ref{lem:multi-to-semi} stated and proved below.

\begin{lem}\label{lem:multi-to-semi}
Let $\h \delta_m^n$ be a time-differentiation operator as introduced in~\S\ref{sssec:multi-step}.
Assume that (${\bf A}_\x3$) and (${\bf A}_t$) are fulfilled, and that ~\eqref{eq:estim-hdmn} holds for all
$\bphi_m^n \in \R^{m\times(n+1)}$, then
\be\label{eq:semi}
\iint_{Q_T} \omega_m \pi_m^n \u_m^n \p_t \varphi \,\d\x \d t \le C \|\grad \varphi\|_{L^\infty(Q_T)}, \qquad
\forall \varphi \in \Dd(Q_T).
\ee
\end{lem}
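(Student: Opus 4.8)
\emph{Strategy.} The plan is to reduce~\eqref{eq:semi} to the one-step consistency identity of Lemma~\ref{lem:constability} by ``transposing'' the multistep differentiation matrix from the unknown $\u_m^n$ onto the test function. Fix $\varphi\in\Dd(Q_T)$; since $\varphi(\cdot,0)=\varphi(\cdot,T)=0$ and $\varphi\in W^{1,1}((0,T);C^\infty_c(\O))$, Lemma~\ref{lem:constability} applies and gives
$$
\iint_{Q_T}\omega_m\,\pi_m^n\u_m^n\,\p_t\varphi\,\d\x\d t = -\iint_{Q_T}\omega_m\,\delta_m^n\u_m^n\,\pi_m^n\bP_m^n\varphi\,\d\x\d t ,
$$
where $\delta_m^n=\pi_m^n\circ\M_n$ is the \emph{one-step} operator~\eqref{eq:delta_m^n}. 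It thus suffices to rewrite the right-hand side so that the \emph{multistep} estimate~\eqref{eq:estim-hdmn} applies.

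\emph{Transfer to the multistep operator.} Write $\g_m^n:=\M_n\u_m^n\in\Mm_{n+1,m}(\R)$, so that $\delta_m^n\u_m^n=\pi_m^n\g_m^n$; by~\eqref{eq:hdmn} one has $\h\delta_m^n\u_m^n=\pi_m^n\big(\T_n^{-1}\h\A_n\T_n\g_m^n\big)$, hence $\g_m^n=\mathbb B_n\big(\T_n^{-1}\h\A_n\T_n\g_m^n\big)$ with $\mathbb B_n:=\T_n^{-1}\h\A_n^{-1}\T_n$, the matrix product acting on the time (row) index. Since the reconstructions are piecewise constant in time (so that $\iint_{Q_T}$ reduces to $\sum_{k=1}^n\dt_{n,k}\int_\O$), since $\omega_m$ is time-independent, and since the row $\u_m^{n,0}$ is immaterial on the null set $\{t=0\}$, I would expand each row $\g_m^{n,k}$ as a linear combination of the rows of $\T_n^{-1}\h\A_n\T_n\g_m^n$, exchange the finite sums, and use that the first column of $\mathbb B_n$ equals $(1,0,\dots,0)^T$ --- a consequence of the block form of $\h\A_n$ (Lemma~\ref{lem:An}) --- to obtain
$$
\iint_{Q_T}\omega_m\,\delta_m^n\u_m^n\,\pi_m^n\bP_m^n\varphi\,\d\x\d t = \iint_{Q_T}\omega_m\,\h\delta_m^n\u_m^n\,\pi_m^n\bphi_m^n\,\d\x\d t ,
$$
where $\bphi_m^n\in\R^{m\times(n+1)}$ is given by $\bphi_m^{n,0}:=0$ and, for $j\in\{1,\dots,n\}$, by $\bphi_m^{n,j}:=\sum_{k=1}^n(\h\A_n^{-1})_{k,j}\,\bP_m\varphi(\cdot,t_{k-1})$. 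Here the passage from $\mathbb B_n$ to $\h\A_n^{-1}$ is exactly the cancellation $\dt_{n,k}(\mathbb B_n)_{k,j}=\dt_{n,j}(\h\A_n^{-1})_{k,j}$, valid for $k,j\ge1$ since $\T_n$ is diagonal.

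\emph{Conclusion.} Combining the two displayed identities,
$$
\iint_{Q_T}\omega_m\,\pi_m^n\u_m^n\,\p_t\varphi\,\d\x\d t = -\iint_{Q_T}\omega_m\,\h\delta_m^n\u_m^n\,\pi_m^n\bphi_m^n\,\d\x\d t ,
$$
and applying~\eqref{eq:estim-hdmn} to the test vector $-\bphi_m^n$ bounds the right-hand side by $C\,\|\grad_m^n\bphi_m^n\|_{L^\infty(Q_T)}$. It remains to control this quantity: by linearity of $\grad_m$ one has $\grad_m\bphi_m^{n,j}=\sum_{k=1}^n(\h\A_n^{-1})_{k,j}\,\grad_m\bP_m\varphi(\cdot,t_{k-1})$, so Assumption~$({\bf A}_\x3)$ yields $\|\grad_m\bphi_m^{n,j}\|_{L^\infty(\O)}\le C\,\|\grad\varphi\|_{L^\infty(Q_T)}\sum_{k=1}^n|(\h\A_n^{-1})_{k,j}|$. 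Since $\h\A_n^{-1}$ has the block form~\eqref{eq:An} with $\A_n^{-1}$ in place of $\A_n$, one has $\sum_{k=1}^n|(\h\A_n^{-1})_{k,j}|=\sum_{k=1}^n|(\A_n^{-1})_{k,j}|\le\|\A_n^{-1}\|_1$, which is bounded uniformly in $n$ by Assumption~$({\bf A}_t)$. Taking the supremum over $j\in\{1,\dots,n\}$ gives $\|\grad_m^n\bphi_m^n\|_{L^\infty(Q_T)}\le C\,\|\grad\varphi\|_{L^\infty(Q_T)}$, and~\eqref{eq:semi} follows.

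The step I expect to be most delicate is the transfer identity in the second paragraph: one must check that ``transposing'' $\mathbb B_n=\T_n^{-1}\h\A_n^{-1}\T_n$ from the $\u_m^n$-side to the $\varphi$-side is legitimate and, above all, carry the time-step weights $\dt_{n,k}$ through the bookkeeping so that they collapse into the single factor $\|\A_n^{-1}\|_1$ rather than leaving behind the raw entries of $\h\A_n^{-1}$, which for non-uniform meshes need not be bounded. Everything else is the linear algebra already set up in~\S\ref{sssec:multi-step} together with one use each of Assumptions~$({\bf A}_\x3)$ and~$({\bf A}_t)$.
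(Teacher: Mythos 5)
Your proof is correct and follows essentially the same route as the paper: the key step in both is the duality identity $\iint_{Q_T}\omega_m\,\h\delta_m^n\u_m^n\,\pi_m^n\bigl((\h\A_n^{-1})^T\bphi_m^n\bigr)\,\d\x\d t=\iint_{Q_T}\omega_m\,\delta_m^n\u_m^n\,\pi_m^n\bphi_m^n\,\d\x\d t$ obtained by exchanging the finite sums over time indices (with the cancellation of the $\dt_{n,k}$ weights you identify), combined with the gradient bound via $\|\A_n^{-1}\|_1$ from $({\bf A}_t)$ and $({\bf A}_\x 3)$, and with Lemma~\ref{lem:constability}. The only (cosmetic) difference is that the paper first derives the one-step estimate~\eqref{eq:estim-dmn} for arbitrary $\bphi_m^n$ and then specializes to $-\bP_m^n\varphi$, whereas you specialize to $\bP_m^n\varphi$ from the outset.
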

\begin{Proof}
Let $\bphi_m^n \in \R^{m\times(n+1)}$ be arbitrary, then define $\h\bphi_m^n = {(\h\A_n^{-1})}^T \bphi_m^n$, so that,
for all $k\in\{1,\dots, n\}$, one has
\begin{multline}\label{eq:estim-grad}
{\| \grad_m \h\bphi_m^{n,k} \|}_{L^\infty(\O)} =
\left\| \sum_{\ell = 1}^k \left(\A_n^{-1}\right)_{\ell,k} \grad_m \bphi_m^{n,\ell} \right\|_{L^\infty(\O)}\\
\le \sum_{\ell = 1}^k \left|\left(\A_n^{-1}\right)_{\ell,k}\right| {\| \grad_m \bphi_m^{n,\ell}\|}_{L^\infty(\O)}
\le {\|\A_n^{-1} \|}_{1} {\| \grad_m^n \bphi_m^n \|}_{L^\infty(Q_T)}.
\end{multline}
The link~\eqref{eq:hdmn} between $\h\delta_m^n$ and $\delta_m^n$ provides that
\begin{multline*}
\iint_{Q_T} \omega_m \h\delta_m^n \u_m^n \pi_m^n \h \bphi_{m}^n \d\x \d t =
\iint_{Q_T} \omega_m \pi_m^n \left( \T_n^{-1} \h\A_n \T_n \M_n \u_m^n\right) \pi_m^n \h\bphi_{m}^n \d\x \d t \\
 = \sum_{k=1}^n \dt_{n,k} \int_\O \omega_m \sum_{\ell = 1}^n \frac{{(\A_n)}_{k,\ell}\dt_{n,\ell}}{\dt_{n,k}}
 \pi_m\left( \frac{\u_m^{n,\ell} - \u_m^{n,\ell-1}}{\dt_{n,\ell}} \right)
 \pi_m \h\bphi_m^{n,k} \d\x\d t \\
 =   \sum_{\ell = 1}^n \dt_{n,\ell} \int_\O \omega_m  \pi_m\left( \frac{\u_m^{n,\ell} - \u_m^{n,\ell-1}}{\dt_{n,\ell}} \right)
 \sum_{k=1}^n {(\A_n^T)}_{\ell,k} \pi_m \h\bphi_m^{n,k} \d\x \d t.
\end{multline*}
Since $\bphi_m^n = \h\A_n^T  \h \bphi_m^n$, one obtains that
$$
\iint_{Q_T} \omega_m \h\delta_m^n \u_m^n \pi_m^n \h\bphi_{m}^n \d\x \d t
= \iint_{Q_T} \omega_m \delta_m^n \u_m^n \pi_m^n \bphi_m^n \d\x\d t,
$$
which ensures together with~\eqref{eq:estim-hdmn}, \eqref{eq:estim-grad} and (${\bf A}_t$) that
\be\label{eq:estim-dmn}
\iint_{Q_T} \omega_m \delta_m^n \u_m^n  \pi_m^n \bphi_{m}^n \d\x \d t \le C {\|\grad_m^n \bphi_m^n\|}_{L^\infty(Q_T)}.
\ee

Let $\varphi \in \Dd(Q_T)$ be arbitrary, and set $\bphi_m^n = - \bP_m^n \varphi$ so that,
thanks to Assumption~(${\bf A}_\x3$), one has
$$
\|\grad_m^n \bphi_m^n\|_{L^\infty(Q_T)} \le C \|\grad \varphi\|_{L^\infty(Q_T)}, \qquad \forall m,n\ge 1.
$$
Thus it follows from Lemma~\ref{lem:constability} and~\eqref{eq:estim-dmn} that
$$
\iint_{Q_T} \omega_m \pi_m^n \u_m^n \p_t \varphi \d\x \d t
= \iint_{Q_T} \omega_m \delta_m^n \u_m^n \pi_m^n \bphi_m^n \d\x \d t \le C\|\grad \varphi\|_{L^\infty(Q_T)},
$$
concluding the proof of Lemma~\ref{lem:multi-to-semi}.
\end{Proof}

\subsubsection{Proof of Proposition~\ref{thm:compcomp-disc}}

The proof mimics the one of~\cite[Lemma 3.1]{Moussa14}.
Let $\varphi \in C^\infty_c(Q_T)$,
we denote $\a = {\rm dist}({\rm supp}\, \varphi ; \p \O)$. Let $\rho \in C^\infty_c(\R^d)$ be such that
$\rho(-\x) = \rho(\x) \ge 0,$ for all $\x \in \R^d$, such that  ${\rm supp}\, \rho \subset B_d(0,1)$ and such that $\int_{\R^d} \rho(\x)\d\x = 1.$
For $\ell \in \N$, $\ell > 1/\a$ and $\x \in \R^d$, one defines $\rho_\ell(\x) = \ell^{d} \rho(\ell \x)$, so that ${\rm supp} \rho_\ell \subset B_d(0,1/\ell)$,
$\int_{\R^d} \rho_\ell(\x) = 1$ and $\varphi * \rho_\ell \in C^\infty_c(Q_T)$, where $*$ is the usual convolution w.r.t. the space variable $\x$.

One splits
\begin{multline}\label{eq:R1234}
\iint_{Q_T} \left( \omega uv - \omega_m \pi_m^n \u_m^n \pi_m^n  \bv_m^n\right) \varphi \d\x\d t  \\
= R_1(\ell) + R_2(\ell,m,n) + R_3(\ell,m,n) + R_4(\ell,m,n),
\end{multline}
where
\begin{align*}
R_1(\ell) = &  \iint_{Q_T}  [u \omega v - u ((\omega v)*\rho_\ell)]   \varphi  \d\x\d t, \\
R_2(\ell,m,n) = &  \iint_{Q_T} [ u ((\omega v)*\rho_\ell) - \pi_m^n \u_m^n ( \omega_m \pi_m^n \bv_m^n * \rho_\ell) ]
\varphi \d\x\d t,\\
R_3(\ell,m,n) = &  \iint_{Q_T} [ \pi_m^n \u_m^n (\omega_m \pi_m^n \bv_m^n * \rho_\ell) -
			(\omega_m \pi_m^n \u_m^n \pi_m^n \bv_m^n) * \rho_\ell  ] \varphi \d\x\d t, \\
R_4(\ell,m,n) = &  \iint_{Q_T} [ (\omega_m \pi_m^n \u_m^n \pi_m^n \bv_m^n) * \rho_\ell
			- (\omega_m \pi_m^n \u_m^n \pi_m^n \bv_m^n)  ] \varphi \d\x\d t.
\end{align*}
Clearly, $(\omega v)*\rho_\ell$ tends weakly to $\omega v$ in $L^{q'}((0,T);L^{p'}(\O))$
as $\ell$ tends to $+\infty$, leading to
\be\label{eq:R1}
\lim_{\ell \to \infty} \left| R_1(\ell) \right| = 0.
\ee
Since $\rho_\ell$ is an even function, using the H\"older inequality and the bounds assumed in the statement of the proposition we find
\begin{align}
\left| R_4(\ell,m,n) \right| =& \left| \iint_{Q_T}  (\omega_m \pi_m^n \u_m^n \pi_m^n \bv_m^n)
(  \varphi* \rho_\ell - \varphi) \d\x\d t\right| \nn \\
\le &C \| \varphi * \rho_\ell - \varphi \|_{L^\infty(Q_T)} \le \frac{C}\ell   \| \grad \varphi \|_{L^\infty(Q_T)}.\label{eq:R4}
\end{align}
In particular, $R_4(\ell, m,n)$ tends to $0$ uniformly with respect to $m$ and $n$ as $\ell$ tends towards $+\infty$.
\medskip

Concerning the term $R_3(\ell,m,n)$, one has for almost all $(\x,t) \in Q_T$ :
\begin{multline*}
S_{m,n,\ell}(\x,t) := \pi_m^n \u_m^n(\x,t) [(\omega_m \pi_m^n \bv_m^n)*\rho_\ell](\x,t) -
				[(\omega_m \pi_m^n \u_m^n \pi_m^n \bv_m^n)*\rho_\ell](\x,t) \\
 \le \int_{B_d(0,1/\ell)} \bigg[ \Pi_m^n u_m^n(\x,t) - \Pi_m^n u_m^n(\x-\y, t) \bigg]\omega_m(\x-\y)\Pi_m^n v_m^n(\x-\y,t) \rho_\ell(\y) \d\y.
\end{multline*}
We deduce from Fubini's theorem that
\begin{multline*}
|R_3(\ell,m,n)| \le \| \varphi \|_{L^\infty(Q_T)} \left\| S_{m,n,\ell} \right\|_{L^1(Q_T)}  \\
\le \;  \ov \omega \| \varphi \|_{L^\infty(Q_T)}  \left\| \pi_m^n \bv_m^n \right\|_{L^{q'}(L^{p'})}
 \int_{B_d(0,\ell)} \| \pi_m^n \u_m^n - \pi_m^n \u_m^n(\cdot - \y,\cdot) \|_{L^q(L^p)}  \rho_\ell(\y) \d\y.
\end{multline*}
It results from Assumption~\eqref{eq:translates_x} and from the assumption  $\| \u_m \|_{p,m,q,n} \le C$ that
$$
 \int_{B_d(0,\ell)} \| \pi_m^n \u_m^n - \pi_m^n \u_m^n(\cdot - \y,\cdot) \|_{L^q(L^p)}  \rho_\ell(\y) \d\y
 \underset{\ell \to +\infty}{\longrightarrow} 0
$$
uniformly w.r.t $m$ and $n$, so that
\be\label{eq:R3}
R_3(\ell,m,n) \underset{\ell \to +\infty}{\longrightarrow} 0\quad  \text{ uniformly w.r.t $m$ and $n$}.
\ee

Let us now focus on controlling the term $R_2(\ell,m,n)$. Fix $\ell \ge 1$, and define the family
$
\left(z_{m,n,\ell}\right)_{m,n} \subset L^\infty(Q_T)
$
by
$$
z_{m,n,\ell} :=(\omega_m \pi_m^n \bv_m^n)\ast \rho_\ell, \qquad \forall m,n\ge 1.
$$
The sequence $(\omega_m \pi_m^n \bv_m^n)_{m,n}$ is uniformly bounded
in $L^1(Q_T)$, then it follows from the regularity of $\rho_\ell$ that there exists $C_\ell>0$ (possibly depending on $\ell$,
but neither on $m$ nor on $n$)
such that
\be\label{eq:BV-space}
\left\|\grad z_{m,n,\ell}\right\|_{L^1(Q_T)}\le C_\ell, \qquad \forall m,n\ge1.
\ee
On the other hand, given $\psi \in C^\infty_c(Q_T)$, we have
\begin{align*}
\langle \p_t z_{m,n,\ell}, \psi \rangle_{\Dd'(Q_T),\Dd(Q_T)}= & -  \iint_{Q_T} z_{m,n,\ell} \p_t \psi \d\x\d t \\
= & -  \iint_{Q_T}\omega_m \pi_m^n \bv_m^n \p_t (\psi \ast \rho_\ell) \d\x \d t.
\end{align*}
Therefore, it follows from Lemma~\ref{lem:multi-to-semi} that
\begin{equation}\label{eq:duality-bound}
 \left| \langle \p_t z_{m,n,\ell}, \psi \rangle_{\Dd'(Q_T),\Dd(Q_T)} \right| \le C \|\grad(\psi\ast\rho_\ell)\|_{\infty}
\le C_\ell \|\psi\|_{C([0,T];L^1(\O))} 
\end{equation}
for some $C_\ell$ possibly depending on $\ell$ but neither on $m$ nor on $n$.
As a consequence, for fixed $\ell$, the sequence
$\left(\p_t z_{m,n,\ell}\right)_{m,n}$ is bounded in the space of finite Radon measures on $Q_T$.
Along with~\eqref{eq:BV-space} this ensures that the family
$\left(z_{m,n,\ell}\right)_{m,n}$ is bounded in the space $\BV(Q_T)$,
thus yielding
\be\label{eq:compact-L1}
\text{ $\left(z_{m,n,\ell} \right)_{m,n\ge 1}$ is relatively compact in $L^1(Q_T)$. }
\ee

Since $z_{m,n,\ell}$ is piecewise constant in time and smooth in space, the map 
$$V_{m,n,\ell}: \begin{cases}
\O \to \R_+\\[5pt] 
\displaystyle \x \mapsto \int_{(0,T)} |\p_t z_{m,n,\ell}(\x,t)|
\end{cases}$$
is continuous on $\O$. Let $\x_{m,n,\ell} \in \O$ be such that 
$$V_{m,n,\ell}(\x_{m,n,\ell}) \ge \frac12 \sup_{\x \in \O} V_{m,n,\ell}.$$
Let $\varphi \in C^\infty_c((0,T))$, and let $k \in \N$ with $k \ge 1/d(\x_{m,n,\ell},\p\O)$, then 
choosing $\psi: (\x,t) \mapsto \varphi(t) \rho_k (\x - \x_{m,n,\ell})$ in~\eqref{eq:duality-bound} and letting $k$ tend to $+\infty$
yields 
$$
V_{m,n,\ell}(\x_{m,n,\ell}) \le C_\ell, \qquad \forall m,n \ge 1, 
$$
whence 
\be\label{eq:BVt-Linf}
\| V_{m,n,\ell} \|_{L^\infty(\O)} \le C_\ell, \qquad \qquad \forall m,n \ge 1, 
\ee
where $C_\ell$ depends on $\ell$ but neither on $m$ nor on $n$.
In addition, the family $\left(\omega_m \pi_m^n \bv_m^n\right)_{m,n}$ being bounded in 
$L^1(Q_T)=L^1\bigl(\O;L^1((0,T))\bigr)$, the definition of $z_{m,n,\ell}$ ensures that 
$$
{\|z_{m,n,\ell}\|}_{C(\O;L^1((0,T)))} \le C_\ell, \qquad \forall m,n \ge 1.
$$
Therefore, for all $\x \in \O$, there exists $t^\star_{\x,m,n,\ell} \in (0,T)$ (possibly depending on 
$\x$, $m$, $n$ and $\ell$) such that 
\be\label{eq:CxL1t}
|z_{m,n,\ell}(\x,t^\star_{\x,m,n,\ell})| \le C_\ell, \qquad \forall m,n \ge1.
\ee
For all $\x \in \O$ and almost all $t \in (0,T)$, one has 
$$
|z_{m,n,\ell}(\x,t)|  \le   |z_{m,n,\ell}(\x,t^\star_{\x,m,n,\ell})| + V_{m,n,\ell}(\x) \le C_\ell
$$
thanks to~\eqref{eq:BVt-Linf} and~\eqref{eq:CxL1t}, whence
\be\label{eq:z-Linf}
\|z_{m,n,\ell}\|_{L^\infty(Q_T)} \le C_\ell, \qquad \forall m,n\ge 1
\ee
for some $C_\ell$ depending neither on $m$ nor on $n$.
Due to~\eqref{eq:z-Linf} and~\eqref{eq:compact-L1}, we deduce that the family
$\left(z_{m,n,\ell} \right)_{m,n\ge 1}$ is relatively compact in $L^r(Q_T)$ for all $r \in [1,+\infty)$.
In addition, (up to an unlabeled subsequence) $z_{m,n,\ell}$ converges weakly in $L^{q'}((0,T);L^{p'}(\O))$, 
as $m,n\to\infty$, to the limit  $(\omega v)\ast\rho_\ell$.
Choosing $r=\max\{p',q'\}$ we see that, up to an unlabeled subsequence,
 $z_{m,n,\ell}$ converges towards $(\omega v)\ast\rho_\ell$
 strongly in $L^{q'}((0,T);L^{p'}(\O))$ as
$m$ and $n$ tend towards $+\infty$.
Therefore, since $\pi_m^n \u_m^n$ converges weakly in $L^q((0,T);L^p(\O))$ towards $u$, we can claim that
\be\label{eq:R2}
\lim_{m,n\to \infty} |R_2(\ell,m,n)| = 0, \qquad \forall \ell \ge 1.
\ee

Let $\eps>0$, then using~\eqref{eq:R1}, \eqref{eq:R3} and \eqref{eq:R4}, there exists $\ell_\eps\ge 1$
such that
$$
| R_1(\ell_\eps) | +| R_3(\ell_\eps,m,n) | + | R_4(\ell_\eps,m,n) | \le \eps, \qquad \forall m,n \ge 1.
$$
It follows from~\eqref{eq:R1234} and \eqref{eq:R2} that
$$
\limsup_{m,n\to \infty} \left| \iint_{Q_T} \left(\omega_m \pi_m^n \u_m^n \pi_m^n  \bv_m^n-
 \omega uv\right) \varphi \d\x\d t \right| \le \eps, \qquad \forall \eps >0,
$$
concluding the proof of Proposition~\ref{thm:compcomp-disc}. \hfill $\square$

\section{Application to Porous Medium equation}
\label{sec:Porous}

The goal of this section is to provide a new convergence result for a two-point flux in space (cf. \cite{EGH00}) and BDF2 in time (see Remark~\ref{rem:BDF2})
approximation of the solution to the porous medium equation as an application of Theorem~\ref{thm:main}.
More precisely, let $\O$ be a polygonal subset of $\R^d$ with outward normal $\n$, let $T$ be a
finite time horizon, let $q > 1$, then given $u_0 \in L^2(\O)$,
we aim to approximate the solution $u$ of
\be\label{eq:porous}
\begin{cases}
\p_t u - \Delta (|u|^{q-1}u) =0 & \text{ in } Q_T = \O \times (0,T), \\
\grad (|u|^{q-1}u ) \cdot \n = 0 & \text{ on } \p \O \times (0,T), \\
u(\cdot, 0) = u_0 & \text{ on } \O.
\end{cases}
\ee
Such an equation has been widely studied in the last decades, see in particular~\cite{Otto01,Vaz07}.
In particular, it is well-known that the problem~\eqref{eq:porous} admits a unique solution $u \in L^\infty((0,T);L^2(\O))$
and such that $u^{(q+1)/2} \in L^2((0,T); H^1(\O))$ (see e.g.~\cite{AL83,Otto96}).
We propose a formally second-order accurate in both time and space Finite Volume scheme, and show the
convergence of the corresponding family approximate solution towards the unique solution $u$ as the discretization
parameter tend to $0$.

\subsection{Discretization of $Q_T$}

We require the spatial mesh to fulfill the so-called \emph{othogonality condition},
sticking to the definition of~\cite[Definition~3.1]{EGH00} for an admissible discretization $(\Tt, \Ee, (\x_K)_{K\in\Tt})$ of $\O$.
More precisely, the domain $\O$ is supposed to be split in a tessellation $\Tt$ of open polygonal convex subsets $\{K\}_{K\in\Tt}$,
such that $\bigcup_{K\in\Tt} \ov K = \ov \O$ and
$K \cap L = \emptyset$ if $K \neq L$, where $(K,L) \in \Tt^2$. Each \emph{control volume} $K \in \Tt$
is endowed of a so-called \emph{center} $\x_K \in K$\footnote{This assumption is made in order to lighten the presentation; it is a classical issue to relax this assumption, requiring only that $\n_{KL}$ point from $K$ to $L$ (this is the case, e.g., under the Delaunay condition for simplicial meshes) and that the mesh size $h$ be defined by $h_\Tt = \max_{K \in \Tt} {\rm diam} (K\cup\{\x_K\})$; we refer, e.g., to \cite{AndrBendahmaneSaad} for details.}, and of interfaces (edges, if $d=2$; faces, if $d=3$) $\s \subset \p K$ contained in hyperplanes of $\R^{d-1}$, the
$(d-1)$-dimensional Lebesgue measure of an interface $\s$ being supposed to be strictly positive.
The intersection of the closure of two elements $K,L \in \Tt$ is either an interface (denoted by $\s_{KL}$), or a manifold of dimension less than $d-1$, or it is empty; we define the set $\Ee$ of the interfaces by $\Ee = \{ \s_{KL} \; | \; K,L \in \Tt\}$.
For all $K \in \Tt$, we denote by $\Nn_K\subset \Tt$ the set of the neighboring cells of $K$, defined by $L \in \Nn_K$ iff $\s_{KL} \in \Ee$.
We assume that for all $K \in \Tt$ and $L \in \Nn_K$, the segment $[x_K,x_L]$ crosses the interface $\s_{KL}$ orthogonally.
We denote by $\n_{KL} = \frac{\x_K - \x_L}{|\x_K - \x_L|}$ the unit normal vector to $\s_{KL}$ outward with respect to $K$ and inward with
respect to $L$.
Note that the above definition of $\Ee$ does not contain the interfaces lying on the boundary $\p\O$ of $\O$. We define by $\Ee_{\rm ext}$ the
set of such boundary interfaces, and $\Ee_{K, \rm ext}$ the subset of $\Ee_{\rm ext}$ made of the interfaces of $\p\O$.
In what follows, we denote by $m_K$ the $d$-dimensional Lebesgue measure of $K\in\Tt$, by $m_{KL}$ the $(d-1)$-dimensional
Lebesgue measure of $\s_{KL} \in \Ee$, and by $m_\s$ the $(d-1)$ dimensional measure of $\s \in \Ee_{\rm ext}$.
We introduce the size $h_\Tt$ and the regularity $\rho_\Tt$
of the mesh $\Tt$ by setting
\be\label{eq:h-rho}
h_\Tt = \max_{K \in \Tt} {\rm diam} (K),
\quad \rho_\Tt = \max_{K\in\Tt} \sum_{L\in\Nn_K} \left( \frac{m_{KL} |\x_K - \x_L|}{m_K} + \frac{{\rm diam}(K)}{|\x_K - \x_L|} \right).
\ee
\medskip
Concerning the time discretization of $(0,T)$, in order to simplify the presentation we restrict our study to the simple case of uniform time discretizations.
Given $n  \in \N^\ast$, we denote $\dt  = T/n$ and $t_k = k \dt$ for $k \in \{0,\dots, n\}$.

\medskip
Given $\u_\Tt = (u_K)_{K\in\Tt} \in \R^{\#\Tt}$, we denote by $\pi_\Tt \u_\Tt$ the piecewise constant function
defined almost everywhere in $\O$ by
\be\label{eq:PiTt}
\pi_\Tt\u_\Tt(\x) = u_K \quad \text{ if } \x \in K.
\ee
For $\u_\Tt^n = (u_K^k)_{K\in\Tt, k \in \{0,\dots, n\}} \in \R^{\#\Tt\times(n+1)}$, and $k \in \{0,\dots, n\},$ we denote by
$$
\u_\Tt^{n,k} = (u_K^k)_{K\in\Tt} \in \R^{\#\Tt}.
$$
We denote by $\pi_\Tt^n \u_\Tt^n$ the
piecewise constant function defined almost everywhere in $Q_T$ by
$$
\pi_\Tt^n \u_\Tt^n(\x,t) = \pi_\Tt  \u_\Tt^{n,k}(\x) = u_K^k \quad \text{ if } \x \in K \text{ and } t \in (t_{k-1},t_k].
$$

\medskip
In order to reconstruct a discrete gradient, we introduce the so-called \emph{diamond cells} $D_{KL}$ for $\s_{KL} \in \Ee$, which
are open subsets of $\O$ defined as the convex hull of $\x_K, \x_L$ and $\s_{KL}$.
Given $\bv_\Tt = (v_K)_{K\in\Tt} \in \R^{\#\Tt}$, we define
\be\label{eq:GTt}
\grad_\Tt \bv_\Tt (\x) =  d \frac{(v_K - v_L) }{|\x_K - \x_L|}\n_{KL}, \qquad \forall \x \in D_{KL}, \; \forall \s_{KL} \in \Ee.
\ee
Using the geometrical identity
\be\label{eq:geo-diamond}
{\rm meas}(D_{KL}) = \int_{D_{KL}}\d\x = \frac{m_{KL} |\x_K - \x_L|}d, \qquad \forall s_{KL}\in\Ee,
\ee
we obtain that
\be\label{eq:norm-grad}
\left\|\grad_\Tt \bv_\Tt\right\|_{L^2(\O)^d}^2 = d \sum_{\s_{KL} \in \Ee} \tau_{KL} (v_K - v_L)^2,
\ee
where $\tau_{KL} = \frac{m_{KL}}{|\x_K-\x_L|}$.
This leads to the following definition of the discrete norm $\|\cdot \|_{\x,\Tt}$: $\forall \bv_\Tt = \left(v_K\right)_{K\in\Tt} \in \R^{\#\Tt}$,
\begin{align*}
\| \bv_\Tt \|_{2,\Tt}^2  =& \| \pi_\Tt \bv_\Tt \|_{L^2(\O)}^2 +  \| \grad_\Tt \bv_\Tt \|_{L^2(\O)^d}^2 \\
=& \sum_{K\in\Tt} \left(v_K\right)^2 m_K + d \sum_{\s_{KL}\in\Ee} \tau_{KL} (v_K - v_L)^2.
\end{align*}
We also define $\grad_\Tt^n: \R^{\#\Tt\times (n+1)} \to L^\infty(Q_T)$ by
$$
\grad_\Tt^n \bv_\Tt^n(\cdot,t) = \grad_\Tt \bv_\Tt^{n,k} \;\; \text{ if } t \in (t_{k-1}, t_k], \quad \forall \bv_\Tt^n  \in \R^{\#\Tt \times (n+1)}.
$$

\subsection{The Finite Volume scheme}
The initial data $u_0$ is discretized into $\u_\Tt^{n,0} = \left(u_K^0\right)_{K\in\Tt} \in \R^{\#\Tt}$, where
\be\label{eq:uK0}
u_K^0 = \frac1{m_K} \int_K u_0(\x) \d\x, \qquad \forall K \in \Tt.
\ee
It follows from Jensen's inequality that
\be\label{eq:u0L2}
\| \pi_\Tt \u_\Tt^0 \|_{L^2(\O)}^2 = \sum_{K\in\Tt} \left(u_K^0\right)^2 m_K \le \|u_0\|_{L^2(\O)}^2.
\ee
In the sequel, we denote
$$\psi:\begin{cases} \R \to \R \\ u \mapsto |u|^{q-1} u.\end{cases}$$
We use the implicit Euler scheme for determining
$\u_\Tt^{n,1} = \left(u_K^1\right)_{K\in\Tt}$, i.e.,
\be\label{eq:u1}
\frac{u_K^1 - u_K^0}\dt m_K + \sum_{L \in \Nn_K} \tau_{KL} \left(\psi(u_K^1) - \psi(u_L^1) \right) = 0, \qquad \forall K \in \Tt.
\ee
As soon as $2 \le k\le n$, we use the so-called BDF2 scheme for determining $\u_\Tt^{n,k} = \left(u_K^k\right)_{K\in\Tt}$,
that is required to fulfill: $\forall K \in \Tt, \; \forall k \in \{2,\dots, n\}$,
\be\label{eq:uk}
\frac{\frac32 u_K^k - 2 u_K^{k-1} + \frac12 u^{k-2}_K}\dt m_K +
	 \sum_{L \in \Nn_K} \tau_{KL} \left(\psi(u_K^k) - \psi(u_L^k) \right) = 0.
\ee

\subsection{Main {\em a priori} estimates and existence of a discrete solution}\label{ssec:main-estimates}

Define the function $\phi:\R \to \R$ by
\be\label{eq:phi-porous}
\phi(u) = \int_0^u \sqrt{\psi'(a)} \d a = \frac{2\sqrt{q}}{q+1}|u|^{\frac{q-1}2} u, \qquad \forall u \in \R,
\ee
then  the Cauchy-Schwarz inequality yields
\be\label{eq:CS-phi}
(a-b)(\psi(a)-\psi(b)) \ge \left(\phi(a)-\phi(b)\right)^2, \qquad \forall (a,b) \in \R^2.
\ee
Therefore, multiplying~\eqref{eq:u1} by $\dt u_K^1$ and summing over $K \in \Tt$, using
$$
(a-b)a \ge \frac{a^2}2 - \frac{b^2}2, \qquad \forall (a,b) \in \R^2,
$$
and the classical summation-by-parts, we find
\be\label{eq:NRJ1}
\frac12\sum_{K\in\Tt} \left(u_K^1\right)^2  m_K + \dt \sum_{\s_{KL}} \tau_{KL} \left(\phi(u_K^1) - \phi(u_L^1)\right)^2 \le \frac12\sum_{K\in\Tt} \left(u_K^0\right)^2m_K.
\ee
In order to obtain an estimate on the following time steps, we use the inequality
$$
\left(\frac32 a - 2b + \frac12c\right)a \ge \frac14\left( a^2 + (2a-b)^2 - b^2 - (2b-c)^2\right), \quad \forall (a,b,c) \in \R^3.
$$
Multiplying~\eqref{eq:uk} by $\dt u_K^k$ and summing over $K\in\Tt$ and $k \in \{2,\dots,\ell\}$
for some $\ell\in \{2,\dots, n\}$, we get
\begin{multline}\label{eq:NRJell}
\frac14\sum_{K\in\Tt} \left(\left(u_K^{\ell}\right)^2 + \left(2 u_K^{\ell} - u_K^{\ell-1}\right)^2\right)  m_K
+ \sum_{k=2}^{\ell} \dt \sum_{\s_{KL}} \tau_{KL} \left(\phi(u_K^k) - \phi(u_L^k)\right)^2 \\
\le \frac14\sum_{K\in\Tt} \left(\left(u_K^{1}\right)^2 + \left(2 u_K^{1} - u_K^{0}\right)^2\right)  m_K.
\end{multline}
Combining \eqref{eq:NRJ1} and~\eqref{eq:NRJell} and using~\eqref{eq:u0L2} we find that for all $\ell \in \{1,\dots,n\} $,
\be\label{eq:NRJell2}
\frac14\sum_{K\in\Tt} \left(u_K^{\ell}\right)^2 m_K + \sum_{k=1}^\ell  \dt \sum_{\s_{KL}} \tau_{KL} \left(\phi(u_K^k) - \phi(u_L^k)\right)^2
\le 2 \|u_0\|_{L^2(\O)}^2,
\ee
leading to the following statement.
\begin{prop}\label{prop:NRJ}
Let $\u_\Tt^n$ be a solution of the scheme~\eqref{eq:u1}--\eqref{eq:uk}, then
there exists $C$ depending only on $u_0$ and $d$ (but not on the discretization) such that
$$
\left\| \pi_\Tt^n \u_\Tt^n\right\|_{L^\infty((0,T);L^2(\O))} + \left\| \grad_\Tt^n \phi(\u_\Tt^n)\right\|_{L^2(Q_T)^d} \le C.
$$
\end{prop}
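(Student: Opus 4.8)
The plan is short, because essentially all of the work is already contained in the energy estimate~\eqref{eq:NRJell2}: no further manipulation of the scheme is needed, and it only remains to rewrite~\eqref{eq:NRJell2} in terms of the reconstructed functions $\pi_\Tt^n\u_\Tt^n$ and $\grad_\Tt^n\phi(\u_\Tt^n)$ by means of the definitions of the reconstruction operators.

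First I would treat the $L^\infty((0,T);L^2(\O))$ term. Since $\pi_\Tt^n\u_\Tt^n(\cdot,t)=\pi_\Tt\u_\Tt^{n,k}$ whenever $t\in(t_{k-1},t_k]$, one has
\[
\|\pi_\Tt^n\u_\Tt^n\|_{L^\infty((0,T);L^2(\O))}^2=\max_{1\le k\le n}\sum_{K\in\Tt}\bigl(u_K^k\bigr)^2 m_K .
\]
Discarding the nonnegative dissipation term in~\eqref{eq:NRJell2} and taking $\ell=k$ gives $\sum_{K\in\Tt}(u_K^k)^2 m_K\le 8\|u_0\|_{L^2(\O)}^2$ for every $k$, hence $\|\pi_\Tt^n\u_\Tt^n\|_{L^\infty((0,T);L^2(\O))}\le 2\sqrt2\,\|u_0\|_{L^2(\O)}$. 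For the discrete gradient term I would use the identity~\eqref{eq:norm-grad}, which after summation in time yields
\[
\|\grad_\Tt^n\phi(\u_\Tt^n)\|_{L^2(Q_T)^d}^2=d\sum_{k=1}^n\dt\sum_{\s_{KL}\in\Ee}\tau_{KL}\bigl(\phi(u_K^k)-\phi(u_L^k)\bigr)^2 ;
\]
taking $\ell=n$ in~\eqref{eq:NRJell2} bounds the right-hand side by $2d\|u_0\|_{L^2(\O)}^2$, so that $\|\grad_\Tt^n\phi(\u_\Tt^n)\|_{L^2(Q_T)^d}\le\sqrt{2d}\,\|u_0\|_{L^2(\O)}$. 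Adding the two bounds gives the assertion with $C=(2\sqrt2+\sqrt{2d})\,\|u_0\|_{L^2(\O)}$, a constant depending only on $u_0$ and $d$.

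The genuinely delicate point is the one already settled above, namely the derivation of~\eqref{eq:NRJell2}. The BDF2 update~\eqref{eq:uk} does not make the naive energy $\sum_{K}(u_K^k)^2 m_K$ decrease step by step, which is why one tests~\eqref{eq:uk} against $\dt\,u_K^k$ and invokes the algebraic inequality $(\tfrac32a-2b+\tfrac12c)a\ge\tfrac14\bigl(a^2+(2a-b)^2-b^2-(2b-c)^2\bigr)$, so that the \emph{modified} energy $\tfrac14\sum_{K}\bigl((u_K^\ell)^2+(2u_K^\ell-u_K^{\ell-1})^2\bigr)m_K$ telescopes in $\ell$; the Cauchy--Schwarz bound~\eqref{eq:CS-phi} then yields the dissipation term $\dt\sum_{\s_{KL}}\tau_{KL}(\phi(u_K^k)-\phi(u_L^k))^2$, while~\eqref{eq:NRJ1} handles the initialization with implicit Euler and~\eqref{eq:u0L2} absorbs the right-hand side into $\|u_0\|_{L^2(\O)}^2$. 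Granting~\eqref{eq:NRJell2}, the proposition is pure bookkeeping. Finally, the existence of a discrete solution $\u_\Tt^n$, announced in \S\ref{ssec:main-estimates} and implicit in the statement, is obtained separately by a standard topological degree argument on the finite-dimensional nonlinear systems~\eqref{eq:u1} and~\eqref{eq:uk}, the a priori bound just proved guaranteeing that no solution branch escapes to infinity along the homotopy.
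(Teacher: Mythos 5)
Your proposal is correct and follows essentially the same route as the paper: the proposition is read off from the telescoping energy estimate~\eqref{eq:NRJell2}, itself obtained by testing~\eqref{eq:u1} and~\eqref{eq:uk} against $\dt\,u_K^k$, using the algebraic inequality $(\tfrac32a-2b+\tfrac12c)a\ge\tfrac14\bigl(a^2+(2a-b)^2-b^2-(2b-c)^2\bigr)$ together with~\eqref{eq:CS-phi} and~\eqref{eq:u0L2}. Your translation of~\eqref{eq:NRJell2} into the two norms via the piecewise-constant-in-time reconstruction and the identity~\eqref{eq:norm-grad}, with explicit constants, is exactly the bookkeeping the paper leaves implicit.
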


In order to approximate the solution to the scheme~\eqref{eq:uK0}, \eqref{eq:u1} and~\eqref{eq:uk}, one can use
the iterative algorithm based on monotonicity proposed in~\cite[Remark 4.9]{EGH00}, that converges towards a solution
to the scheme. Therefore, there exists at least one solution to the scheme. The uniqueness of the discrete solution follows
from a classical monotonicity property of two-point flux approximation (see e.g.~\cite{Tipi}), leading to the following statement.
\begin{prop}\label{prop:existence}
Let $\left(\Tt,\Ee,(\x_K)_{K\in\Tt}\right)$ be an admissible discretization of $\O$, then
there exists a unique solution to the scheme~\eqref{eq:uK0}, \eqref{eq:u1} and~\eqref{eq:uk}.
\end{prop}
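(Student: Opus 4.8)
The plan is to argue by induction on the time level $k$. The vector $\u_\Tt^{n,0}$ is prescribed explicitly by~\eqref{eq:uK0}, so there is nothing to prove at level $0$; and once $\u_\Tt^{n,0},\dots,\u_\Tt^{n,k-1}$ are known, both~\eqref{eq:u1} (when $k=1$) and~\eqref{eq:uk} (when $k\ge2$) amount to solving, for the unknown $\u=(u_K)_{K\in\Tt}\in\R^{\#\Tt}$, a system of the generic form
\[
\frac{a\,u_K-b_K}{\dt}\,m_K+\sum_{L\in\Nn_K}\tau_{KL}\bigl(\psi(u_K)-\psi(u_L)\bigr)=0,\qquad\forall K\in\Tt,
\]
where $a>0$ is a fixed constant ($a=1$ for the implicit Euler step, $a=\tfrac32$ for the BDF2 step) and $(b_K)_{K\in\Tt}\in\R^{\#\Tt}$ is a known data vector ($b_K=u_K^0$, resp.\ $b_K=2u_K^{k-1}-\tfrac12 u_K^{k-2}$). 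It thus suffices to prove that this generic system has exactly one solution for every $a>0$ and every choice of $(b_K)_{K\in\Tt}$.

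For existence I would consider the continuous map $F\colon\R^{\#\Tt}\to\R^{\#\Tt}$ whose $K$-th component is the left-hand side above (continuity follows from that of $\psi$). Testing $F(\u)$ against $\u$ and using the classical summation-by-parts formula together with the symmetry $\tau_{KL}=\tau_{LK}$ gives
\[
\sum_{K\in\Tt}F(\u)_K\,u_K=\frac{a}{\dt}\sum_{K\in\Tt}m_K u_K^2-\frac1{\dt}\sum_{K\in\Tt}m_K b_K u_K+\sum_{\s_{KL}\in\Ee}\tau_{KL}\bigl(\psi(u_K)-\psi(u_L)\bigr)(u_K-u_L).
\]
The last sum is nonnegative because $\psi$ is nondecreasing, while by the Cauchy-Schwarz inequality the first two terms dominate it once $\|\u\|$ is large (here $a/\dt>0$ is crucial). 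Hence $\sum_{K}F(\u)_K u_K>0$ on a sufficiently large sphere, and the standard corollary of Brouwer's fixed-point theorem yields a zero of $F$. Alternatively, one may simply invoke the monotone iterative procedure of~\cite[Remark~4.9]{EGH00}, which converges to a solution of the generic system.

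For uniqueness I would again induct on $k$: assuming $\u_\Tt^{n,0},\dots,\u_\Tt^{n,k-1}$ already uniquely determined, let $\u$ and $\tilde\u$ both solve the generic system with the same $a$ and $(b_K)_K$, and set $w_K=u_K-\tilde u_K$, $z_K=\psi(u_K)-\psi(\tilde u_K)$. Subtracting the two equations, multiplying the $K$-th difference by $z_K$, summing over $K\in\Tt$ and summing by parts gives
\[
\frac{a}{\dt}\sum_{K\in\Tt}m_K\,w_K z_K+\sum_{\s_{KL}\in\Ee}\tau_{KL}\,(z_K-z_L)^2=0.
\]
Both terms are nonnegative --- the first since $\psi$ nondecreasing forces $w_K z_K\ge0$ --- so both vanish; in particular $m_K w_K z_K=0$ for every $K$, and since $\psi$ is \emph{strictly} increasing this implies $w_K=0$, i.e.\ $\u=\tilde\u$. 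This closes the induction and proves Proposition~\ref{prop:existence}.

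I expect this result to be essentially routine rather than a real obstacle; the one point that genuinely has to be watched is that the leading BDF2 coefficient equals $\tfrac32>0$, which is exactly what makes the accumulation term coercive in the existence test and sign-definite in the uniqueness test --- a nonpositive leading coefficient would break both. The strict (not merely weak) monotonicity of $\psi$ is likewise essential at the last step, since it is what allows one to recover $u_K$ from $\psi(u_K)$.
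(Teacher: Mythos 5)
Your proof is correct. It differs from the paper in that the paper does not write out an argument at all: for existence it invokes the monotone iterative algorithm of \cite[Remark 4.9]{EGH00}, which converges to a solution of the nonlinear system at each time level, and for uniqueness it appeals to the ``classical monotonicity property of two-point flux approximation'' with a pointer to \cite{Tipi}. Your reduction of both \eqref{eq:u1} and \eqref{eq:uk} to a single generic system with leading coefficient $a>0$, followed by the Brouwer-type zero-of-a-vector-field lemma for existence and the test against $z_K=\psi(u_K)-\psi(\tilde u_K)$ for uniqueness, is a clean self-contained substitute; the uniqueness computation is in fact exactly the ``classical monotonicity property'' the paper alludes to, made explicit. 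What your version buys is independence from the cited references and a transparent identification of the two structural hypotheses that matter (positivity of the leading BDF2 coefficient $\tfrac32$ and strict monotonicity of $\psi$); what the paper's version buys is brevity and, via the monotone iteration of \cite{EGH00}, a constructive scheme one can actually implement. One cosmetic remark: in the coercivity estimate the gradient term $\sum_{\s_{KL}}\tau_{KL}(\psi(u_K)-\psi(u_L))(u_K-u_L)$ is simply nonnegative and can be discarded, so nothing needs to ``dominate'' it --- only the quadratic accumulation term must beat the linear data term, which it does for $\|\u\|$ large.
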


We need another \emph{a priori} estimate before applying Theorem~\ref{thm:main}. This is the purpose of the following
statement.

\begin{lem}\label{lem:flux-L1}
Let $\u_\Tt^n$ be the unique solution to the scheme~\eqref{eq:uK0}, \eqref{eq:u1} and~\eqref{eq:uk}, then
there exists $C$ depending on $\O$, $T$, $q$, $\rho_\Tt$ and $d$ such that
$$
\left\|\pi_\Tt^n \psi(\u_\Tt^n) \right\|_{L^1(Q_T)} + \left\| \grad_\Tt^n \psi(\u_\Tt^n) \right\|_{L^1(Q_T)^d} \le C.
$$
\end{lem}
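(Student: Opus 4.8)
The plan is to deduce both bounds from a single uniform estimate: that $\pi_\Tt^n\phi(\u_\Tt^n)$ is bounded in $L^2(Q_T)$ (equivalently, that $\pi_\Tt^n\u_\Tt^n$ is bounded in $L^{q+1}(Q_T)$), $\phi$ being the function of~\eqref{eq:phi-porous}. The underlying algebraic fact is that $\psi=\Psi\circ\phi$ with $\Psi(s)=c_q\,\mathrm{sign}(s)|s|^{2q/(q+1)}$; thus $|\psi(r)|=c_q|\phi(r)|^{2q/(q+1)}$, and $\Psi$ is $C^1$ with $|\Psi'(s)|=c_q'|s|^{(q-1)/(q+1)}$, the exponents $2q/(q+1)$ and $2(q-1)/(q+1)$ both lying in $(0,2)$ for $q>1$.

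Assuming $\pi_\Tt^n\phi(\u_\Tt^n)$ bounded in $L^2(Q_T)$, the bound on $\|\pi_\Tt^n\psi(\u_\Tt^n)\|_{L^1(Q_T)}=c_q\|\pi_\Tt^n\phi(\u_\Tt^n)\|_{L^{2q/(q+1)}(Q_T)}^{2q/(q+1)}$ follows from the embedding $L^2(Q_T)\hookrightarrow L^{2q/(q+1)}(Q_T)$ (recall $Q_T$ is bounded and $2q/(q+1)<2$). For the gradient term, the geometric identity~\eqref{eq:geo-diamond} and the definition~\eqref{eq:GTt} give $\|\grad_\Tt^n\psi(\u_\Tt^n)\|_{L^1(Q_T)^d}=\sum_{k=1}^n\dt\sum_{\s_{KL}\in\Ee}m_{KL}\,|\psi(u_K^k)-\psi(u_L^k)|$. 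I would bound $|\psi(u_K^k)-\psi(u_L^k)|\le c_q'\,\max\{|\phi(u_K^k)|,|\phi(u_L^k)|\}^{(q-1)/(q+1)}\,|\phi(u_K^k)-\phi(u_L^k)|$ by the mean-value theorem applied to $\Psi$, write $m_{KL}=\tau_{KL}^{1/2}(m_{KL}|\x_K-\x_L|)^{1/2}$, and apply the Cauchy--Schwarz inequality over the pairs $(k,\s_{KL})$ weighted by $\dt$. The first resulting factor, $\bigl(\sum_k\dt\sum_{\s_{KL}}\tau_{KL}(\phi(u_K^k)-\phi(u_L^k))^2\bigr)^{1/2}\le C\,\|\grad_\Tt^n\phi(\u_\Tt^n)\|_{L^2(Q_T)^d}$, is uniformly bounded by Proposition~\ref{prop:NRJ}. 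In the second factor, $\bigl(\sum_k\dt\sum_{\s_{KL}}m_{KL}|\x_K-\x_L|\max\{|\phi(u_K^k)|,|\phi(u_L^k)|\}^{2(q-1)/(q+1)}\bigr)^{1/2}$, I would use $\max\{a,b\}^\theta\le a^\theta+b^\theta$, regroup the interface sum by control volumes, and invoke the bounded-overlap bound $\sum_{L\in\Nn_K}m_{KL}|\x_K-\x_L|\le\rho_\Tt\,m_K$ coming from the mesh regularity~\eqref{eq:h-rho}; this turns the second factor into $(2\rho_\Tt)^{1/2}\|\pi_\Tt^n\phi(\u_\Tt^n)\|_{L^{2(q-1)/(q+1)}(Q_T)}^{(q-1)/(q+1)}$, finite since $2(q-1)/(q+1)<2$ and $\pi_\Tt^n\phi(\u_\Tt^n)\in L^2(Q_T)$. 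This is where the dependence on $\rho_\Tt$ enters.

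It remains to establish the key estimate $\|\pi_\Tt^n\phi(\u_\Tt^n)\|_{L^2(Q_T)}\le C(\O,T,q,\rho_\Tt,d)$, and this is the delicate point. Proposition~\ref{prop:NRJ} furnishes the uniform $L^2(Q_T)^d$ bound on $\grad_\Tt^n\phi(\u_\Tt^n)$; moreover, since $|\phi(r)|^{4/(q+1)}=c_q|r|^2$, the $L^\infty((0,T);L^2(\O))$ bound on $\pi_\Tt^n\u_\Tt^n$ reads as a time-uniform bound $\sup_t\|\pi_\Tt^n\phi(\u_\Tt^n)(\cdot,t)\|_{L^{4/(q+1)}(\O)}\le C$. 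For each fixed $t$ I would then apply a discrete Gagliardo--Nirenberg--Sobolev inequality on the mesh $\Tt$, with constant depending only on $\O$, $d$ and $\rho_\Tt$ (in the spirit of the discrete Sobolev embeddings of the finite-volume literature), to bound $\|\pi_\Tt^n\phi(\u_\Tt^n)(\cdot,t)\|_{L^2(\O)}$ by a power $\theta<1$ of $\|\grad_\Tt^n\phi(\u_\Tt^n)(\cdot,t)\|_{L^2(\O)^d}$ times a power of $\|\pi_\Tt^n\phi(\u_\Tt^n)(\cdot,t)\|_{L^{4/(q+1)}(\O)}$ (when $1<q<2$ nothing is needed beyond the embedding $L^{4/(q+1)}(\O)\hookrightarrow L^2(\O)$ and the time-uniform bound, since then $4/(q+1)>2$). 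Squaring, integrating over $t\in(0,T)$ and applying Hölder's inequality in time together with the time-uniform $L^{4/(q+1)}$ bound then yields the claimed $L^2(Q_T)$ estimate. I expect this discrete functional inequality --- specifically, checking that it holds with the two-point-flux reconstruction $\grad_\Tt$ and a constant depending on the mesh only through $\rho_\Tt$, with the possibly sub-unit exponent $4/(q+1)$ that occurs for $q>3$ --- to be the main obstacle. Let me stress that mass conservation --- which follows by summing~\eqref{eq:u1}--\eqref{eq:uk} over $K\in\Tt$ and gives $\sum_K u_K^k m_K=\sum_K u_K^0 m_K$ for every $k$ --- does not suffice on its own: for $q>3$ the spatial mean of $\phi(\u_\Tt^n)$ is not controlled by that of $\u_\Tt^n$, so the gradient bound of Proposition~\ref{prop:NRJ} is genuinely indispensable here.
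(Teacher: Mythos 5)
Your treatment of the gradient term and of the $L^1$ bound on $\psi$ is essentially the paper's: the paper also writes $\|\grad_\Tt^n\psi(\u_\Tt^n)\|_{L^1}$ as $\sum_k\dt\sum_{\s_{KL}}m_{KL}\,\eta_{KL}^k|\phi(u_K^k)-\phi(u_L^k)|$ with $\eta_{KL}^k\le \sqrt q(|u_K^k|^{(q-1)/2}+|u_L^k|^{(q-1)/2})$ (which is your $|\Psi'|\le c\max\{|\phi(u_K)|,|\phi(u_L)|\}^{(q-1)/(q+1)}$ in disguise), then applies Cauchy--Schwarz against $\|\grad_\Tt^n\phi(\u_\Tt^n)\|_{L^2}$ and absorbs $\sum_{L\in\Nn_K}m_{KL}|\x_K-\x_L|\le\rho_\Tt m_K$ exactly as you do. Where you genuinely diverge is on the crux, the uniform bound $\|\pi_\Tt^n\phi(\u_\Tt^n)\|_{L^2(Q_T)}\le C$. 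The paper obtains it by adapting Igbida's lemma, i.e.\ a discrete Poincar\'e--Wirtinger inequality combined with a Chebyshev argument on the $L^\infty((0,T);L^2(\O))$ bound of $\pi_\Tt^n\u_\Tt^n$: for a.e.\ $t$ there is a set of measure $\ge\meas(\O)/2$ on which $|u|\le K(\|u_0\|_{L^2},\O)$, hence $|\phi(u)|\le\phi(K)$ there, and the mean-type Poincar\'e inequality then controls $\|\phi(u)(\cdot,t)\|_{L^2(\O)}$ by $\|\grad_\Tt\phi(\u_\Tt)(\cdot,t)\|_{L^2}+\phi(K)$. This route needs only the discrete Poincar\'e--Wirtinger inequality (constant depending on $\O$, $d$, $\rho_\Tt$), for which the paper cites ready-made references, and it sidesteps entirely the difficulty you flag: your discrete Gagliardo--Nirenberg interpolation must handle the quasi-norm exponent $4/(q+1)<1$ when $q>3$, a version that is not off-the-shelf in the finite-volume literature. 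So your plan for the key estimate is plausible but harder to close than the paper's, and you correctly identify it as the unresolved step.

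One concrete slip: you assert that for $1<q<2$ one has $4/(q+1)>2$ so that the embedding $L^{4/(q+1)}(\O)\hookrightarrow L^2(\O)$ already suffices. In fact $4/(q+1)<2$ for \emph{every} $q>1$, so that shortcut is never available and the interpolation (or Poincar\'e--Wirtinger) argument is needed for the whole range $q>1$. This does not invalidate the strategy, but it means there is no ``easy regime''. Your closing remark that mass conservation alone cannot control the spatial mean of $\phi(\u_\Tt^n)$ and that the gradient estimate of Proposition~\ref{prop:NRJ} is indispensable is correct and consistent with the paper's argument.
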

\begin{proof}
First of all, by adapting to the discrete framework the technical lemma~\cite[Lemma A.1]{Igb07}, we can claim that there exists
$C$ depending only on $u_0$, $q$, $T$ and $d$ such that
\be\label{eq:Igbida}
\left\| \pi_\Tt^n \phi(\u_\Tt^n) \right\|_{L^2(Q_T)} \le C.
\ee
Indeed, in the continuous case  this property relies on the Poincar\'e-Wirtinger inequality,
for which there exist discrete counterparts~\cite{GG10, ABRB11}.
It follows from the definition~\eqref{eq:phi-porous} of the function $\phi$ that there exists $C$  depending only on $u_0$, $q$, $T$ and $d$
such that
\be\label{eq:uLq+1}
\left\| \pi_\Tt^n \u_\Tt^n \right\|_{L^{q+1}(Q_T)} \le C.
\ee

The definition of the discrete gradient $\grad_\Tt^n$ and the geometrical identity~\eqref{eq:geo-diamond} ensure that
\begin{align}
\label{eq:gradPsi-L1}\left\| \grad_\Tt^n \psi(\u_\Tt^n) \right\|_{L^1(Q_T)}
	= & \sum_{k=1}^n \dt \sum_{\s_{KL} \in \Ee} m_{KL} \left| \psi(u_K^k) - \psi(u_L^k) \right| \\
	= & \sum_{k=1}^n \dt \sum_{\s_{KL} \in \Ee} m_{KL} \eta_{KL}^k \left| \phi(u_K^k) - \phi(u_L^k) \right|, \nonumber
\end{align}
where, for all $k \in \{1,\dots, N\}$ and all $\s_{KL} \in \Ee$, we have set
$$
\eta_{KL}^k = \begin{cases}
\displaystyle \frac{ \psi(u_K^k) - \psi(u_L^k) }{\phi(u_K^k) - \phi(u_L^k)} & \text{ if } u_K^k \neq u_L^k, \\[10pt]
\phi'(u_K^k) = \sqrt{q} |u_K^k|^{\frac{q-1}2} & \text{ if } u_K^k = u_L^k.
\end{cases}
$$
In particular, the mean-value theorem yields: $\forall \s_{KL} \in \Ee$, $\forall k \ge 1$,
$$
0 \le \eta_{KL}^k \le \sqrt{q} \max\left\{|u_K^k|^{\frac{q-1}2}, |u_L^k|^{\frac{q-1}2}\right\}
\le \sqrt{q} \left(|u_K^k|^{\frac{q-1}2}+ |u_L^k|^{\frac{q-1}2}\right).
$$
Using the Cauchy-Schwarz inequality in~\eqref{eq:gradPsi-L1} we get
\begin{multline*}
\left\| \grad_\Tt^n \psi(\u_\Tt^n) \right\|^2_{L^1(Q_T)} \le \| \grad_\Tt^n \phi(\u_\Tt^n) \|_{L^2(Q_T)}
\left(  \sum_{k=1}^n \dt \sum_{\s_{KL} \in \Ee} m_{KL} \left(\eta_{KL}^k\right)^2 |\x_K - \x_L| \right)\\
\le C   \sum_{k=1}^n \dt \sum_{\s_{KL} \in \Ee} m_{KL} \left( |u_K^k|^{q-1}+ |u_L^k|^{q-1}\right)  |\x_K - \x_L|  \\
\le C  \sum_{k=1}^n \dt \sum_{K \in \Tt} |u_K^k|^{q-1} \left(\sum_{L \in \Nn_K} m_{KL} |\x_K - \x_L|\right).
\end{multline*}
Using the regularity $\rho_\Tt$ of the mesh defined by~\eqref{eq:h-rho}, one obtains that there exists $C$ depending
only on $u_0$, $T$, $\O$, $q$, $d$, and $\rho_\Tt$ such that
$$
\left\| \grad_\Tt^n \psi(\u_\Tt^n) \right\|^2_{L^1(Q_T)^d} \le C \left\| \pi_\Tt^n \u_\Tt^n \right\|_{L^{q-1}(Q_T)}^{q-1}.
$$
Using~\eqref{eq:uLq+1}, one obtains that
$$
\left\| \grad_\Tt^n \psi(\u_\Tt^n) \right\|_{L^1(Q_T)^d} \le C.
$$
In order to conclude the proof of Lemma~\ref{lem:flux-L1}, it only remains to use again the discrete counterpart of~\cite[Lemma A.1]{Igb07}
to obtain that $\left\|\pi_\Tt^n \psi(\u_\Tt^n) \right\|_{L^1(Q_T)}\le C$.
\end{proof}

\subsection{Compactness of the solution}
This is the point where the main results of this paper are exploited.
Let $(\Tt_m,\Ee_m,(\x_K)_{K\in\Tt_m})_{m\ge1}$ be a sequence of admissible discretizations of $Q_T$ such that
there exists $\rho^\star>0$ satisfying
\be\label{eq:h-rho2}
\lim_{m\to \infty} h_{\Tt_m} = 0, \qquad \sup_{m\ge1}\, \rho_{\Tt_m} \le \rho^\star.
\ee
For the ease of reading, we denote by $\u_m^n \in \R^{\#\Tt_m \times (n+1)}$ instead of $\u_{\Tt_m}^n$ the discrete solution to the
scheme~\eqref{eq:uK0}, \eqref{eq:u1} and~\eqref{eq:uk} corresponding to the mesh $\Tt_m$ and the time step $\dt_n = T/n$.
Similarly, we replace the notations $\pi_{\Tt_m}$, $\pi_{\Tt_m}^n$, $\grad_{\Tt_m}$, and $\grad_{\Tt_m}^n$ by
$\pi_{m}$, $\pi_{m}^n$, $\grad_{m}$ and $\grad_{m}^n$ respectively.

For all $m\ge 1$, the functions
$$
  \u_{m} \mapsto \left\| \pi_{m}\u_{m} \right\|_{L^2(\O)} \quad \text{and} \quad
 \u_{m} \mapsto \| \u_{m} \|_{2,m}:= \left\| \pi_{m}\u_{m} \right\|_{L^2(\O)} + \left\| \grad_{m}\u_{m} \right\|_{L^2(\O)}
$$
define norms on $\R^{\#\Tt_m}$. Let us check the assumptions of \S\ref{ssec:space-discr} on the space discretization.
It follows from the so-called \emph{space-translate estimate}~\cite[Lemma 3.3]{EGH00} that
$$
\lim_{\boldsymbol \xi \to 0} \sup_{m\ge1} \sup_{\bv_{m} \in \R^{\#\Tt_m}}
\frac{\| \pi_{m} \bv_{m}(\cdot + \boldsymbol \xi) - \pi_{m} \bv_{m}\|_{L^2(\O)}}{\|\bv_{m}\|_{2,m}} = 0,
$$
so that~(${\bf A}_\x$\ref{A:compact}) holds.
Since $\pi_m$ consists in the piecewise constant reconstruction, Assumption~$({\bf A}_\x2)$ also holds.
 Finally, observe that $\u_{m} \mapsto \left\| \pi_{m}\u_{m} \right\|_{L^2(\O)}$ defines a
 euclidian norm on $\R^{\#\Tt_m}$. It is easily checked that the linear operator $\bP_{m}$ defined for $\varphi \in C^\infty_c(\O)$
by $\bP_{m}\varphi = \left(\varphi_K\right)_{K\in\Tt_m} \in \R^{\#\Tt_m}$ with
 $$
\varphi_K = \frac1{m_K} \int_K \varphi(\x) \d\x, \qquad \forall K \in \Tt_m,
 $$
  is suitable in Assumption~(${\bf A}_\x$3). Indeed, the discrete gradient of $\bP_{m} \varphi$ is given by
 $$
 \grad_{m}\bP_{m}\varphi (\x) = d \frac{(\varphi_K - \varphi_L)(\x_K - \x_L)}{|\x_K - \x_L|^2},
 \qquad \forall \x \in D_{KL}, \; \forall \s_{KL} \in \Ee_m.
 $$
 Thanks to the continuity of $\varphi$, there exists $(\widetilde \x_K)_{K\in\Tt_m}$ such that $\widetilde \x_K \in K$ and
 $\varphi_K = \varphi(\widetilde \x_K)$ for all  $K \in \Tt_m.$
 Therefore, it is easy to verify that
 $$
 \left\|  \grad_{m}\bP_{m}\varphi \right\|_{L^\infty(\O)} \le d(1+2\rho^\star) \| \grad \varphi \|_{L^\infty(\O)}, \qquad \forall m \ge 1,
 $$
so that Assumption~$({\bf A}_\x3)$ holds true.

Let $\varphi \in C^\infty_c(\O\times[0,T))$, then multiplying the scheme~\eqref{eq:u1} (resp.~\eqref{eq:uk})
by $\dt \varphi_K^{n,1}$ (resp. $\dt \varphi_K^{n,k}$ for $k \ge 2$) where
 $$
 \varphi_K^{n,k} = \int_K \varphi(\x,t_{k-1})\d\x, \qquad \forall K \in \Tt_m, \; \forall n \in \{1,\dots,n\},
 $$
 and summing over $K \in \Tt_m$ and $k \in \{1,\dots,n\}$ we get
$$
\iint_{Q_T} \h\delta_{m}^n \u_{m}^n\; \pi_{m}^n \bP_{m}^n \varphi\; \d\x\d t
	= \frac1d\iint_{Q_T} \grad_m^n \psi(\u_m^n) \cdot \grad_m^n \bP_m^n \varphi \d\x\d t,
$$
 where
$$
\h\delta_{m}^n (\bv_{m}^n)= \pi_{m}^n \left( \T_n^{-1}\h \A_n \T_n \M_n \bv_{m}^n\right), \qquad \forall \bv_m^n \in \R^{\#\Tt_m \times (n+1)},
$$
the matrices $ \T_n$, $\h \A_n$ and $\M_n$ being defined in~\S\ref{sssec:multi-step}.
 The lower triangular matrix $\A_n \in \Mm_n(\R)$ corresponding to the BDF2 method defined
 in Remark~\ref{rem:BDF2} satisfies
 \be\label{eq:An-1_norm}
{ \| (\A_n)^{-1} \|}_1 = \frac32\left(1-\frac1{3^n}\right) \le \frac32, \qquad \forall n \ge 1,
 \ee
so that the time discretization fulfills Assumption~$({\bf A}_t)$.

Finally, observe that thanks to the \emph{a priori} estimates of \S\ref{ssec:main-estimates} and in particular to Lemma~\ref{lem:flux-L1}, we find that there exists $C$ depending only on $u_0$, $\O$, $T$, $q$, $d$,
and $\rho^\star$ in \eqref{eq:h-rho2} such that
\be\label{eq:estim-dt}
\iint_{Q_T} \h\delta_{m}^n \u_{m}^n\; \pi_{m}^n \bP_{m}^n \varphi\; \d\x\d t  \le C \left\| \grad_m^n \bP_m^n \varphi \right\|_{L^\infty(Q_T)},
\quad \forall \varphi \in C^\infty_c(Q_T).
\ee

 With the above ``weak time derivative estimate''~\eqref{eq:estim-dt} 
  and the ``space derivative estimate'' of Proposition~\ref{prop:NRJ} at hand, we can apply Proposition~\ref{thm:compcomp-disc} and Theorem~\ref{thm:main}. We conclude that
 \be\label{eq:a.e.porous}
 \pi_{m}^n \u_{m}^n \underset{m,n\to\infty}\longrightarrow u \quad \text{ a.e. in } Q_T.
 \ee
 Since $\left(\pi_{m}^n \u_{m}^n\right)_{m,n}$ is uniformly bounded both in $L^\infty((0,T);L^2(\O))$
 and in $L^{q+1}(Q_T)$ it follows from Proposition~\ref{prop:NRJ} and~\eqref{eq:uLq+1} that
 \be\label{eq:equi-porous}
 \text{$\left(\pi_{m}^n \u_{m}^n\right)_{m,n}$ is equi-integrable in $L^r((0,T); L^2(\O))$ for all $r \in [1,\infty)$.}
 \ee
Applying Vitali's convergence theorem we deduce the first claim of the following statement.
 \begin{prop}\label{prop:compact}
 Let $\left(\Tt_m, \Ee_m, (\x_K)_{K\in\Tt_m}\right)_{m\ge1}$ be a sequence of admissible discretizations
 of $\O$ such that~\eqref{eq:h-rho2} holds. Let $\left(\u_m^n\right)_{m,n}$ be the corresponding sequence
 of discrete solutions to the scheme~\eqref{eq:uK0}, \eqref{eq:u1}, and~\eqref{eq:uk}, then,
 up to an unlabeled subsequence, there exists $u \in L^\infty((0,T);L^2(\O))$  such that
$$
 \pi_{m}^n \u_{m}^n \underset{m,n\to\infty}\longrightarrow u \quad \text{ strongly in } L^r((0,T);L^2(\O))
 \text{ for all } r\in [1,\infty).
 $$
Moreover, $\phi(u)$ belongs to $L^2((0,T);H^1(\Omega))$ and
  $$
 \grad_m^n \phi(\u_m^n) \underset{m,n\to\infty}\longrightarrow \grad \phi(u) \quad \text{ weakly in } L^2(Q_T)^d. $$
 \end{prop}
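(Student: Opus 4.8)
As indicated in the discussion preceding the statement, the first assertion is essentially already at hand: combining the almost-everywhere convergence~\eqref{eq:a.e.porous} (provided by Proposition~\ref{thm:compcomp-disc} and Theorem~\ref{thm:main}) with the equi-integrability~\eqref{eq:equi-porous}, Vitali's convergence theorem yields $\pi_m^n\u_m^n\to u$ strongly in $L^r((0,T);L^2(\O))$ for every $r\in[1,\infty)$, along the unlabeled subsequence already extracted. Fatou's lemma together with~\eqref{eq:uLq+1} then shows $u\in L^{q+1}(Q_T)$, hence $\phi(u)\in L^2(Q_T)$ since $|\phi(u)|\le C|u|^{(q+1)/2}$ by~\eqref{eq:phi-porous}.

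For the gradient part, I would first use the bound of Proposition~\ref{prop:NRJ} to extract, up to a further subsequence, a weak limit $\chi\in L^2(Q_T)^d$ of $\grad_m^n\phi(\u_m^n)$. I would also observe that $\pi_m^n\phi(\u_m^n)=\phi(\pi_m^n\u_m^n)$ converges to $\phi(u)$: it converges a.e.\ by~\eqref{eq:a.e.porous} and continuity of $\phi$, and it is bounded in $L^2(Q_T)$ thanks to~\eqref{eq:uLq+1}, so Vitali's theorem yields convergence in $L^s(Q_T)$ for every $s<2$, which is ample for what follows.

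The core step is to identify $\chi=\grad\phi(u)$. The plan is to exploit a discrete Green formula attached to the diamond-cell reconstruction~\eqref{eq:GTt}: for any $\boldsymbol\psi\in C^\infty_c(Q_T)^d$, a summation by parts over the interfaces of $\Tt_m$ yields
\[
\iint_{Q_T}\grad_m^n\phi(\u_m^n)\cdot\boldsymbol\psi\,\d\x\d t
= -\iint_{Q_T}\pi_m^n\phi(\u_m^n)\,\div\boldsymbol\psi\,\d\x\d t + \mathcal{R}_{m,n}(\boldsymbol\psi),
\]
the factor $d$ in~\eqref{eq:GTt} being exactly what makes this reconstruction weakly consistent; using the mesh regularity~\eqref{eq:h-rho2} and the smoothness of $\boldsymbol\psi$ one estimates $|\mathcal{R}_{m,n}(\boldsymbol\psi)|\le C(\boldsymbol\psi,\rho^\star)\,h_{\Tt_m}\|\grad_m^n\phi(\u_m^n)\|_{L^2(Q_T)}\to 0$. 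In the terminology of~\S\ref{ssec:space-discr} this is precisely the conformity relation~\eqref{eq:conformite_x} for the two-point flux scheme, and it is classical (see~\cite{EGH00,DEGH13}). Passing to the limit $m,n\to\infty$ --- the left-hand side tending to $\iint_{Q_T}\chi\cdot\boldsymbol\psi$ and the right-hand side to $-\iint_{Q_T}\phi(u)\,\div\boldsymbol\psi$ by the two convergences just recalled --- gives $\iint_{Q_T}\chi\cdot\boldsymbol\psi=-\iint_{Q_T}\phi(u)\,\div\boldsymbol\psi$ for all such $\boldsymbol\psi$, i.e.\ $\chi=\grad\phi(u)$ in $\Dd'(Q_T)$. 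Since $\chi\in L^2(Q_T)^d$ and $\phi(u)\in L^2(Q_T)$, this proves $\phi(u)\in L^2((0,T);H^1(\O))$ and $\grad_m^n\phi(\u_m^n)\rightharpoonup\grad\phi(u)$ weakly in $L^2(Q_T)^d$; as the limit is uniquely determined, the last extraction is in fact unnecessary. The only genuinely technical part is the estimate on the consistency error $\mathcal{R}_{m,n}$: it relies on the geometry of the diamond cells and on the uniform mesh-regularity bound $\rho^\star$, and while standard it is somewhat tedious --- this is the main obstacle.
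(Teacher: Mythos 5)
Your proposal is correct and follows essentially the same route as the paper: Vitali's theorem applied to the a.e.\ convergence~\eqref{eq:a.e.porous} together with the equi-integrability~\eqref{eq:equi-porous} for the first claim, and identification of the weak limit of the discrete gradients by passing to the limit in the discrete/continuous duality (Green) identities against smooth test fields, exactly as the paper indicates with reference to~\cite{EGH00,EGH10}. The only cosmetic difference is that you extract the weak limit directly in $L^2(Q_T)^d$ from Proposition~\ref{prop:NRJ} rather than first in $L^1$ via Lemma~\ref{lem:flux-L1}, which is if anything slightly cleaner.
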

 \noindent
 The last claim of Proposition~\ref{prop:compact} is classical. Indeed, it follows from 
  the first claim of the proposition that $\pi_m^n \phi(\u_m^n)\equiv \phi(\pi_m^n \u_m^n)$ converge to $\phi(u)$; in addition, due to the uniform bound of Lemma~\ref{lem:flux-L1}, the discrete gradients $\grad_m^n \phi(\u_m^n)$ converge weakly (up to a unlabelled subsequence) in $L^1(Q_T)$ to some limit $\boldsymbol{w}$. Then the weak limit $\boldsymbol{w}$  of $\Bigl(\grad_m^n \phi(\u_m^n)\Bigr)_{m,n\geq 1}$ can be identified with $\grad\phi(u)$, by passing to the limit in the duality identities that express the action of $\grad \phi(u)$ and $\grad_m^n \phi(\u_m^n)$ on test functions  (see~\cite{EGH00,EGH10} for details of this identification argument).
  Note that one can also prove that
 $$
 \| \grad_m^n \phi(\u_m^n) \|_{L^2(Q_T)^d} \underset{m,n\to\infty}\longrightarrow \sqrt{d} \| \grad \phi(u) \|_{L^2(Q_T)^d},
 $$
prohibiting the strong convergence of $\grad_m^n \phi(\u_m^n)$ towards $ \grad \phi(u)$ if $d \ge 2$.

\subsection{Identification of the limit}

The last step for proving the convergence of the scheme~\eqref{eq:uK0}, \eqref{eq:u1}, and~\eqref{eq:uk}
consists in passing to the limit in the appropriate weak formulation of the scheme, proving that
 the function $u$ exhibited in Proposition~\ref{prop:compact} is the
(unique) weak solution to  problem~\eqref{eq:porous}.

\begin{prop}\label{prop:convergence}
Let $\left(\Tt_m, \Ee_m, (\x_K)_{K\in\Tt_m}\right)_{m\ge1}$ be a sequence of admissible discretizations
 of $\O$ such that~\eqref{eq:h-rho2} holds. Let $\left(\u_m^n\right)_{m,n}$ be the corresponding sequence
 of discrete solutions to the scheme~\eqref{eq:uK0}, \eqref{eq:u1}, and~\eqref{eq:uk}, then
$$
 \pi_{m}^n \u_{m}^n \underset{m,n\to\infty}\longrightarrow u \quad \text{ strongly in } L^r((0,T);L^2(\O))
 \text{ for all } r\in [1,\infty).
 $$
  where $u$ is the unique solution to the problem~\eqref{eq:porous}.
\end{prop}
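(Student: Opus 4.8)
From Proposition~\ref{prop:compact} and the estimates established just before it we already have, along an unlabeled subsequence, $\pi_m^n\u_m^n\to u$ strongly in $L^r((0,T);L^2(\O))$ for every $r\in[1,\infty)$ and a.e.\ in $Q_T$, with $\phi(u)\in L^2((0,T);H^1(\O))$ and $\grad_m^n\phi(\u_m^n)\rightharpoonup\grad\phi(u)$ weakly in $L^2(Q_T)^d$; in particular $\pi_m^n\psi(\u_m^n)=\psi(\pi_m^n\u_m^n)\to\psi(u)=|u|^{q-1}u$ a.e.\ in $Q_T$, while $\grad_m^n\psi(\u_m^n)$ is bounded in $L^1(Q_T)^d$ by Lemma~\ref{lem:flux-L1}. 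The plan is to (i) write the discrete weak formulation of the scheme and transpose the BDF2 differencing onto the test function; (ii) pass to the limit $m,n\to\infty$ along the subsequence, identifying $u$ as \emph{a} weak solution of~\eqref{eq:porous}; (iii) invoke the known uniqueness of the weak solution to upgrade subsequential convergence to convergence of the whole family.

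For (i) I fix $\varphi\in C^\infty(\ov\O\times[0,T))$ (vanishing for $t$ near $T$ but \emph{not} required to vanish near $\p\O$, so that the limit will also capture the no-flux boundary condition), take $\bP_m^n\varphi$ as test object, multiply~\eqref{eq:u1} and~\eqref{eq:uk} by $\dt$ times the corresponding component of $\bP_m^n\varphi$, and sum over $K\in\Tt_m$ and $k$. Discrete integration by parts in space (symmetry of $\tau_{KL}$, absence of boundary edges in $\Ee_m$) turns the flux part into $\frac1d\iint_{Q_T}\grad_m^n\psi(\u_m^n)\cdot\grad_m^n\bP_m^n\varphi\,\d\x\d t$ (a sum of the usual two-point expressions $\tau_{KL}(\psi(u_K^k)-\psi(u_L^k))(\varphi^k_K-\varphi^k_L)$). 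For the accumulation part I use the splitting $\frac32u^k-2u^{k-1}+\frac12u^{k-2}=\frac32(u^k-u^{k-1})-\frac12(u^{k-1}-u^{k-2})$ and the matrix formalism of~\S\ref{sssec:multi-step}, exactly as in the proof of Lemma~\ref{lem:multi-to-semi}: choosing $\h\bphi_m^n=\bP_m^n\varphi$ converts $\iint_{Q_T}\h\delta_m^n\u_m^n\,\pi_m^n\bP_m^n\varphi$ into $\iint_{Q_T}\delta_m^n\u_m^n\,\pi_m^n(\h\A_n^{T}\bP_m^n\varphi)$ (here $\omega_m\equiv1$); an Abel summation in time — the initial-value analogue of Lemma~\ref{lem:constability} — then rewrites this as $-\int_\O\pi_m\u_m^{n,0}\,\pi_m\bP_m\varphi(\cdot,0)\,\d\x-\iint_{Q_T}\pi_m^n\u_m^n\,(\p_t\varphi)_{m,n}\,\d\x\d t$, where $\pi_m^n(\h\A_n^{T}\bP_m^n\varphi)$ is a piecewise-constant-in-time reconstruction of $\varphi$ converging uniformly to $\varphi$ on $Q_T$ and $(\p_t\varphi)_{m,n}$ is its first-order-consistent discrete time derivative, converging uniformly to $\p_t\varphi$ as $\dt_n=T/n\to0$.

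For (ii): the initial term tends to $\int_\O u_0\,\varphi(\cdot,0)\,\d\x$ because $\pi_m\u_m^{n,0}$, the $L^2$-orthogonal projection of $u_0$ onto piecewise constants (cf.~\eqref{eq:uK0}), converges to $u_0$ in $L^2(\O)$, while $\pi_m\bP_m\varphi(\cdot,0)\to\varphi(\cdot,0)$ uniformly; the accumulation term tends to $\iint_{Q_T}u\,\p_t\varphi\,\d\x\d t$ by strong $L^2(Q_T)$ convergence of $\pi_m^n\u_m^n$ against the uniformly convergent $(\p_t\varphi)_{m,n}$; and the diffusion term tends to $\iint_{Q_T}\grad\psi(u)\cdot\grad\varphi\,\d\x\d t$ by the classical weak-consistency argument for two-point flux finite volume schemes (cf.~\cite{EGH00}), which uses only the $L^1(Q_T)$ bound of Lemma~\ref{lem:flux-L1} on $\grad_m^n\psi(\u_m^n)$, the $L^1$ and a.e.\ convergence $\pi_m^n\psi(\u_m^n)\to\psi(u)$, and the membership $\psi(u)\in L^1\bigl((0,T);W^{1,1}(\O)\bigr)$ — itself following from the chain-rule bound $|\grad\psi(u)|\le C_q\,|\phi(u)|^{\frac{q-1}{q+1}}|\grad\phi(u)|$, the bounds $\phi(u)\in L^2((0,T);H^1(\O))\cap L^2(Q_T)$ and Hölder's inequality. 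Passing to the limit in the identity of (i) gives, for every such $\varphi$,
\[
\iint_{Q_T}\bigl(\grad\psi(u)\cdot\grad\varphi-u\,\p_t\varphi\bigr)\,\d\x\,\d t=\int_\O u_0\,\varphi(\cdot,0)\,\d\x,
\]
which, together with $u\in L^\infty((0,T);L^2(\O))$ and $\phi(u)\in L^2((0,T);H^1(\O))$, is precisely the weak formulation of~\eqref{eq:porous}. Since the latter has a unique weak solution (see e.g.~\cite{AL83,Otto96}), the limit $u$ does not depend on the extracted subsequence, and the usual subsequence argument promotes the convergence of Proposition~\ref{prop:compact} to convergence of the full family $(\pi_m^n\u_m^n)_{m,n}$ toward $u$, strongly in every $L^r((0,T);L^2(\O))$, $r\in[1,\infty)$ — which is the assertion.

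The main obstacle is (i): performing the discrete integration by parts in time for the \emph{multistep} BDF2 operator while correctly carrying the non-vanishing initial value of the test function, and checking that the corrected test function $\pi_m^n(\h\A_n^{T}\bP_m^n\varphi)$ and its discrete time derivative are uniformly consistent — this is exactly where the matrix machinery of~\S\ref{sssec:multi-step} and the reductions of Lemmas~\ref{lem:constability} and~\ref{lem:multi-to-semi} pay off. A secondary difficulty is the passage to the limit in the diffusion term: only an $L^1$ — not an $L^2$ — bound is available on $\grad_m^n\psi(\u_m^n)$ (the natural $L^2$ object being $\grad_m^n\phi(\u_m^n)$), which forces reliance on the two-point flux consistency machinery of~\cite{EGH00} rather than on a weak--weak product passage.
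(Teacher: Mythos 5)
Your route is, in outline, the paper's: write the discrete weak formulation, use the matrix formalism of \S\ref{sssec:multi-step} to reduce the BDF2 accumulation term to a one-step summation by parts (now with a boundary term at $t=0$ since $\varphi(\cdot,0)\neq 0$), pass to the limit term by term using Proposition~\ref{prop:compact}, Lemma~\ref{lem:flux-L1} and the classical two-point flux consistency, and invoke uniqueness to upgrade to convergence of the whole family. The one structural difference is where you place the transposition. The paper tests the scheme against the \emph{modified} vector $\h\bphi_m^n=(\h\A_n^{-1})^T\bP_m^n\varphi$, so that the accumulation term becomes exactly $\iint_{Q_T}\delta_m^n\u_m^n\,\pi_m^n\bP_m^n\varphi$ and the whole BDF2 defect is shunted into the flux part as a correction $\Cc_m^n=\frac1d\iint_{Q_T}\grad_m^n\psi(\u_m^n)\cdot\grad_m^n(\h\bphi_m^n-\bP_m^n\varphi)$, which is killed by the $L^1$ bound of Lemma~\ref{lem:flux-L1}. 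You test against $\bP_m^n\varphi$ itself and move $\h\A_n^T$ onto the test function \emph{inside the time term}.

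This is where your argument has a genuine gap. You claim that $\pi_m^n(\h\A_n^T\bP_m^n\varphi)$ converges uniformly to $\varphi$ on $Q_T$ and that its one-step difference quotient converges uniformly to $\p_t\varphi$. Both claims fail at the junction between the Euler initialization and the BDF2 steps: writing $\chi_k=(\h\A_n^T\bP_m^n\varphi)^k$, one has $\chi_k=\frac32\bphi^k-\frac12\bphi^{k+1}$ for $2\le k\le n-1$ (consistent, since $\frac32-\frac12=1$), but $\chi_1=\bphi^1-\frac12\bphi^2$ because the first column of $\A_n$ in~\eqref{eq:An-BDF2} sums to $\frac12$, not $1$. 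Hence on $(0,t_1]$ the reconstruction converges to $\frac12\varphi(\cdot,0)$, and the difference quotient $(\chi_2-\chi_1)/\dt$ blows up like $\varphi(\cdot,0)/(2\dt)$. Carrying the Abel summation through exactly, the terms you display are off by a defect $\frac12\langle \pi_m\u_m^{n,0}-\pi_m\u_m^{n,1},\varphi(\cdot,t_1)\rangle_{L^2(\O)}+O(\dt_n)$, which is paired with $O(1)$ quantities and does not vanish for free. It does vanish, but only after an extra step you do not take: use the scheme~\eqref{eq:u1} once more to rewrite it as $\frac{\dt_n}2\sum_{\s_{KL}}\tau_{KL}(\psi(u_K^1)-\psi(u_L^1))(\varphi_K-\varphi_L)$ and combine~\eqref{eq:NRJ1} with the Cauchy--Schwarz argument of Lemma~\ref{lem:flux-L1} to get $\|\grad_m\psi(\u_m^{n,1})\|_{L^1(\O)}=O(\dt_n^{-\theta})$ for some $\theta<1$, so that the defect is $o(1)$. (The paper's placement of the correction is designed precisely to avoid this: its junction defect sits in $\Cc_m^n$, where it is paired with the single-slab $L^1$ mass of $\grad_m^n\psi(\u_m^n)$.) The remaining ingredients of your proof --- the initial term, the flux term via the $W^{1,1}$ regularity of $\psi(u)$ and the consistency machinery of~\cite{EGH00}, and the uniqueness step --- agree with the paper and are fine.
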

\begin{proof}
Let $\varphi \in C^\infty_c(\ov \O \times [0,T))$ and let $m,n\ge1$, then define
$\bP_m^n \varphi = \left( \varphi_K^k \right)_{K\in\Tt_m}^{0 \le k \le n}$ by
$$
\varphi_K^k = \frac1{m_K}\int_K \varphi\left(\x, \frac{(k-1)T}n\right)\d\x, \qquad \forall K \in \Tt_m, \; \forall k \in \{1, \dots, n\}.
$$
As in the proof of Lemma~\ref{lem:multi-to-semi}, we introduce the vector
$$\h{\boldsymbol{\varphi}}_m^n = \left( \h \varphi_K^k \right)_{K \in \Tt^m}^{0 \le k \le n} = \left(\h \A_n^{-1}\right)^{T} P_m^n \varphi,$$
the matrix $\h \A_n$ being defined by~\eqref{eq:An} and~\eqref{eq:An-BDF2}.  It is straightforward to verify, thanks to the expression of $\A_n^{-1}$ and to the regularity of $\varphi$, that
\be\label{eq:conv-phi}
\| \grad_m^n \left(\h{\boldsymbol{\varphi}}_m^n - \bP_m^n \varphi\right) \|_{L^\infty(Q_T)^d}  \underset{n\to \infty}\longrightarrow 0, \quad
\text{ uniformly w.r.t. $m$}.
\ee

Multiplying the scheme~\eqref{eq:u1} by $\dt \h\varphi_K^1$ and the scheme~\eqref{eq:uk} by  $\dt \h \varphi_K^k$, then summing
over $K \in \Tt_m$ and $k \in \{1,\dots,n\}$, reorganizing the sums, we find that
\be\label{eq:ABCfinal}
\Aa_m^n(\varphi) + \Bb_m^n(\varphi) + \Cc_m^n(\varphi) = 0,
\ee
where
\begin{align*}
\Aa_m^n(\varphi) = & \iint_{Q_T} \h \delta_{m}^n \u_m^n \pi_m^n \h{\boldsymbol \varphi}_m^n \d\x \d t, \\
\Bb_m^n(\varphi) = &  \sum_{k=1}^n \dt \sum_{K\in \Tt} \psi(\u_K^k) \left(\sum_{L\in\Nn_K} \tau_{KL} (\varphi_K^k - \varphi_L^k)\right), \\
\Cc_m^n (\varphi) = & \frac1d \iint_{Q_T} \grad_m^n \psi(\u^n_m) \cdot \grad_m^n
	\left(\h{\boldsymbol{\varphi}}_m^n - \bP_m^n \varphi\right)\d\x \d t.
\end{align*}
The discrete test-function $\h{\boldsymbol \varphi}_m^n$ has been built in order to ensure that $\mathcal A_m^n(\varphi)$ can be rewritten as
\begin{equation*}
\iint_{Q_T} \delta_{m}^n \u_m^n \pi_m^n \bP_m^n \varphi \d\x \d t = - \iint_{Q_T} \pi_m^n \u_m^n \delta_m^n \bP_m^n \varphi \d\x\d t + \int_\O \pi_m \u_m^{n,0}(\x) \varphi(\x,0) \d\x.
\end{equation*}
Therefore, using classical results (see e.g.~\cite{EGH00}), we can easily check that
\be\label{eq:Afinal}
\Aa_m^n(\varphi) \underset{m,n\to\infty}\longrightarrow - \iint_{Q_T} u \p_t \varphi \d\x \d t - \int_\O u_0 \varphi(\cdot, 0) \d\x, \quad \forall
\varphi \in C_c^\infty(\ov \O \times [0,T)).
\ee
It is now well known (see e.g.~\cite{EGHN98, EGH00} or \cite{AndrBendahmaneSaad}) that
\be\label{eq:Bfinal}
\Bb_m^n(\varphi)
 \underset{m,n\to\infty}\longrightarrow \iint_{Q_T} \grad \psi(u) \cdot \grad \varphi \d\x \d t.
\ee
It follows from Lemma~\ref{lem:flux-L1} and from~\eqref{eq:conv-phi} that
\be\label{eq:Cfinal}
\Cc_m^n(\varphi)
 \underset{m,n\to\infty}\longrightarrow 0.
\ee
Putting~\eqref{eq:Afinal}--\eqref{eq:Cfinal} in~\eqref{eq:ABCfinal}, we find that $u$ is a weak solution of~\eqref{eq:porous}
Finally, as a direct by-product of the uniqueness of the limit value~\cite{Otto96},
one recovers the convergence of the whole sequence  $\pi_m^n \u_m^n$ towards $u$ in $L^2(Q_T)$.
\end{proof}

\subsection*{Acknowledgements}
This work was supported by the French National Research Agency (ANR).
More precisely, Boris Andreianov and Cl\'ement Canc\`es
acknowledge the support of the GEOPOR project (ANR-13-JS01-0007-01),
while Ayman Moussa acknowledges the support of the KIBORD project (ANR-13-BS01-0004).
Boris Andreianov thanks LJLL, Paris 6 and IRMAR, Rennes for the hospitality during the preparation of this paper.

\enlargethispage{\baselineskip}

\end{document}